\DeclareMathAlphabet{\mathpzc}{OT1}{pzc}{m}{it}
\newtheorem*{thm}{Theorem}
\newtheorem{theorem}{Theorem}
\newtheorem{lemma}{Lemma}
\newtheorem{corollary}{Corollary}
\newtheorem*{thm2'}{Theorem 2$^\prime$}
\newcommand{\Sch}{\mathcal{S}}
\newcommand{\U}{\mathcal{U}}
\newcommand{\E}{\mathcal{E}}
\newcommand{\Mf}{\mathcal{M}\tilde{f}}
\newcommand{\RR}{\mathcal{R}}
\newcommand{\z}{\zeta}
\newcommand{\dl}{\delta}
\newcommand{\sg}{\sigma}
\newcommand{\tw}{{\tilde{w}}}
\newcommand{\tf}{\tilde{f}}
\newcommand{\D}{\mathbb{D}}
\newcommand{\C}{\mathbb{C}}
\newcommand{\R}{\mathbb{R}^3}
\newcommand{\Hyp}{\mathbb{H}^3}
\newcommand{\Sph}{\mathbb{S}}
\renewcommand{\Re}{\operatorname{Re}}
\renewcommand{\Im}{\operatorname{Im}}
\title{Quasiconformal Extensions to Space of Weierstrass-Enneper Lifts}
\thanks{The authors were supported in part by FONDECYT Grant \# 1110321}
\author{M. Chuaqui \and P. Duren \and B. Osgood}
\address{P. Universidad Cat\'olica
de Chile}
\email{mchuaqui@mat.puc.cl}
\address{University of Michigan}
\email{duren@umich.edu}
\address{Stanford University}
\email{osgood@stanford.edu}
\subjclass[2000]{Primary 30C99; Secondary 31A05, 53A10}
\keywords{Harmonic mapping, Schwarzian derivative, curvature,
minimal surface}
\date{}
\dedicatory{To the memory of Professor F.W. Gehring}
\begin{document}

\maketitle

\begin{abstract}
The Ahlfors-Weill extension of a conformal mapping of the disk is generalized to the Weierstrass-Enneper lift of a harmonic mapping of the disk to a minimal surface, producing homeomorphic and quasiconformal extensions to space. The extension is defined through the family of best M\"obius approximations to the lift applied to a bundle of Euclidean circles orthogonal to the disk. Extension of the planar harmonic map is also obtained subject to additional assumptions on the dilatation. The hypotheses involve bounds on a generalized Schwarzian derivative for harmonic mappings in terms of the hyperbolic metric of the disk and the Gaussian curvature of the minimal surface. Hyperbolic convexity plays a crucial role.

\end{abstract}

\section{Introduction}

 If $f$ is an analytic, locally injective function its Schwarzian derivative is
\[
Sf = \left(\frac{f''}{f'}\right)'-\frac{1}{2}\left(\frac{f''}{f'}\right)^2.
\]
We owe to Nehari \cite{nehari:schlicht} the discovery that the size of the Schwarzian derivative of $f$ is related to its injectivity, and to Ahlfors and Weill \cite{ahlfors-weill:extension} the discovery of an  allied, stronger phenomenon of quasiconformal extension. We  state the combined results as follows:
\begin{theorem}[Nehari, Ahlfors-Weill] \label{theorem:plane-case}
Let $f$ be analytic and locally injective in the unit disk, $\D$.
\begin{list}{}{\setlength\leftmargin{.25in}}
\item[(a)] If
\begin{equation} \label{eq:nehari-2}
|Sf(z)| \le \frac{2}{(1-|z|^2)^2}, \quad z \in \D,
\end{equation}
then $f$ is injective in $\D$.
\item[(b)] If for some $\rho<1$,
\begin{equation} \label{eq:aw-2t}
|Sf(z)| \le\frac{2\rho}{(1-|z|^2)^2}, \quad z \in \D,
\end{equation}
then $f$ has a $\frac{1+\rho}{1-\rho}$-quasiconformal extension to ${\mathbb{C}}$.
\end{list}
\end{theorem}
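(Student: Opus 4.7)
For part (a), my plan is to use the classical ODE reformulation of the Schwarzian. Writing $Sf = 2p$, one has $f = u_1/u_2$ for two linearly independent solutions of $u'' + pu = 0$, and injectivity of $f$ on any subset $E \subseteq \D$ is equivalent to the disconjugacy statement that no nontrivial solution of this equation has two zeros in $E$. A M\"obius precomposition preserves both the hypothesis (the norm $\sup(1-|z|^2)^2|Sf(z)|$ is M\"obius-invariant) and the conclusion, so it suffices to show no real solution has two zeros on the diameter $(-1,1)$. For this I would invoke the model equation $v'' + (1-x^2)^{-2}v = 0$, which admits the explicit solution $v(x) = \sqrt{1-x^2}$ vanishing only at $\pm 1$. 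Sturm's comparison theorem, applied to the pointwise bound $|p(x)| \le (1-x^2)^{-2}$, then rules out two zeros of any real solution in $(-1,1)$.

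For part (b), the strategy is to construct the Ahlfors--Weill extension explicitly and estimate its Beltrami coefficient. For each $z\in\D$ let $V_z$ denote the unique M\"obius transformation matching the $2$-jet of $f$ at $z$, namely
\[
V_z(\zeta) = f(z) + \frac{f'(z)(\zeta - z)}{1 - \tfrac{1}{2}(f''(z)/f'(z))(\zeta-z)}.
\]
Define $F=f$ on $\overline{\D}$ and $F(\zeta) = V_z(\zeta)$ for $|\zeta|>1$, where $z = 1/\bar\zeta$. The two definitions agree on $|\zeta|=1$, giving continuity across the circle. Computing $F_\zeta$ and $F_{\bar\zeta}$ by the chain rule, using $z_\zeta = 0$ and $z_{\bar\zeta} = -1/\bar\zeta^2$, I would exploit the fact that $V_z$ is M\"obius ($SV_z \equiv 0$) so that the $z$-derivative of $V_z(\zeta)$ at fixed $\zeta$ collapses to a clean multiple of $Sf(z)$. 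Combining this with the substitution $\zeta - z = (|\zeta|^2-1)/\bar\zeta$ produces the identity
\[
|\mu_F(\zeta)| = \tfrac{1}{2}|Sf(z)|(1-|z|^2)^2 \le \rho,
\]
whence $F$ is quasiconformal with dilatation $(1+\rho)/(1-\rho)$. Global injectivity of $F$ on $\C$ then follows by combining the interior injectivity from (a) with local injectivity of the extension and a degree/boundary argument.

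The main obstacle in my plan is the Beltrami computation. Differentiating $V_z(\zeta)$ in $z$ involves all of $f(z)$, $f'(z)$, and $f''(z)/f'(z)$, and the cancellation down to $(Sf)(z)/2$ depends on the algebraic identity $\alpha' - \alpha^2 = \tfrac{1}{2}Sf$ with $\alpha = f''/(2f')$, together with the specific denominator structure of $V_z$. Once this identity is in hand, the conformally invariant factor $(1-|z|^2)^2$ emerges automatically from the reflection $z = 1/\bar\zeta$, and the bound drops out at once. Part (a), by comparison, is essentially a single application of Sturm's comparison theorem after the M\"obius normalization.
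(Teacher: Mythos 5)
The paper does not itself prove Theorem~\ref{theorem:plane-case}; it states it as classical background (citing Nehari and Ahlfors--Weill) and devotes the body of the paper to generalizing it, so the relevant comparison is with the classical arguments that the introduction sketches and with the analytic specialization of the paper's machinery. Your plan for part~(b) is correct and coincides with the Ahlfors--Weill argument as the paper describes it (positive Jacobian, monodromy, Beltrami estimate). The computation you flag as the main obstacle does go through exactly as you anticipate: the identity $\alpha' - \alpha^2 = \tfrac12 Sf$ with $\alpha = f''/(2f')$ is what drives the collapse, and in the paper's notation this is precisely \eqref{eq:Mf-dependence-on-zeta-1}, $\partial_\z\Mf(z,\z) = \tfrac12\tf'(\z)\Sch\tf(\z)\,m(z,\z)^2$. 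Combined with $\partial_\zeta V_z(\zeta) = f'(z)\bigl(1-\tfrac12(\zeta-z)f''(z)/f'(z)\bigr)^{-2}$, the denominator cancels against $m(\zeta,z)^2$ and one lands on $\mu_F(\zeta) = -\tfrac12 Sf(z)\,(\zeta-z)^2/\bar\zeta^{\,2}$; since $|(\zeta-z)/\bar\zeta| = 1-|z|^2$ when $z=1/\bar\zeta$, this gives $|\mu_F|\le\rho$ as claimed.

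Part~(a), however, has a genuine gap. After the M\"obius normalization, the nontrivial solution $u$ of $u''+pu=0$ that vanishes at $\pm a$ is complex-valued, because $p=\tfrac12 Sf$ is a complex analytic function even when restricted to the real diameter; it is not a ``real solution,'' and Sturm's comparison theorem in its standard form compares real equations and does not apply to it directly. The standard repair is to pass to $v=|u|$: where $u\ne0$ one checks that $v''\ge-(\Re p)\,v\ge-|p|\,v$ (this uses the Cauchy--Schwarz inequality on the cross term), and then one compares $v$ with the model solution $\sqrt{1-x^2}$ of $V''+(1-x^2)^{-2}V=0$, for example by noting that $\bigl((1-x^2)\,(v/\sqrt{1-x^2})'\bigr)'\ge0$ and using $v(\pm a)=0$ to force $v\equiv0$. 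An alternative route, closer in spirit to the paper's own methods, is to show that $((1-|z|^2)\,|f'(z)|)^{-1/2}$ is hyperbolically convex with at most one critical point --- the analytic case of Lemma~\ref{lemma:U-convex} --- which rules out $f(z_1)=f(z_2)$ without ever confronting the complex ODE. Either repair works, but as written your appeal to Sturm's theorem for ``any real solution'' does not reach the solution you actually need to control.
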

A remarkable aspect of Ahlfors and Weill's theorem is the explicit formula they give for the  extension. They need the stronger inequality \eqref{eq:aw-2t} to show, first of all, that  the extended mapping has a positive Jacobian and is hence a local homeomorphism. Global injectivity then follows from the monodromy theorem and quasiconformality from a calculation of the dilatation. The topological argument cannot get started without \eqref{eq:aw-2t}, but a different approach in \cite{co:aw} shows that the same Ahlfors-Weill formula still provides a homeomorphic extension even when $f$ satisfies the weaker inequality \eqref{eq:nehari-2} and   $f(\D)$ is a Jordan domain. As to the latter requirement, if $f$ satisfies \eqref{eq:nehari-2} then $f(\D)$ fails to be a Jordan domain only when $f(\D)$ is a parallel strip or the image of a parallel strip under a M\"obius transformation, as shown by Gehring and Pommerenke \cite{gp:nehari}.

 \smallskip

In earlier work, \cite{cdo:harmonic-schwarzian}, \cite{cdo:injective-lift}, we introduced a Schwarzian derivative for harmonic mappings in the plane and we established an injectivity criterion analogous to \eqref{eq:nehari-2} for the Weierstrass-Enneper lift of a harmonic mapping of $\D$  to a minimal surface. For  a very interesting generalization we also call attention to the important paper of D. Stowe, \cite{raz:Ahlfors-derivative}. In this paper we show that homeomorphic and quasiconformal extensions also obtain in this more general setting under a hypothesis analogous to \eqref{eq:aw-2t}.  The construction is a geometric generalization of the Ahlfors-Weill formula and extends the lift not just to the plane but to all of space.

To state our results we need some terminology and notation for harmonic mappings; we refer to \cite{duren:harmonic} for more details. Let $\D$ denote the unit disk in the complex plane and let $f \colon \D \to \mathbb{C}$ be harmonic. As is customary, we write $f=h+\bar{g}$ where $g$ and $h$ are analytic. If $|h'|+|g'| \ne 0$ and the dilatation $\omega = g'/h'$ is the square of a meromorphic function, then there is a lift $\tw=\tf(z)$ of $f$ mapping $\D$ onto a minimal surface $\Sigma$ in $\R$. The function $\tf$ is called the Weierstrass-Enneper parametrization of $\Sigma$. Its three components are themselves harmonic functions and $\tf$ is a conformal mapping of $\D$ onto $\Sigma$ with conformal metric
\[
\tf^*(|d\tw|^2)=e^{2\sigma(z)}|dz|^2, \quad  e^\sigma = |h'|+|g'|,
\]
on $\D$. Then
\[
\langle \partial_x\tf,\partial_x \tf\rangle = \langle \partial_y \tf,\partial_y\tf\rangle = e^{2\sigma}, \quad \langle \partial_x \tf,\partial_y \tf\rangle = 0, \quad z =x+iy,
\]
where $\langle\cdot,\cdot\rangle$ denotes the Euclidean inner product. The Gaussian curvature of $\Sigma$ at a point $\tf(z)$ is
\[
K(\tf(z)) = -e^{-2\sg(z)}\Delta \sg(z).
\]

As introduced in  \cite{cdo:harmonic-schwarzian}, the Schwarzian derivative of $\tf$  is
\begin{equation} \label{eq:harmonic-schwarzian}
\Sch \tf =   2(\partial_{zz}\sg-(\partial_z\sg)^2).  
\end{equation}
This becomes the familiar Schwarzian when $\tf$ is analytic and $\Sigma \subset \mathbb{C}$, where then $\sg=\log|\tf'|$.

The principal result of this paper is the following generalization of the Ahlfors-Weill theorem.

\begin{theorem} \label{theorem:extension} Let $0 \le \rho \le 1$. Suppose $\tf$ satisfies
\begin{equation} \label{eq:AW-condition}
|\Sch \tf(z)| + e^{2\sigma(z)}|K(\tf(z))| \le \frac{2\rho}{(1-|z|^2)^2},\quad z \in \D.
\end{equation}
Then $\tf$ is injective. If $\rho<1$ then $\tf$ has a  $k(\rho)$-quasiconformal extension $\E\tf$ to $\overline{\R}$. If $\rho=1$ and $\partial\Sigma$ is a Jordan curve then $\E\tf$ is a homeomorphism.
\end{theorem}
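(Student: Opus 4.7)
My plan is to model the proof on the Ahlfors--Weill construction, with the planar osculating M\"obius replaced by one acting on all of $\overline{\R}$ and the reflection point $z\mapsto 1/\bar z$ replaced by a bundle of orthogonal circles. For each $\z\in\D$ I would form the best M\"obius approximation $M_\z\colon\overline{\R}\to\overline{\R}$: the unique conformal self-map (composition of inversions) matching the position, first-order frame, and second fundamental form of $\tf$ at $\z$. The quantity $|\Sch\tf(\z)|+e^{2\sg(\z)}|K(\tf(\z))|$ appearing in \eqref{eq:AW-condition} measures, up to a constant, the norm of the third-order deviation of $\tf$ from $M_\z$, the Schwarzian controlling the tangential part and $e^{2\sg}|K|$ the normal part; this accounts for the combined left-hand side and reduces to Theorem~\ref{theorem:plane-case}(b) when $\Sigma\subset\C$ and $K\equiv 0$. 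The injectivity statement for $\tf$ itself (valid for all $\rho\le 1$) follows from the Nehari-type comparison of \cite{cdo:injective-lift} applied to this combined quantity, so the new content is the construction and analysis of $\E\tf$.

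For the geometry, to each $\z\in\D$ attach the Euclidean circle $\gamma_\z$ through $\z$ orthogonal to $\D$ and to the unit sphere $\Sph^2$---equivalently, the geodesic of $\Hyp$ (ball model) perpendicular to the equatorial disk at $\z$. The family $\{\gamma_\z\}_{\z\in\D}$ foliates $\overline{\Hyp}\setminus\partial\D$, and inversion in $\Sph^2$ extends this to a foliation of $\overline{\R}\setminus\partial\D$. I would then set
\[
\E\tf(w)=M_\z(w),\qquad w\in\gamma_\z,
\]
which restricts to $\tf$ on $\D$ since $M_\z(\z)=\tf(\z)$. The theorem reduces to three claims: (i)~the image arcs $M_\z(\gamma_\z)$ are pairwise disjoint, so $\E\tf$ is well-defined and injective; (ii)~for $\rho<1$ the dilatation of $\E\tf$ is bounded by an explicit $k(\rho)$; and (iii)~for $\rho=1$ with $\partial\Sigma$ Jordan, $\E\tf$ is at least a homeomorphism.

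Step (i) is the heart of the matter and where I expect the main obstacle. In coordinates $(\z,t)$ with $t$ parametrizing $\gamma_\z$, I would differentiate $\E\tf$ in $\z$ and express the transverse component of the derivative in terms of $dM_\z/d\z$; this component is governed by the left side of \eqref{eq:AW-condition}. The geometric mechanism, and the role of hyperbolic convexity advertised in the abstract, is that the bound forces the one-parameter family of osculating spheres carrying the fiber images to stay on one side of each $M_\z(\gamma_\z)$, ruling out collisions between fibers. For $\rho<1$ this gives a strictly positive transverse Jacobian, hence a local diffeomorphism; global injectivity then follows by monodromy in the simply connected region $\Hyp\setminus\D$. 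The dilatation comes entirely from $dM_\z/d\z$, since each $M_\z$ is conformal on $\overline{\R}$, and the calculation yields the explicit constant $k(\rho)$. Case (iii) is handled by a limiting argument on the dilates $\tf_r(z)=\tf(rz)$, $r<1$, which satisfy the strict inequality; the Jordan-curve assumption rules out the Gehring--Pommerenke parallel-strip type degeneracy and forces the limit of the $\E\tf_r$ to be a homeomorphism of $\overline{\R}$, as in \cite{co:aw}.
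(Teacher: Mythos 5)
Your overall architecture matches the paper: a bundle of Euclidean circles over $\D$ (the circle through $\z$ and $1/\bar\z$ orthogonal to $\D$, which is the same as your ``orthogonal to $\D$ and to $\Sph^2$'' description), pushed forward by the best M\"obius approximation $\Mf(\cdot,\z)$ to get a bundle over $\Sigma$, and $\E\tf$ defined fiberwise. The decisive step, as you correctly anticipate, is (i): proving the image fibers $\Mf(C_\z,\z)$ do not collide. Here you and the paper part ways, and your sketch of this step has a genuine gap.

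You propose the classical Ahlfors--Weill mechanism: show the transverse Jacobian of $\E\tf$ is strictly positive when $\rho<1$, hence local injectivity, and then use monodromy. The paper explicitly eschews this route because it cannot get started at $\rho=1$. Instead it proves disjointness \emph{directly}: it shows that a point $q$ lies on the fiber $C_{\tw}=\Mf(C_\z,\z)$ if and only if the pulled-back square root of the ``Poincar\'e metric,'' $\U(I_q\circ\tf)$, has a critical point at $\z$ (Lemma~\ref{lemma:inversion-critical-point}), and then shows via hyperbolic convexity (which it gets from Ahlfors' curve Schwarzian and a Sturm-type comparison) that $\U(T\circ\tf)$ has \emph{at most one} critical point whenever $\tf$ is injective on $\partial\D$ (Lemma~\ref{lemma:U-convex}). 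Two intersecting image fibers would produce two critical points, a contradiction. This argument works uniformly for $0\le\rho\le1$, gives injectivity of $\E\tf$ with no Jacobian or monodromy needed, and decouples the injectivity question from the dilatation estimate entirely. Your appeal to ``the one-parameter family of osculating spheres staying on one side of each fiber'' is an intuition, not an argument; to make it precise you would have to estimate $\partial_\z\Mf$ and show the transverse Jacobian is positive, which is essentially the content of the paper's Section~\ref{section:Ef-quasiconformal} lower bound \eqref{eq:DEf-lower-bound} and would need the full decomposition into tangential and normal horosphere components developed there. So the work you are deferring is not small, and it is precisely the work the critical-point argument lets you avoid for injectivity.

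Two further issues. First, even granting positive transverse Jacobian for $\rho<1$, the monodromy argument requires $\E\tf$ to be a continuous local homeomorphism \emph{across} $\partial\D$, and that requires establishing $d(\RR(\tf(\z)),\tf(\z))\to 0$ as $|\z|\to1$, i.e.\ Theorem~\ref{theorem:reflection}; your proposal does not address continuity at $\partial\D$ at all. Second, a small but real imprecision: $\Mf(\cdot,\z)$ maps $\C$ to the flat tangent plane $T_{\tw}(\Sigma)$, so it cannot match the second fundamental form of $\tf$ as you state; the correct matching condition is that the orthogonal projection of $\partial_{xx}\tf$ onto $T_{\tw}(\Sigma)$ agrees with $\partial_{xx}\Mf$. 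This distinction matters: the unmatched normal part of $\partial_{xx}\tf$ is controlled by the second fundamental form and, via \eqref{eq:alpha-and-K}, by $e^{2\sg}|K|$, and it is exactly the source of the extra normal terms $\mathbf{W}_0,\mathbf{W}_0^\perp$ in Lemma~\ref{lemma::components-of-partial-M} that spoil the clean dilatation constant $(1+\rho)/(1-\rho)$ and force the more elaborate $k(\rho)$. Your remark that ``the dilatation comes entirely from $dM_\z/d\z$'' is correct in spirit but understates how delicate the estimate is once $\Sigma$ is not flat. Your limiting argument for $\rho=1$ via dilates $\tf_r$ is reasonable and could be made to work, but it is a workaround the paper's approach makes unnecessary.
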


 That $\tf$ is injective in $\D$  was proved in \cite{cdo:injective-lift}  in even greater generality, so the point here is the extension.   It was also proved in \cite{cdo:injective-lift} that if $\tf$ satisfies \eqref{eq:AW-condition} with $\rho=1$ then $f$ and $\tf$ have spherically continuous extensions to $\partial\D$. Furthermore, we know exactly when $\partial\Sigma$ fails to be a Jordan curve in $\mathbb{R}^3$, namely when either $\tf$ is analytic and $\tf(\D)$ is the M\"{o}bius image of a parallel strip, or when  $\tf$ maps $\overline{\D}$ into a catenoid  and $\partial\Sigma$ is pinched by a Euclidean circle on the surface. In either case, there is a Euclidean circle $C$ on $\overline{\Sigma}$ and a point $p\in C$ with $\tf(\zeta_1) = p=\tf(\zeta_2)$ for a pair of points $\zeta_1,\zeta_2 \in \partial\D$. Furthermore,  equality holds in \eqref{eq:AW-condition} with $\rho=1$ along $\tf^{-1}(C \setminus\{p\})$, and because of this a function satisfying the stronger inequality with $\rho<1$ or the strict inequality with $\rho=1$ is always injective on $\partial\D$.

  It follows from properties of the Schwarzian and from Schwarz's lemma that if $\tf$ satisfies \eqref{eq:AW-condition} then so does $\tf \circ M$ for any M\"obius transformation $M$ of $\D$ onto itself. Note also that the condition trivially entails a bound on the curvature,
 \begin{equation} \label{eq:curvature-bound}
 e^{2\sigma(z)}|K(\tf(z))| \le \frac{2\rho}{(1-|z|^2)^2}.
 \end{equation}

The extension $\E\tf$  is defined in equation \eqref{eq:Ef} in Section \ref{section:convexity}. It is constructed by setting up a correspondence between two fibrations of space by Euclidean circles, one based on $\D$ and the other on $\Sigma$. Fundamental properties of these fibrations rely
on the convexity relative to the hyperbolic metric of a real-valued function, denoted $\U\tf$ and defined in Section \ref{section:convexity-U-S1f}, naturally associated with conformal mappings of $\D$ into $\R$; it is the pullback under $\tf$ of the square root of what can naturally be regarded as the Poincar\'e metric of $\Sigma$. The arguments rely on comparison theorems for differential equations. Of particular interest is the use of a Schwarzian derivative for curves introduced by Ahlfors, \cite{ahlfors:schwarzian-R^n}.

The correspondence between the two fibrations that defines $\E\tf$ is via $p \mapsto \Mf(p,\zeta)$, $p\in \R$, using the family $\Mf(p,\zeta)$ of  {best M\"{o}bius approximations} to $\tilde{f}$ parametrized by $\zeta\in \D$.  Sections \ref{section:BMA-1} and \ref{section:BMA-2} study  best M\"obius approximations in some detail and provide formulas and properties that underly  the proof of quasiconformality of $\E\tf$ in Section  \ref{section:Ef-quasiconformal}. Moreover, in Section \ref{section:BMA-1} we show that restricting the extension $\E\tf$ to $\mathbb{C}$ yields a reflection $\RR$ across $\partial\Sigma$ with a formula quite like the Ahlfors-Weill reflection. In particular, $\RR$ sews the reflected surface, $\Sigma^*=\RR(\Sigma)$, onto $\Sigma$ along the boundary. Then the topological sphere $\overline{\Sigma}\cup\Sigma^*$ is a quasisphere, being the image of $\overline{\mathbb{C}}$ under the quasiconformal mapping $\E\tf$ of $\overline{\R}$, and $\partial\Sigma$ is a spatial quasicircle, being the image of $\partial\D$.  When $f$ is analytic all aspects of the construction and the theorem reduce to the classical results, including the bound for the quasiconformality of $\E\tf$ which becomes $k(\rho) = (1+\rho)/(1-\rho)$.

Our analysis of the quasiconformality of $\E\tf$ is very much influenced by C. Epstein's insightful treatment of the classical theorems  in \cite{ epstein:reflections}, which relies on aspects of hyperbolic geometry of the upper half-space and parallel flow. However, as will be apparent, the nonzero curvature of $\Sigma$ is a considerable complication and a new approach is necessary.

As a corollary of this work, in Section \ref{section:planar-extension} we will derive a sufficient condition for quasiconformal extension
of planar harmonic mappings $f=h+\bar{g}$. This is perhaps closer to the original Ahlfors-Weill result in that we obtain simultaneously an injectivity
criterion for harmonic mappings together with a quasiconformal extension.

\begin{theorem} \label{theorem:planar-extension}
Suppose $f= h+\bar{g}$ is a locally injective harmonic mapping of $\D$ whose lift $\tf$ satisfies \eqref{eq:AW-condition} for a $\rho<1$ and whose dilatation $\omega$ satisfies
\[
\sup_{\z\in \D}\sqrt{|\omega(\z)|} < \frac{1-\sqrt{\rho}}{1+\sqrt{\rho}}, \quad \z\in \mathbb{D}.
\]
Then $f$ is injective and has a quasiconformal extension to $\mathbb{C}$ given by
\begin{equation*} \label{eq:extension-planar}
F(\z)=
\begin{cases}
&\hspace{-.15in}f(\z), \; \z\in\overline{\D} \\
&\hspace{-.15in}f(\z^*)+\displaystyle{\frac{(1-|\z^*|^2)h'(\z^*)}{\bar{\z^*}-(1-|\z^*|^2)\partial_z\sigma(\z^*)}+\frac{(1-|\z^*|^2)\overline{g'(\z^*)}}{\z^*-(1-|\z^*|^2)\partial_{\bar{z}}\sigma(\z^*)}}\, , \; \z^*=\frac{1}{\bar{\z}},\;,\z \notin\D.
\end{cases}
\end{equation*}
\end{theorem}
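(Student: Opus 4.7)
The plan is to verify directly that the given formula defines a continuous extension of $f$ to $\C$ with uniformly bounded complex dilatation, and then invoke standard quasiconformal removability of $\partial\D$ together with a monodromy-type argument to conclude that $F$ is a global homeomorphism; injectivity of $f$ follows. The formula can be viewed as the planar trace of the space extension $\E\tf$ of Theorem~\ref{theorem:extension}, but for the proof I would work with it directly, in the spirit of Ahlfors and Weill's original argument.

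For continuity across $\partial\D$: as $\zeta$ approaches $\zeta_0\in\partial\D$ from outside, $\zeta^*=1/\bar\zeta\to \zeta_0$ and $(1-|\zeta^*|^2)\to 0$. The curvature bound \eqref{eq:curvature-bound} together with the hyperbolic convexity machinery of Section~\ref{section:convexity} forces $(1-|z|^2)|\sigma_z(z)|$ to stay uniformly bounded, so the two denominators in the formula remain bounded away from $0$ for $|\zeta^*|$ near $1$, and both correction terms vanish in the limit. Combined with the spherical boundary continuity of $\tf$ from \cite{cdo:injective-lift}, this yields $F(\zeta)\to f(\zeta_0)$, so $F$ is continuous on $\C$.

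For the dilatation on the exterior: write $z=\zeta^*$, so that $z_\zeta=0$ and $z_{\bar\zeta}=-1/\bar\zeta^2$, and differentiate the three summands by the chain and quotient rules. After rearrangement, the second-order derivatives of $\sigma$ should enter only through the combinations $\sigma_{zz}-\sigma_z^2=\tfrac{1}{2}\Sch\tf$ and $\sigma_{z\bar z}=-\tfrac{1}{4}e^{2\sigma}K$, which is precisely why the left-hand side of \eqref{eq:AW-condition} is what controls the error. The outcome is a pointwise estimate
\[
|\mu_F(\zeta)| \;\le\; \Phi\bigl(\sqrt{\rho},\,\sqrt{|\omega(z)|}\bigr), \qquad z = 1/\bar\zeta,
\]
where $\Phi$ is an explicit Möbius-type expression in $\sqrt{\rho}$ and $\sqrt{|\omega|}$ whose characteristic property is that $\Phi<1$ is equivalent to $\sqrt{\rho}+\sqrt{|\omega|}+\sqrt{\rho\,|\omega|}<1$, i.e., to the hypothesis $\sqrt{|\omega|}<(1-\sqrt{\rho})/(1+\sqrt{\rho})$. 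Thus $|\mu_F|$ is uniformly less than $1$ on $\C\setminus\overline{\D}$.

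On $\D$ itself, $F=f$ is a locally injective harmonic map with dilatation $|\omega|$ bounded by $((1-\sqrt{\rho})/(1+\sqrt{\rho}))^2<1$, so $F$ is locally quasiconformal there. Continuity across $\partial\D$ and removability of the real-analytic arc $\partial\D$ for quasiconformality then imply that $F$ is globally quasiconformal on $\C$, sense-preserving on $\D$ and sense-reversing on the exterior, exactly as in the classical Ahlfors--Weill extension; in particular it is a homeomorphism, and its restriction $f$ to $\D$ is injective. The principal obstacle is the dilatation calculation: one must organize the derivatives of the two rational summands so that $\Sch\tf$ and $e^{2\sigma}K$ appear in the precise linear combination of \eqref{eq:AW-condition}, and the cross-terms between $h'$, $g'$, $\sigma_z$, and $\sigma_{\bar z}$ must assemble into the symmetric Möbius-type bound above. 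When $g'\equiv 0$ this reduces to the classical Ahlfors--Weill computation; the harmonic corrections coming from the $\overline{g'}$ term must be balanced against $\sqrt{|\omega|}$ to yield the combined estimate.
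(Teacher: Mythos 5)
Your plan is a genuinely different route from the paper's: you propose to verify quasiconformality of $F$ by a direct Ahlfors--Weill-style computation of $\mu_F$ on the exterior, whereas the paper obtains $F$ as $\Pi\circ\E\tf|_{\mathbb C}$ and \emph{imports} quasiconformality from two already-proved facts --- that the reflection $\RR$ is $k(\rho)$-quasiconformal (Corollary~\ref{corollary:R-quasiconformal}, a limiting case of the Case~II estimates) and that the dilatation hypothesis keeps the inclinations of $\Sigma$ and $\Sigma^*$ bounded away from $\pi/2$, so that the projection $\Pi$ is itself quasiconformal (Lemma~\ref{lemma:Sigma^*-locally-a-graph}). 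In the paper, local injectivity of $F$ comes from $\overline{\Sigma}\cup\Sigma^*$ being locally a graph, and global injectivity from monodromy together with the normalization that places the unique critical point of $\U\tf$ at the origin, forcing $F(z)\to\infty$ as $|z|\to\infty$. The formula is then \emph{derived} by projecting $\Mf(\zeta^*,\zeta)$ using the Weierstrass--Enneper data, not taken as the starting point.

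The gap in your proposal is the step you yourself flag as ``the principal obstacle'': you assert that differentiating the explicit formula produces a bound $|\mu_F|\le\Phi(\sqrt{\rho},\sqrt{|\omega|})$ with $\Phi<1$ exactly under the hypothesis $\sqrt{|\omega|}<(1-\sqrt{\rho})/(1+\sqrt{\rho})$, but you do not carry out the computation, and nothing in your sketch substantiates that the cross-terms between $h'$, $\overline{g'}$, $\sigma_z$, $\sigma_{\bar z}$ really collapse into a clean M\"obius-type expression controlled solely by the left side of \eqref{eq:AW-condition} and $|\omega|$. This is exactly what the paper avoids having to do directly. (For what it's worth, your equivalence $\Phi<1\iff\sqrt{\rho}+\sqrt{|\omega|}+\sqrt{\rho|\omega|}<1\iff\sqrt{|\omega|}<(1-\sqrt{\rho})/(1+\sqrt{\rho})$ is algebraically correct, so the target is right; the derivation is what's missing.) A second, smaller omission: the formula for $F$ on the exterior has a pole at the point $\zeta$ with $\zeta^*$ the critical point of $\U\tf$, so without the normalization the paper makes (placing that critical point at $0$, hence the pole at $\infty$), your ``continuous extension of $f$ to $\mathbb C$'' and the invocation of removability of $\partial\D$ and the monodromy theorem are not quite as stated; one must either normalize or work on $\overline{\mathbb C}$. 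Finally, a quasiconformal homeomorphism of $\mathbb C$ has constant orientation, so the phrase ``sense-preserving on $\D$ and sense-reversing on the exterior'' is not right as written --- the composition with $\zeta\mapsto 1/\bar\zeta$ restores orientation, as in the classical case.
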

This is essentially the Ahlfors-Weill formula applied to $h$ and $\bar{g}$ separately, and becomes the classical formula exactly when $f$ is analytic. The condition on $\omega$ makes certain that the reflected surface $\Sigma^*$ is locally a graph. This is explained in Section \ref{section:planar-extension}.

We are grateful to many people for their comments. Especially, an earlier version of this paper concentrated only on the reflection $\RR$ and a two-dimensional extension, and we were encouraged to develop the techniques presented here that give the extension to space.   Finally, we were colleagues and friends of Fred Gehring, and we respectfully dedicate this paper to his memory.

\section{Circle Bundles, Convexity, and Critical Points} \label{section:convexity}

This section introduces the central notions through which the extension $\E\tf$ of $\tf$ to space is defined: bundles of Euclidean circles orthogonal to $\D$ and to $\Sigma = \tf(\D)$, respectively, and their correspondence via the family of best M\"obius approximations to $\tf$.

As a general configuration, let $B$ be a smooth, open surface in $\R$, and consider a family $\mathfrak{C}(B)$ of Euclidean circles $C_p$, at most one of which is a Euclidean line, indexed by $p \in B$, having the geometric properties:
\begin{list}{}{\setlength\leftmargin{.35in}}
\item[$(i)$] $C_p$ is orthogonal to $B$ at $p$ and ${{C}}_p \cap \overline{B}= \{p\}$;
\item[$(ii)$]  if $p_1 \ne p_2$ then ${{C}}_{p_1}\cap {{C}}_{p_2} = \emptyset$;
\item[$(iii)$] $\bigcup_{p \in B} {{C}}_{p} = {\R} \setminus \partial B$.
\end{list}
We refer to $p\in C_p$ as the base point. If $B$ is unbounded then there is no line in $\mathfrak{C}(B)$, for a line would meet $\overline{B}$ at its base point and at the point at infinity contrary to $(i)$.

If such a configuration is possible for a given $B$, it is proper to refer to $\mathfrak{C}(B)$ as a circle bundle with base space $B$.  The model example is $B = \D$ with $C_\zeta$, $\z \in \D$,  the Euclidean circle  in $\R$ that is orthogonal to $\D$ and that passes through $\zeta$ and $\zeta^* = 1/\bar{\zeta}$. When $\z=0$ the circle is a line. Next, if $T$ is a M\"obius transformation of $\overline{\R}$ then $T(\D)$ supports such a circle bundle, and simply
\begin{equation} \label{eq:bundle-invariance-1}
\mathfrak{C}(T(\D)) = T(\mathfrak{C}(\D)).
\end{equation}

Under the assumptions of Theorem \ref{theorem:extension} one can push forward $\mathfrak{C}(\D)$ to get a circle bundle of the same type on $\Sigma$ by means of a \emph{family} of M\"obius transformations $p \mapsto \Mf(p,\zeta)$ of  $p\in\overline{\R}$,  approximating $\tf$ at each $\zeta \in \D$. They are defined as follows. Let $\tilde{w}=\tf(\zeta)$. We require first that
$\Mf(\cdot,\zeta)$ maps $\C$ to the tangent plane $T_{\tilde{w}}(\Sigma)$ to $\Sigma$ at $\tilde{w}$ with $\Mf(\zeta,\zeta) = \tilde{w}$. Next, for any smooth curve  $\psi(t)$ in $\D$ with $\psi(0)=\zeta$ we  further  require that the orthogonal projection of the curve $\tf(\psi(t))$ to $T_{\tilde{w}}(\Sigma)$ has second order contact at $\tilde{w}$ with the curve $\Mf(\psi(t),\zeta)$. This is possible for harmonic mappings and by a parameter count $\Mf(p,\zeta)$ is uniquely determined; see Section \ref{section:BMA-1}, where we will also  provide a formula for $\Mf(z,\zeta)$, for  $z \in \mathbb{C}$, that is amenable to our calculations.

Now let $\mathfrak{C}(\Sigma)$ be the family of circles 
\begin{equation} \label{eq:circle-image}
{C}_{\tilde{w} }= \Mf(C_\zeta, \z),\quad \z\in \D,\,  \tw= \tf(\z).
\end{equation}
 The main work of this section is then to prove:

\begin{theorem} \label{theorem:image-bundle}
If $\tf$ satisfies \eqref{eq:AW-condition} and is injective on $\partial\D$ then $\mathfrak{C}(\Sigma)$ satisfies properties $(i)$, $(ii)$ and $(iii)$ above.
\end{theorem}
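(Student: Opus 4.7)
Property $(i)$ is built into the definition of the best Möbius approximation. Since $\Mf(\cdot,\z)$ is a Möbius transformation of $\overline{\R}$ sending the plane containing $\D$ onto the tangent plane $T_{\tw}\Sigma$ with $\z\mapsto\tw$, it preserves circles and angles. Hence $C_{\tw}=\Mf(C_\z,\z)$ is a Euclidean circle orthogonal to $T_{\tw}\Sigma$, and therefore to $\Sigma$, at $\tw$. To rule out any further intersection with $\overline{\Sigma}$ I would argue: an interior second contact is excluded by comparing the (constant) Euclidean curvature of $C_{\tw}$ with the bound \eqref{eq:curvature-bound} on the curvature of $\Sigma$, combined with the second-order agreement of $\Mf(\psi(t),\z)$ and the projection of $\tf(\psi(t))$ to $T_{\tw}\Sigma$ at $\z$; a boundary contact $\tw'\in\partial\Sigma\cap C_{\tw}$ would pull back under $\Mf(\cdot,\z)^{-1}$ to a point of $C_\z\cap\partial\D$ distinct from the endpoints of $C_\z$, violating the injectivity of $\tf$ on $\partial\D$.

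For $(ii)$ and $(iii)$ I would exploit the equivalence $P\in C_{\tw}\Longleftrightarrow \Mf(\cdot,\z)^{-1}(P)\in C_\z$. Letting $\pi_{\D}:\R\setminus\partial\D\to\D$ denote the base-point projection of the model bundle $\mathfrak{C}(\D)$, the incidence condition reduces to the scalar fixed-point equation
\[
\Phi_P(\z) := \pi_{\D}\bigl(\Mf(\cdot,\z)^{-1}(P)\bigr) = \z, \qquad \z\in\D.
\]
Disjointness in $(ii)$ becomes the claim that $\Phi_P$ has at most one fixed point, and coverage in $(iii)$ becomes the claim that, for every $P\in\R\setminus\partial\Sigma$, at least one fixed point exists.

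Uniqueness I would obtain through the hyperbolic-convexity properties of $\U\tf$ developed in Section~\ref{section:convexity-U-S1f}. Restricting $\Phi_P$ to a hyperbolic geodesic $\gamma(t)$ joining two hypothetical fixed points, I would view $t\mapsto \Mf(\cdot,\gamma(t))^{-1}(P)$ as a curve in $\R$ and compute its Ahlfors Schwarzian for curves. The mixed bound \eqref{eq:AW-condition} then yields a Sturm-type differential inequality for the hyperbolic displacement of this curve from $\gamma$, showing that $\Phi_P(\z)=\z$ can hold at most once along $\gamma$, hence throughout $\D$. For existence I would pair a local diffeomorphism statement — the assignment $\z\mapsto C_{\tw}$ is an immersion into the space of circles, as one checks at a single base point by the inverse function theorem — with a properness argument near $\partial\D$: injectivity of $\tf$ on $\partial\D$ together with the boundary behavior of $\Mf(\cdot,\z)$ as $\z\to\partial\D$ forces $C_{\tw}$ to collapse towards $\partial\Sigma$, so the resulting sweep cannot miss any $P$ lying off $\partial\Sigma$. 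A standard degree argument then completes $(iii)$.

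The principal obstacle, I expect, is the convexity step. Converting the pointwise inequality \eqref{eq:AW-condition} — which couples $|\Sch\tf|$ with the curvature term $e^{2\sigma}|K|$ — into a genuine one-variable convexity estimate for $\U\tf$ requires carefully tracking how the image circles $C_{\tw}$ twist in $\R$ as $\z$ moves along a hyperbolic geodesic. In the classical analytic setting $K\equiv 0$ the image circles lie in a plane and the convexity is a familiar Nehari-type calculation; with $K\ne 0$ the tangent planes $T_{\tw}\Sigma$ genuinely rotate, so the normal-bundle torsion of the curve $\z\mapsto\tw$ enters the Schwarzian estimate. It is precisely at this point that Ahlfors' Schwarzian for space curves appears to be indispensable, and producing the correct differential inequality so that Sturm comparison applies is the technical heart of the argument.
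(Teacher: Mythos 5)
Your reformulation of $(ii)$ and $(iii)$ as a fixed-point problem $\Phi_P(\z)=\z$ is a genuinely different strategy from the paper's, but both it and your argument for $(i)$ contain gaps that are not just loose ends.

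For $(i)$, the claim that a boundary contact $\tw'\in\partial\Sigma\cap C_{\tw}$ would pull back under $\Mf(\cdot,\z)^{-1}$ to a point of $C_\z\cap\partial\D$, and thereby contradict injectivity of $\tf$ on $\partial\D$, rests on a misconception. The map $\Mf(\cdot,\z)$ is a best M\"obius \emph{approximation} to $\tf$ at the single point $\z$: as a M\"obius transformation it carries the plane $\mathbb{C}$ onto the tangent plane $T_{\tw}(\Sigma)$, but it does not carry $\overline{\D}$ onto $\overline{\Sigma}$. A point of $\partial\Sigma$ therefore has no reason to be the image of a point of $\partial\D$ under this particular M\"obius map, and the injectivity of $\tf$ on $\partial\D$ is never brought to bear. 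The interior-contact argument is also insufficient: second-order contact at $\tw$ and the pointwise curvature bound \eqref{eq:curvature-bound} control local geometry only; a circle can still return to meet $\overline{\Sigma}$ at a point far from $\tw$, and no local curvature comparison rules this out.

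For $(ii)$ and $(iii)$, you defer the Sturm-comparison step -- turning \eqref{eq:AW-condition} into a one-variable differential inequality showing that $\Phi_P(\z)=\z$ has at most one solution -- to the end and acknowledge it as ``the principal obstacle,'' but that is precisely the content that would need to be supplied. The paper avoids this by a single device that handles $(i)$, $(ii)$, and $(iii)$ uniformly: Lemma~\ref{lemma:inversion-critical-point} says that $q\in C_{\tw}$ if and only if $\U(I_q\circ\tf)$ has a critical point at $\tf^{-1}(\tw)$, where $I_q$ is M\"obius inversion centered at $q$. Lemma~\ref{lemma:U-convex} (hyperbolic convexity plus real analyticity, using the Ahlfors Schwarzian inside its proof) gives at most one critical point, which is exactly disjointness $(ii)$. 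Lemma~\ref{lemma:bounded-equivalences} shows that a critical point exists if and only if the image is bounded; so if $q\notin\overline{\Sigma}$ then $I_q(\Sigma)$ is bounded, hence there is a critical point, hence $q$ lies on some $C_{\tw}$, giving $(iii)$; and if $C_{\tw}$ met $\overline{\Sigma}$ at a second point $\tw'$, then $\U(I_{\tw'}\circ\tf)$ would have a critical point yet $I_{\tw'}(\Sigma)$ would be unbounded, giving $(i)$. The piece you are missing is this inversion/critical-point criterion together with the boundedness equivalence; the Ahlfors Schwarzian and Sturm-type differential inequalities do enter the paper, but packaged inside the convexity lemma rather than as a direct uniqueness argument for your fixed-point map.
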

Clearly if $T$ is a M\"obius transformation of $\overline{\R}$ then $T(\Sigma)$ also supports such a circle bundle and as in \eqref{eq:bundle-invariance-1}
\begin{equation} \label{eq:bundle-invariance-2}
\mathfrak{C}(T(\Sigma)) = T(\mathfrak{C}(\Sigma)).
\end{equation}

We now define the extension $\E \tf:\overline{\R} \to \overline{\R}$ of $\tf$ by
\begin{equation} \label{eq:Ef}
\E\tf(p) =
\begin{cases}
\Mf(p,\zeta),& \quad p \in C_\zeta,\\
\tf(p),& \quad p \in \partial\D,
\end{cases}
\end{equation}
deferring discussion of the correspondence of $\partial\D$ and $\partial\Sigma$ to the next section. Note that if $p\in \overline{\D}$ then $\E\tf(p) = \tf(p)$. The disjointness of the circles $C_{\tw}$ guarantees that $\E\tf$ is injective (when $\partial \Sigma$ is a Jordan curve) and the fact that $\R = \bigcup_{\tw \in \Sigma} C_{\tw} \cup \partial\Sigma$ guarantees that it is surjective. It is obviously a homeomorphism, even real analytic, off $\partial\D$.

\bigskip

\subsection{Hyperbolic Convexity, the Auxiliary Function $\U\tf$, and Ahlfors' Schwarzian} \label{section:convexity-U-S1f}

 The proof of Theorem \ref{theorem:image-bundle} is analytic and relies on the hyperbolic convexity and a study of the critical points of the following function. For a conformal mapping $\Phi: \D \to \R$ with conformal metric $e^{2\tau(z)}|dz|^2$ on $\D$ define
 \begin{equation} \label{eq:U}
\U\Phi(z) = \frac{1}{\sqrt{(1-|z|^2)e^{\tau(z)}}}, \quad z \in \D.
\end{equation}
From Schwarz's lemma, if $M$ is any M\"obius transformation of $\D$ onto itself then
\begin{equation} \label{eq:U-invariant}
\U(\Phi \circ M) = (\U \Phi) \circ M.
\end{equation}
We will be considering the critical points of $\U\Phi$, for various $\Phi$, and the importance of \eqref{eq:U-invariant} is that we can shift a critical point to be located at the origin, and not introduce any additional critical points.

\begin{lemma} \label{lemma:U-convex}
If $\tf$ satisfies \eqref{eq:AW-condition} and $T$ is any M\"obius  transformation of $\overline{\R}$ then $\U(T\circ \tf)$ is hyperbolically convex. If $\tf$ is injective on $\partial\D$ then $\U(T \circ \tf)$ has at most one critical point in $\D$.
\end{lemma}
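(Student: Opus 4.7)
The plan is to reduce hyperbolic convexity at any point of $\D$ to a positive semidefinite Hessian calculation at the origin, and to rule out a second critical point by forcing the equality case into one of the rigid configurations already identified in the Introduction. By the covariance \eqref{eq:U-invariant} of $\U$ together with the invariance of the left-hand side of \eqref{eq:AW-condition} under pre-composition with $\mathrm{Aut}(\D)$ (noted above) and under post-composition with M\"obius transformations of $\overline{\R}$ (a property of the harmonic Schwarzian established in \cite{cdo:harmonic-schwarzian}), we may conjugate so that the convexity test is performed at $z=0$. The Christoffel symbols of the Poincar\'e metric vanish at the origin, so the hyperbolic Hessian of $U := \U(T\circ\tf)$ at $0$ coincides with its Euclidean Hessian.

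Write $e^{2\tau(z)}|dz|^2$ for the pullback of the Euclidean metric under $T\circ\tf$, set $k = e^{2\tau(0)}K_T$ with $K_T$ the Gaussian curvature of $T(\Sigma)$ at $T\circ\tf(0)$, and put $\Sch(T\circ\tf)(0) = a+ib$. The identity $\Sch(T\circ\tf) = 2(\tau_{zz} - \tau_z^2)$ together with the Gauss equation $\Delta\tau = -e^{2\tau}K_T$ determines $\tau_{xx},\tau_{yy},\tau_{xy}$ at $0$ in terms of $a,b,k,\tau_x(0),\tau_y(0)$. Setting $v = \log U = -\tfrac12\log(1-|z|^2) - \tfrac12\tau$, a direct computation yields
\[
\frac{1}{U(0)}\,\mathrm{Hess}\,U(0) = \begin{pmatrix} 1+\tfrac{k}{4}-\tfrac{a}{2}+\tfrac{\tau_y(0)^2}{4} & \tfrac{b}{2}-\tfrac{\tau_x(0)\tau_y(0)}{4} \\ \tfrac{b}{2}-\tfrac{\tau_x(0)\tau_y(0)}{4} & 1+\tfrac{k}{4}+\tfrac{a}{2}+\tfrac{\tau_x(0)^2}{4} \end{pmatrix}.
\]
From the hypothesis $|a+ib|+|k|\le 2\rho\le 2$ at the origin, both diagonal entries exceed $1 - \tfrac{|a|}{2}-\tfrac{|k|}{4}\ge 1 - \rho\ge 0$. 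The determinant condition, after expanding and using $|\tau_z|^2 = (\tau_x^2+\tau_y^2)/4$ together with the Cauchy--Schwarz bound $\mathrm{Re}(\Sch\cdot\overline{\tau_z^2}) \le |\Sch||\tau_z|^2$, has a nonnegative $|\tau_z|^2$-contribution (coefficient $1+\tfrac{k}{4}-\tfrac{|\Sch|}{2}\ge 1-\rho$) and so reduces to $(1+\tfrac{k}{4})^2\ge(a^2+b^2)/4$; since $1+k/4\ge 1-|k|/4\ge 1/2>0$, this is equivalent to $2+\tfrac{k}{2}\ge|\Sch|$, which follows from the chain $|\Sch|\le 2-|k|\le 2+\tfrac{k}{2}$. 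Hence $\mathrm{Hess}\,U(0)$ is PSD and $U$ is hyperbolically convex.

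For the uniqueness of critical point, suppose $U$ has two distinct critical points $z_1,z_2\in\D$, and let $\gamma$ be the hyperbolic geodesic joining them. Convexity in hyperbolic arclength together with vanishing derivative at both endpoints forces $U$ to be constant on $\gamma$. Constancy means equality throughout the chain of estimates above at each point of $\gamma$; tracing these equalities shows that necessarily $\rho=1$, $K_T\equiv 0$ on $T\circ\tf(\gamma)$, and $|\Sch(T\circ\tf)|=2/(1-|z|^2)^2$ along $\gamma$. Combined with the spherically continuous boundary extension of $\tf$ furnished by \cite{cdo:injective-lift} and the rigidity analysis therein, this places $\tf$ in one of the two extremal configurations described in the Introduction---a M\"obius image of a parallel strip, or a catenoid---and in each such case the two endpoints of $\gamma$ on $\partial\D$ are sent by $\tf$ to the same point of $\overline\Sigma$, contradicting injectivity of $\tf$ on $\partial\D$.

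The main obstacle is this last rigidity step: translating the pointwise equality case of the Hessian bound into the global statement that $\tf$ parametrizes an extremal minimal surface with a pinched boundary. This classification leverages the detailed structure established in \cite{cdo:injective-lift} rather than being re-derived from scratch.
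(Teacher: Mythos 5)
Your proposal takes a genuinely different route, but it has a real gap in the convexity half.

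The gap is the claimed invariance of the left-hand side of \eqref{eq:AW-condition} under post-composition by a M\"obius transformation $T$ of $\overline{\R}$. That invariance is false. Take $\tf$ to be the identity map of $\D$ into $\mathbb{C}\subset\R$, so $\sigma\equiv 0$, $K\equiv 0$, and \eqref{eq:AW-condition} holds with $\rho=0$. Let $T=I_q$ be inversion at $q=(0,0,1)$, which carries the plane to a sphere of radius $1/2$. A direct computation gives $\Sch(T\circ\tf)=2(\tau_{zz}-\tau_z^2)=0$, $e^{2\tau(0)}=1$, and $K_T=4$, so $|\Sch(T\circ\tf)(0)|+e^{2\tau(0)}|K_T|=4>2\rho$. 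Nevertheless $\U(T\circ\tf)$ \emph{is} hyperbolically convex here: your own Hessian formula gives $\operatorname{diag}(1+\tfrac{k}{4},\,1+\tfrac{k}{4})=\operatorname{diag}(2,2)$ with $k=4>0$. This makes the deeper point clear: what matters in your matrix is the \emph{signed} quantity $k$, and positive curvature of $T(\Sigma)$ helps. For a minimal surface $K\le 0$ and the hypothesis controls $k$ from below, but after applying $T$ the image surface need not be minimal, $K_T$ can have either sign, and neither $|\Sch(T\circ\tf)|$ nor $e^{2\tau}|K_T|$ is under control. So the PSD verification you sketch, which bounds $|a|+|k|$ by $2\rho$, cannot be carried out for general $T$.

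The paper sidesteps exactly this difficulty. It works along a diameter and uses Ahlfors' curve Schwarzian $S_1$, which is exactly M\"obius invariant, $S_1(T\circ\tf)=S_1\tf$, by \eqref{eq:S1-invariant}. The decomposition \eqref{eq:S1-curvature} of $S_1\varphi$ has the curve-curvature term $\tfrac12 v^2\kappa^2\ge 0$, so it can be dropped, giving the one-sided bound \eqref{eq:v-and-S1} regardless of the geometry of $T(\Sigma)$. The comparison ODE $V''+PV=0$ then delivers hyperbolic convexity on the diameter, and the covariance \eqref{eq:U-invariant} together with pre-composition invariance of \eqref{eq:AW-condition} extends this to every geodesic. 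If you want to keep a Hessian-at-the-origin formulation you would need to identify the $S_1$-type invariant that your diagonal and determinant expressions actually encode, rather than bounding $\Sch(T\circ\tf)$ and $K_T$ separately.

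Your argument for uniqueness of the critical point is also thinner than it needs to be. Once convexity forces $\U(T\circ\tf)$ to be constant on a geodesic, the paper's proof is entirely local and self-contained: constancy on a segment propagates to the whole diameter by real analyticity; the ODE then forces $P(x)=1/(1-x^2)^2$ and, via \eqref{eq:S1-curvature} and \eqref{eq:v-and-S1}, the curvature $\kappa$ of the curve $x\mapsto(T\circ\tf)(x)$ vanishes; so $T\circ\tf$ maps $(-1,1)$ onto a Euclidean line with speed $v(x)=1/(1-x^2)$, which integrates to infinity at both ends, hence $\varphi(1)=\varphi(-1)=\infty$, contradicting injectivity on $\partial\D$. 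Your appeal to the strip/catenoid classification of \cite{cdo:injective-lift} is not needed and also not obviously the right lever: that classification concerns the global structure of $\partial\Sigma$ when it fails to be a Jordan curve, and it takes real work to deduce from local equality in your Hessian chain (along a single geodesic, for an auxiliary map $T\circ\tf$) that $\tf$ itself falls into one of those two families. The ODE argument delivers the contradiction directly.
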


To explain the terms, a real-valued function $u$ on $\D$ is {hyperbolically convex} if
\begin{equation} \label{eq:u''>0}
(u\circ \gamma)''(s) \ge 0
\end{equation}
for all hyperbolic geodesics $\gamma(s)$ in $\D$, where $s$ is the hyperbolic arclength parameter. A special case of Theorem 4 in \cite{cdo:injective-lift} tells us that when $\tf$ satisfies  \eqref{eq:AW-condition} the function
$\U \tf$ is hyperbolically convex. The principle is that an upper bound for the Schwarzian leads to a lower bound for the Hessian of $\U \tf$, and this leads to \eqref{eq:u''>0} when $\U\tf$ is restricted to a geodesic. It is important more generally, as in the present lemma,  that convexity holds for $\U(T \circ \tf)$ where $T$ is a M\"obius transformation of $\overline{\R}$. The mapping $T\circ \tf$ is typically not harmonic, but it is still conformal  and in  \eqref{eq:U} the function $e^\tau$ for $\U(T \circ \tf)$ is
 \begin{equation} \label{eq:tau}
 e^\tau = (|T'| \circ \tf)e^\sigma.
 \end{equation}

To motivate the functions $\U\tf$, and $\U(T\circ \tf)$, let
\[
\lambda_\D(z)^2|dz|^2= \frac{1}{(1-|z|^2)^2}|dz|^2
\]
be the Poincar\'e metric for $\D$ and, supposing that $\tf$ is injective,  let  $\lambda_\Sigma^2 |d\tw|^2$ be the conformal metric on $\Sigma$ with
\[
\tf^*(\lambda_\Sigma^2|d\tw|^2) = \lambda_\D^2 |dz|,
\]
so that $\tf$ is an isometry. 
Since $\tf^*(|d\tw|^2) = e^{2\sg}|dz|^2$ we have
\begin{equation} \label{eq:lambda-Sigma}
(\lambda_\Sigma \circ \tf)(z)  = \frac{1}{(1-|z|^2)e^{\sg(z)}} \quad \text{or} \quad \lambda_\Sigma \circ \tf = e^{- \sigma}\lambda_\D,
\end{equation}
and
\begin{equation} \label{eq:Poincare-Sigma}
\U{\tf} = (\lambda_\Sigma\circ \tf)^{1/2}.
\end{equation}
If $f$ is analytic and injective in $\D$ and the plane domain $\Omega =f(\D)$ replaces the minimal surface $\Sigma$, then $\lambda_\Sigma = \lambda_\Omega$ is the Poincar\'e metric for  $\Omega$. It is reasonable to consider $\lambda_\Sigma$ as the Poincar\'e metric of $\Sigma$ in the case of minimal surfaces. 
In \cite{cop:john-nehari} it was shown that the hyperbolic convexity of $\lambda_{T(\Omega)}^{1/2}$ for any M\"obius transformation $T$ is a necessary and sufficient condition for a function to  satisfy the Nehari injectivity condition \eqref{eq:nehari-2}. The first part of Lemma \ref{lemma:U-convex} is the analogous result for harmonic maps of the sufficient condition.

\bigskip

The proof of Lemma \ref{lemma:U-convex}  employs a version of the Schwarzian for curves introduced by Ahlfors in \cite{ahlfors:schwarzian-R^n}.  Let $\varphi : (a,b)\to  \R$ be of class $C^3$ with
$\varphi'(x)\neq0$. As a generalization of the real part of the analytic Schwarzian, Ahlfors defined
\begin{equation}
\label{eq:S1}
S_1\varphi = \frac{\langle \varphi''',\varphi'\rangle}{\|\varphi'\|^2}
- 3\frac{\langle \varphi'',\varphi'\rangle^2}{\|\varphi'\|^4}
+ \frac32\frac{\|\varphi''\|^2}{\|\varphi'\|^2}. 
\end{equation}
If $T$ is a M\"obius transformation of $\overline{\R}$ then
\begin{equation} \label{eq:S1-invariant}
S_1(T\circ \varphi) = S_1\varphi,
\end{equation}
a crucial invariance property.

Ahlfors' interest was in the relation of $S_1\varphi$ to the change in  cross ratio under $\varphi$, while another geometric property of $S_1\varphi$ was discovered by Chuaqui and Gevirtz in  \cite{chuaqui-gevirtz:S1}. Namely, if
 \[
 v=\|\varphi'\|
 \]
 then
 \begin{equation} \label{eq:S1-curvature}
 S_1\varphi = \left(\frac{v'}{v}\right)'-\frac{1}{2}\left(\frac{v'}{v}\right)^2 + \frac{1}{2}v^2\kappa^2,
 \end{equation}
 where $\kappa$ is the curvature of the curve $x\mapsto\varphi(x)$. We will need the connection  between $S_1$ and the Schwarzian for harmonic maps, namely
\[
S_1\tf(x) \le \Re\{\Sch f(x)\} + e^{2\sg(x)}|K(\tf(x))|, \quad -1 < x <1;
\]
see Lemma 1 in \cite{cdo:injective-lift}. Thus if $f$ satisfies \eqref{eq:AW-condition} then
\begin{equation} \label{eq:S1-bound}
S_1\tf(x) \le \frac{2
\rho}{(1-x^2)^2}, \quad -1 < x < 1.
\end{equation}

 We proceed with:

\begin{proof}[Proof of Lemma \ref{lemma:U-convex}]
To show $\U(T\circ \tf)$ is hyperbolically convex it suffices to show that $\U(T\circ \tf)''(s)\ge 0$ along the diameter $(-1,1)$.
For $x\in(-1,1)$  let $\varphi(x) = (T \circ \tf)(x)$.  From \eqref{eq:S1-invariant} and  \eqref{eq:S1-bound},
 \[
 S_1\varphi(x) = S_1 \tf(x) \le \frac{2\rho}{(1-x^2)^2}.
 \]
Next let
\begin{equation} \label{eq:v}
  v(x) = |\varphi'(x)|= e^{\tau(x)}.
\end{equation}
 From \eqref{eq:S1-curvature},
\begin{equation} \label{eq:v-and-S1}
\left(\frac{v'(x)}{v(x)}\right)'  -\frac{1}{2}\left(\frac{v'(x)}{v(x)}\right)^2 \le S_1\varphi(x)  \le \frac{2\rho}{(1-x^2)^2}. %
\end{equation}
Let $2P$ denote the left-hand side, so that
\begin{equation} \label{eq:p<=}
2P(x) \le \frac{2\rho}{(1-x^2)^2}, \quad -1 < x <1.
\end{equation}
The function
\[
V=v^{-1/2}
\]
 satisfies the differential equation
\begin{equation} \label{eq:V''}
V''+PV = 0
\end{equation}
and the function
\begin{equation} \label{eq:W}
W(x) = \frac{V(x)}{\sqrt{1-x^2}}
\end{equation}
is precisely $\U(T\circ \tf)$ restricted to $-1 <x <1$. If we give $-1 <x <1$ its hyperbolic parametrization,
\[
s=\frac{1}{2}\log\frac{1+x}{1-x}, \quad x(s) = \frac{e^{2s}-1}{e^{2s}+1}, \quad x'(s) = 1-x(s)^2,
\]
 a calculation produces
\[
\frac{d^2}{ds^2}W = \left(\frac{1}{(1-x^2)^2}-P(x)\right)(1-x^2)^2W(x), \quad x=x(s),
\]
and this is nonnegative by  \eqref{eq:p<=}.

For the second part of the lemma, suppose that $\U(T\circ\tf)$ has two critical points. Composing $\tf$ with a M\"obius transformation of $\D$ onto itself we may locate these at $0$ and $a$, $0<a<1$. By convexity they must give absolute minima of $\U(T\circ\tf)$ in $\D$, and the same must be true of $\U(T\circ\tf)(x)$ for $0\le x \le a$. Hence $\U(T\circ \tf)$ is constant on $[0,a]$ and thus constant on $(-1,1)$ because it is real analytic there.

It follows that the function $v(x) = e^{\tau(x)}$ is a constant multiple of $1/(1-x^2)^2$. But then $V(x)= v(x)^{-1/2}$ is constant multiple of $\sqrt{1-x^2}$, and from the differential equation \eqref{eq:V''} we conclude that $p(x) = 1/(1-x^2)^2$. In turn, from \eqref{eq:S1-curvature} and  \eqref{eq:v-and-S1} this forces the curvature $\kappa$ of the curve $x \mapsto (T\circ \tf)(x)$ to vanish identically. Thus $T\circ \tf$ maps the interval $(-1,1)$ onto a line with speed $|\varphi'(x)|=v(x) = 1/(1-x^2)$, and so $\varphi(1) = \varphi(-1) = \infty$. This  violates the assumption that $\tf$, hence $T\circ \tf$, is injective on $\partial\D$.
 \end{proof}

\subsection{Critical Points of $\U\tf$} \label{section:critical-points}

The crucial connection between critical points of $\U(T \circ \tf)$ and the circles in ${\mathfrak{C}}(\Sigma)$ is that inversion can be used to produce a critical point exactly when the center of inversion is on the circle. In fact, this is an analytical characterization of the circles in $\mathfrak{C}(\D)$ and in $\mathfrak{C}(\Sigma)$.

 We denote M\"obius inversion by
\begin{equation} \label{eq:I_q}
I_q(p) = \frac{p-q}{\|p-q\|^2}, \quad p \in \R, \quad \text{with} \quad \|DI_q(p)\| = \frac{1}{\|p-q\|^2}.
\end{equation}

\begin{lemma} \label{lemma:inversion-critical-point}
A point ${q} \in \R$ lies on the circle ${C}_{\tilde{w}} \in \mathfrak{C}(\Sigma)$, $\tilde{w}=\tf(\z)$,  if and only if  $\U(I_{{q}} \circ \tf)$ has a critical point at $\z$.
\end{lemma}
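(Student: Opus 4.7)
The plan is to peel off two layers of M\"obius structure and reduce to a direct computation in the model case $\tf = \mathrm{id}_\D$. Let $M = \Mf(\cdot,\zeta)$, regarded as a M\"obius transformation of $\overline{\R}$, and set $p = M^{-1}(q)$. I will show (i) the lemma for $\tf = \mathrm{id}_\D$ by direct calculation, then (ii) that the critical-point condition for $\U(I_q \circ \tf)$ at $\zeta$ depends only on the $1$-jet of $\tf$ and of $\sigma$ at $\zeta$, hence agrees with that for $\U(I_q \circ M)$, and finally (iii) that $\U(I_q \circ M)$ is a constant multiple of $\U(I_p \circ \mathrm{id}_\D)$ on $\D$, which reduces back to (i).

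For Step (i), with $\tf = \mathrm{id}_\D$ one has $e^\sigma \equiv 1$ and $\U(I_q \circ \mathrm{id}_\D)(z) = \|z - q\|/\sqrt{1-|z|^2}$. Differentiating $\log \U(I_q \circ \mathrm{id}_\D)$, the equations $\partial_x = \partial_y = 0$ at $\zeta$ first force the planar projection of $q$ to be a positive multiple $\lambda\zeta$, and then collapse to the single scalar equation $(1-\lambda)(1-\lambda|\zeta|^2) + q_3^2 = 0$. Completing the square in the vertical half-plane through $0$ and $\zeta$ identifies the locus as the Euclidean circle tangent to the normal of $\D$ at $\zeta$, centered at the midpoint of $\zeta$ and $\zeta^* = 1/\bar\zeta$ with radius $(1-|\zeta|^2)/(2|\zeta|)$, passing through $\zeta$ and $\zeta^*$ --- precisely $C_\zeta$.

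For Step (ii), for any conformal $\Phi: \D \to \R$ with conformal factor $e^\tau$ one has
\[
-2\log \U(I_q \circ \Phi)(z) = \log(1-|z|^2) + \tau(z) - 2\log\|\Phi(z) - q\|,
\]
so vanishing of its gradient at $\zeta$ depends only on $\Phi(\zeta)$, $\partial_j \Phi|_\zeta$ ($j = x,y$), and $\nabla\tau|_\zeta$. The defining second-order tangential contact of $M$ with $\tf$ at $\zeta$ gives $M(\zeta) = \tf(\zeta)$ and $\partial_j M|_\zeta = \partial_j \tf|_\zeta$, hence $\sigma_M(\zeta) = \sigma(\zeta)$; moreover $\partial_j \sigma = \langle \partial_{jj}\tf,\partial_j\tf\rangle\, e^{-2\sigma}$, and since $\partial_j \tf|_\zeta \in T_{\tw}(\Sigma)$ only the tangential part of $\partial_{jj}\tf|_\zeta$ contributes to this inner product, which by construction equals $\partial_{jj}M|_\zeta$. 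Hence $\nabla\sigma_M|_\zeta = \nabla\sigma|_\zeta$, and the critical-point conditions for $\U(I_q \circ \tf)$ and $\U(I_q \circ M)$ at $\zeta$ coincide.

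For Step (iii), the classical distortion identity $\|M(a) - M(b)\|^2 = \|DM(a)\|\,\|DM(b)\|\,\|a - b\|^2$ for M\"obius maps of $\overline{\R}$, together with the fact that $e^{\tau_M(z)} = \|DM(z)\|$ is the conformal factor of $M|_\D$, gives
\[
\U(I_q \circ M)(z) = \|M(z) - q\|\cdot \U M(z) = \sqrt{\|DM(p)\|}\cdot \frac{\|z - p\|}{\sqrt{1-|z|^2}} = \sqrt{\|DM(p)\|}\cdot \U(I_p \circ \mathrm{id}_\D)(z),
\]
a constant multiple of $\U(I_p \circ \mathrm{id}_\D)$. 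Its critical points therefore coincide with those of $\U(I_p \circ \mathrm{id}_\D)$, and by Step (i) $\zeta$ is such a critical point iff $p \in C_\zeta$ iff $q \in M(C_\zeta) = C_{\tw}$; combined with Step (ii) this is the lemma. The principal technical obstacle is Step (ii): one must see that the critical-point condition is genuinely first-order in $\tf$ at $\zeta$ (after unpacking how $\sigma$ and $\nabla\sigma$ are built from $\tf$), and that the second-order tangential contact defining $\Mf(\cdot,\zeta)$ is exactly what is needed to match this data for $\tf$ and $M$.
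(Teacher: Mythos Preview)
Your proof is correct and follows the same three-step skeleton as the paper: reduce to the identity map, pass from $\tf$ to $M=\Mf(\cdot,\zeta)$ using the contact at $\zeta$, and then strip off the M\"obius transformation. The execution differs in two places. In Step~(i) the paper only checks $\zeta=0$ directly and then invokes the invariance $\U(\Phi\circ M)=(\U\Phi)\circ M$ under disk M\"obius maps, whereas you carry out the full computation at arbitrary $\zeta$; in Step~(iii) the paper observes that $I_q\circ M$ equals an affine map composed with $I_p$ (since it sends $p\mapsto\infty$), while you use the M\"obius distortion identity to the same end. Your Step~(ii) is actually more careful than the paper's, which simply says $\tf$ and $M$ ``agree at $\zeta$ to first order'': you correctly unpack that matching $\nabla\sigma$ at $\zeta$ requires the tangential second derivatives to agree, which is exactly what the definition of $\Mf(\cdot,\zeta)$ provides.
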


\begin{proof}
The statement also applies to the bundle $\mathfrak{C}(\D)$ by taking $\tf$ to be the identity. Consider this case first, and assume further that $\z=0$. The lemma then says that $q \in C_0$, the vertical line in $\R$ through the origin, if and only if
\[
\U I_q(z) = \frac{\|z-q\|}{\sqrt{1-|z|^2}}
\]
has a critical point at $0$. This is easy to check by direct calculation. The result for $\z \in \D$ follows from this and from \eqref{eq:U-invariant} letting $M$ be a M\"obius transformation of the disk mapping $0$ to $\z$, since $M'$ does not vanish in $\D$ and the extension of $M$ to space maps $C_0$ to $C_\z$.

Now take a general $\tf$, fix $\zeta \in \D$, and let $\tilde{w} = \tf(\z)$. Observe that  $\U(I_{\tilde{q}}\circ \tf)$ has a critical point at $\z$ if and only if $\U(I_{\tilde{q}} \circ \Mf(\cdot,\z))$ has as well, because $\Mf(\cdot,\zeta)$ and $\tf$ agree at $\zeta$ to first order.  Suppose $\tilde{q} \in \tilde{C}_{\tilde{w}}$ with $\tilde{q}=\Mf(q,\z)$, $q \in C_\z$. As a M\"obius transformation of $\R$ the mapping $I_{\tilde{q}} \circ \Mf(\cdot,\z)$ sends $q$ to $\infty$, and so up to an affine transformation it \emph{is} $I_q$. But from the first part of the lemma $\U I_q$ has a critical point at $\zeta$. This proves necessity, and the argument can be reversed to prove sufficiency.
\end{proof}

Knowing how to produce a critical point, we now show what happens when there \emph{is} one (and only one).
\begin{lemma} \label{lemma:bounded-equivalences}
Let $\tf$ satisfy \eqref{eq:AW-condition} and be injective on $\partial\D$. Let $T$ be a M\"obius transformation of $\overline{\mathbb{R}^3}$. The following are equivalent:
\begin{list}{}{\setlength\leftmargin{.25in}}
\item[$(i)$]  $\U(T\circ\tf)$ has a critical point.
\item[$(ii)$]  $(T\circ \tf)(\D)$ is bounded.
\item[$(iii)$] $\U(T\circ \tf)(re^{i\theta})$ is eventually increasing along each radius $[0,e^{i\theta})$.
\item[$(iv)$]  $\U(T\circ\tf)(z) \to \infty$ as $|z|\to 1$.
\end{list}
\end{lemma}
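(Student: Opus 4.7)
The plan is to exploit the hyperbolic convexity of $u := \U(T\circ\tf)$ from Lemma \ref{lemma:U-convex}, together with the rigidity established in its proof: $u$ cannot be constant on any subsegment of a diameter (else $u$ would be constant on the entire diameter, forcing $\tf$ to fail injectivity on $\partial\D$). I will establish the cycle $(i) \Rightarrow (iii) \Rightarrow (iv) \Rightarrow (i)$, and separately the equivalence $(i) \Leftrightarrow (ii)$ via the identification $u^2 = \lambda_{T\Sigma} \circ T\tf$ of $u^2$ with the pullback of the Poincar\'e density of the surface $T\Sigma$.

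For $(i) \Rightarrow (iii)$: using \eqref{eq:U-invariant}, I relocate the unique critical point of $u$ to the origin. Hyperbolic convexity then makes $u(0)$ the absolute minimum. Along each diameter the ODE $W'' = Q W$ ($Q \ge 0$) from the proof of Lemma \ref{lemma:U-convex}, with $W'(0) = 0$, together with the rigidity clause, makes $u$ strictly convex in hyperbolic arclength. Hence $u$ is non-decreasing along every radial ray from the origin, and eventually strictly increasing once its radial derivative becomes positive.

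For $(iii) \Rightarrow (iv)$: along each diameter, $u$ is convex on $\mathbb{R}$ in hyperbolic arclength, and by $(iii)$ it is eventually increasing at each of the two ends, hence unbounded at each end; so $u \to \infty$ at both diameter endpoints. For uniformity, the function $m(r) := \min_{d_h(z,0) = r} u(z)$ is non-decreasing in $r$ by the radial monotonicity along each geodesic through $0$. If $m(r)$ stayed bounded, minimizers $z_{r_n}$ would accumulate in some limit direction $e^{i\theta^*}$; combining continuity of $u$ with hyperbolic convexity on the geodesics connecting $z_{r_n}$ to $\tanh(r_n)\,e^{i\theta^*}$ would then contradict the blow-up of $u$ along the $\theta^*$-radius.

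For $(iv) \Rightarrow (i)$: the sublevel set $\{u \le u(0)+1\}$ is compactly contained in $\D$, so $u$ attains its infimum at an interior point, which is a critical point by smoothness. For $(i) \Leftrightarrow (ii)$: if $T\Sigma$ is bounded, then by the injectivity of $\tf$ on $\partial\D$ and its continuous boundary extension from the AW condition, $T(\partial\Sigma)$ is a Jordan curve bounding $T\Sigma$; a Carath\'eodory-type argument forces $\lambda_{T\Sigma}(q) \to \infty$ uniformly as $q \to \partial(T\Sigma)$, yielding $(iv)$ and hence $(i)$. Conversely, if $T\Sigma$ is unbounded, then $T^{-1}(\infty)$ is either $\infty$ (with $\Sigma$ itself unbounded) or a point of $\overline\Sigma$, and in every case I produce a sequence $z_n$ accumulating on $\partial\D$ (or at an interior singularity of $T\tf$) along which $T\tf(z_n) \to \infty$ and $u(z_n) \to 0$, ruling out a positive minimum and negating $(i)$. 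The main difficulties I expect are the uniformity step in $(iii) \Rightarrow (iv)$, requiring delicate polar-compactness combined with hyperbolic convexity across neighbouring radii, and the Carath\'eodory-type boundary regularity for the conformal uniformization of the (possibly non-planar) surface $T\Sigma$.
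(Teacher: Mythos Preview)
Your cycle $(i)\Rightarrow(iii)\Rightarrow(iv)\Rightarrow(i)$ is sound and close to the paper's reasoning for those implications; the paper actually runs the full loop $(iv)\Rightarrow(i)\Rightarrow(ii)\Rightarrow(iii)\Rightarrow(iv)$, so condition $(ii)$ sits inside the cycle rather than being handled separately.

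The genuine gap is in your treatment of $(ii)$. For the direction ``not $(ii)\Rightarrow$ not $(i)$'' you assert that if $z_n\to\partial\D$ with $(T\circ\tf)(z_n)\to\infty$ then $u(z_n)\to 0$. This does not follow: $u(z_n)\to 0$ means $(1-|z_n|^2)e^{\tau(z_n)}\to\infty$, and the image escaping to infinity only gives $\int_0^1 e^{\tau}\,dr=\infty$, which is compatible with $(1-r)e^{\tau(r)}$ staying bounded. For the converse direction you appeal to a Carath\'eodory--Koebe principle that $\lambda_{T\Sigma}\to\infty$ at the boundary of a bounded surface; for planar domains this is the Koebe $\tfrac14$ theorem, but for a conformal immersion of $\D$ into $\R^3$ no such estimate is available off the shelf, and establishing one would be the entire content of the implication.

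The paper avoids both of these difficulties with concrete ODE arguments. For $(i)\Rightarrow(ii)$ it places the critical point at $0$, uses convexity plus the rigidity you cite to obtain \emph{linear} growth $W(x(s))\ge as+b$ in hyperbolic arclength along each radius, and then observes that this forces $e^{\tau(x)}\le -\frac{1}{a}\frac{d}{dx}\bigl(\tfrac{a}{2}\log\tfrac{1+x}{1-x}+b\bigr)^{-1}$, so $\int_0^1 e^{\tau}<\infty$ and the image of each radius has finite Euclidean length; compactness in $\theta$ gives boundedness. For $(ii)\Rightarrow(iii)$ the paper does \emph{not} use boundary behaviour of any Poincar\'e density: instead it composes with an inversion $I_q$, choosing $q$ via \eqref{eq:grad-equation} so that the resulting $W_2$ has $W_2'(0)=a>0$, hence $W_2(x(s))\ge as$ by convexity; boundedness of $(T\circ\tf)(\D)$ translates into $\|\tf_2\|\ge\delta>0$, and then $W=W_2\,\|\tf_2\|\ge a\delta s$, which forces $W$ to be eventually increasing. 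This inversion trick is the idea your plan is missing.
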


\begin{proof}
If $(iv)$ holds there is an interior minimum so $(iv) \implies (i)$ is immediate.

Suppose $(i)$ holds. We follow the notation in Lemma \ref{lemma:U-convex}. We may assume the critical point is at the origin. The value $\U(T \circ \tf)(0)$ is the absolute minimum for  $\U(T \circ \tf)$ in $\D$ and so
\[
e^{\tau(z)} \le \frac{e^{\tau(0)}}{1-|z|^2}, \quad z \in \D.
\]
Thus  $\tau$ remains finite in $\D$ and $\infty$ cannot be a point on $(T\circ \tf)(\D)$.

To show that $(T\circ \tf)(\D)$ is bounded we first work along $[0,1)$. The hyperbolically convex function $W(x) =\U(T \circ \tf)(x)$ in \eqref{eq:W} cannot be constant because $0$ is the unique critical point. Hence if $x(s)$ is the  hyperbolic arclength parametrization of $[0,1)$ with $x(0)=0$
\[
\frac{d}{ds} W(x(s)) \ge a, \quad W(x(s)) \ge a s + b,
\]
for some $a, b >0$ and all $s \ge s_0 >0$. From this
\[
\begin{aligned}
v(x) = \frac{1}{V(x)^2} & \le \frac{1}{(1-x^2)\left(\frac{a}{2}\log\frac{1+x}{1-x}+b\right)^2}\\
&= -\frac{1}{a}\frac{d}{dx}\left(\frac{1}{\frac{a}{2}\log\frac{1+x}{1-x}+b}\right).
\end{aligned}
\]
Therefore
\[
\int_0^1 e^{\tau(x)}\,dx = \int_0^1v(x)\,dx <\infty,
\]
with a bound depending only on $a$, $b$, and $s_0$, so $(T\circ \tf)(1)$ is finite.

This argument can be applied on every radius $[0,e^{i\theta})$, and by compactness the corresponding numbers $a_\theta, b_\theta, s_\theta$ can be chosen positive independent of $\theta$. This proves that $T\circ\tf$ is bounded, and hence that $(i) \implies (ii)$.

For $(ii)$ $\implies$ ${(iii)}$ we can first rotate and assume $e^{i\theta} = 1$. In the notation above, we need to show for some $x_0 >0$ that $W(x)$ is increasing for $x_0\le x <1$. 

We have to follow $T$ by an inversion, so to simplify the notation let $\tf_1= T\circ \tf$ and
$\U{\tf_1}(z) = ((1-|z|^2)e^{\tau(z)})^{-1/2}$. For a $q\in \R$ to be determined let
\[
\tilde{f_2}=I_q\circ\tilde{f_1} ,\quad
\U{\tilde{f_2}}(z) = \frac{1}{\sqrt{(1-|z|^2)e^{\nu(z)}}}, \quad \nu(z) = \tau(z) - \log|\tilde{f_1}(z)-q|^2.
\]
Let $W_2(x) = \U{\tilde{f_2}}(x)$,  $x \in (-1,1)$; again we know that $W_2(x(s))$ is convex, where $s$ is the hyperbolic arclength parameter. Now
\begin{equation} \label{eq:grad-equation}
\nabla\nu(0) = \nabla\tau(0) + \frac{2}{|q|^2}(\langle {\partial_x \tf_1}(0), q\rangle, \langle {\partial_y \tilde{f_1}}(0),q\rangle).
\end{equation}
But also 
\[
\nabla \U{\tf_2}(0) = -\frac{1}{2}e^{-\nu(0)/2}\,\nabla \nu(0),
\]
and from this equation and \eqref{eq:grad-equation} it is clear we can choose $q$ to make
\[
W_2'(0) = a>0.
\]
Convexity then ensures $W_2(x(s)) \ge a s$.

To work back to $W$, write
\begin{equation} \label{eq:f1-and-f2}
\tilde{f_1} = \frac{\tilde{f_2}}{|\tilde{f_2}|^2}+q,
\end{equation}
whence
\[
\|D\tilde{f_1}\| =\frac{\|D\tilde{f_2}\|}{\|\tilde{f_2}\|^2},
\]
and
\[
W=W_2\|\tf_2\|.
\]
The assumption we make in $(ii)$ is that $\tilde{f_1}(\D) = (T\circ \tf)(\D)$ is bounded, and \eqref{eq:f1-and-f2} thus implies that  $\|\tf_2\|\ge \delta>0$. Therefore $W(x(s)) \ge a\delta s$. By convexity, there is an $x_0>0$ so that $W(x)$ is increasing for $x_0 \le x <1$.  This completes the proof that $(ii) \implies (iii)$.

Finally, if $(iii)$ holds then for each $\theta$ there exists $0 < r_\theta <1$ such that
\[
\frac{\partial}{\partial r} \U(T\circ \tf)(r_\theta e^{i\theta}) \ge a_\theta >0.
\]
 By compactness the $r_\theta$ can be chosen bounded away from $1$  and the $a_\theta$ bounded away from $0$. By hyperbolic convexity, along the tail of each radius $\U({T \circ\tf})(r(s)e^{i\theta})$ is uniformly bounded below by a linear function of the hyperbolic arclength parameter $s$, which tends to $\infty$ as $r= r(s) \rightarrow 1$.
\end{proof}

We now have:

\begin{proof}[Proof of Theorem \ref{theorem:image-bundle}]
For $(i)$, orthogonality is obvious, and suppose ${{C}}_{\tilde{w}}$ meets $\overline{\Sigma}$ at a second point $\tilde{w}'$. Then the inversion $I_{\tilde{w}'}$, which takes $\tilde{w}'$ to $\infty$, produces a critical point for $\U(I_{\tilde{w}'} \circ \tf)$ at $\zeta = \tf^{-1}(\tilde{w})$ by Lemma \ref{lemma:inversion-critical-point}. But by Lemma \ref{lemma:bounded-equivalences}, $I_{\tilde{w}'}(\Sigma)$ is bounded.

For $(ii)$, if there is a point $\tilde{w}_3 \in {{C}}_{\tilde{w}_1}\cap \tilde{{C}}_{\tilde{w}_2}$ then the inversion $I_{\tilde{w}_3}$ produces critical points for $\U(I_{\tilde{w}_3} \circ \tf)$ at the two distinct points $\z_1=\tf^{-1}(\tilde{w}_1)$ and $\z_2=\tf^{-1}(\tilde{w}_2)$, contradicting Lemma \ref{lemma:U-convex}.

Finally for $(iii)$, by definition the base points $\tilde{w}\in \Sigma$ for the circles $C_{\tilde{w}}$ cover $\Sigma$. Suppose $q \not\in \overline{\Sigma}$. Then under inversion $I_q(\Sigma)$ is bounded. Therefore by Lemma \ref{lemma:bounded-equivalences}, $I_q \circ \tf$ has a critical point, and by Lemma \ref{lemma:inversion-critical-point} the point $q$ is on some circle ${{C}}_{\tilde{w}}$, $\tilde{w} \in \Sigma$.
\end{proof}

\subsection*{Remark} The differential equations argument in Lemma \ref{lemma:U-convex} is a version of what we have called  `relative convexity' in other work,  \cite{cdo:valence}, \cite{cdo:injective-lift}. See also the paper of Aharonov and Elias \cite{aharonov:Sturm}. The relation between critical points of the Poincar\'e metric and the Ahlfors-Weill extension was the subject of \cite{co:aw}.



\section{Best M\"obius Approximations, I: Reflection across $\partial\Sigma$ and the Ahlfors-Weill Extension} \label{section:BMA-1}

To study the extension $\E\tf$ we need an expression for the best M\"obius approximations. 
The first  condition on $\Mf$ is that $p\mapsto \Mf(p,\z)$ maps $\C$ to the tangent plane $T_{\tilde{w}}(\Sigma)$,  where $\tilde{w}=\tf(\z)=\Mf(\z,\z)$. Let $\mathbf{N}$ be a unit normal vector field along $\Sigma$. At each point $\tw \in \Sigma$ we write $\Hyp_{\tw}(\Sigma)$ for  the hyperbolic (upper) half-space over $T_{\tw}(\Sigma)$ determined by $\mathbf{N}_{\tw}$. Then $p\mapsto \Mf(p,\z)$ is an isometry of $\Hyp$ with  $\Hyp_{\tw}(\Sigma)$, but the  fact that these half-spaces vary along $\Sigma$, unlike when $\Sigma$ is planar, is at the root of the complications in our analysis.

 In appropriate coordinates on the range we can take $T_{\tilde{w}}(\Sigma)=\mathbb{C}$ and
 regard $\Mf(z,\zeta)$, $z\in \mathbb{C}$, as an ordinary complex M\"obius transformation of $\overline{\C}$. 
 With this convention, $z\mapsto \Mf(z,\zeta)$, $z=x+iy$,  depends on six real parameters, each depending on $\zeta$, and once these are determined so is $\Mf(p,\zeta)$ for $p\in\overline{\R}$.  Specifying $\Mf(\zeta,\zeta) = \tilde{w}$ fixes two of the parameters. Next, let $\psi(t)$ be a smooth curve in $\D$ with $\psi(0)=\z$.  To match $\tf$ and $\Mf$ along $\psi$ to first order at $\tilde{w}$  it suffices to have $ \partial_x \tf(\zeta) = \partial_x \Mf(\z,\z) $    because, using that  $\tf$ and $\Mf$ are conformal, the same will then be true of the $y$-derivatives. It takes two more real parameters in $\Mf$ to ensure this. We can use the final two parameters to make the orthogonal projection of $\partial_{xx}\tf(\z)$ onto $T_{\tilde{w}}(\Sigma)$ equal to  $\partial_{xx}\Mf(\z,\z)$, and because $\tf$ and $\Mf$ are harmonic (as functions of $x$ and $y$) the second $y$-derivatives also agree. Finally, a calculation using again the conformality of $\tf$ and of $\Mf$ shows that $\partial_{xy}\Mf(\zeta,\zeta)$ agrees with the tangential component of $\partial_{xy}\tf(\zeta)$.

Requiring equality of the various derivatives of $\tf$ and $\Mf$ is an alternate way of defining $\Mf$ and can be put to use to develop a formula for $\Mf(z,\z)$ for $z=x+iy \in \mathbb{C}$. We have found that the most convenient expression is
\begin{equation} \label{eq:M-formula}
\Mf(z,\z)= \tf(\z)+\Re\{m(z,\z)\}\partial_\xi\tf(\z)+ \Im\{m(z,\z)\}\partial_\eta\tf(\z), \quad \zeta = \xi + i \eta,
\end{equation}
where
\begin{equation} \label{eq:BMA-m}
m(z,\z) = \frac{z-\z}{1-\partial_\z\sigma(\z)(z-\z)}.
\end{equation}
Note the two special values
\begin{equation} \label{eq:m-values}
m(\z,\z)=0, \quad m(\z^*,\z) = \frac{1-|\z|^2}{\bar{\z}-\partial_\z\sigma(\z)(1-|\z|^2)}. 
\end{equation}
We verify that \eqref{eq:M-formula} meets the requirements in the preceding paragraph.

Immediately $\Mf(\z,\z) = \tf(\z)$, from $m(\z,\z)=0$. Next, with $\z$ fixed and $z=x+iy$ varying, differentiate the right-hand side of \eqref{eq:M-formula} with respect to $x$ and set $z = \zeta$. As
\begin{equation} \label{eq:partial_x-m}
\partial_xm(z,\z) = \frac{1}{(1-\partial_\z\sigma(\z)(z-\z))^2},
\end{equation}
we have simply $\partial_xm(z,\z)|_{z=\z} =1$, thus
\[
\partial_x\Mf(z,\z)|_{z=\z} = 1\cdot \partial_\xi\tf(\z).
\]
Taking the second $x$-derivative we first have,
\[
\partial_{xx}m(z,\z) = \frac{2\partial_\z\sigma(\z)}{(1- \partial_\z\sigma(\z)(z-\z))^3},
\]
whence
\[
\partial_{xx}m(z,\z)|_{z=\z} = 2\partial_\z\sigma(\z),
\]
and
\[
\begin{aligned}
\partial_{xx}\Mf(z,\z)|_{z=\z} & = 2\Re\{\partial_\z\sigma(\z)\}\partial_\xi\tf(\z) + 2\Im\{\partial_\z\sigma(\z)\}\partial_\eta\tf(\z)\\
&= \partial_\xi\sigma(\z)\partial_\xi\tf(\z) -\partial_\eta\sigma(\z)\partial_\eta\tf(\z).
\end{aligned}
\]
Next, projecting onto $\partial_\xi\tf(\z)$ gives
\[
\langle\partial_{xx}\Mf(z,\z)|_{z=\z}, \partial_\xi\tf(\z) \rangle = \partial_\xi\sigma(\z)\langle \partial_\xi\tf(\z), \partial_\xi\tf(\z) \rangle = e^{2\sigma(\z)}\partial_\xi\sigma(\z).
\]
On the other hand, projecting $\partial_{\xi\xi}\tf(\z)$ onto $\partial_\xi\tf(\z)$ we get
\[
\langle \partial_{\xi\xi}\tf(\z), \partial_\xi\tf(\z) \rangle = \left.\frac{1}{2} \partial_\xi\langle \partial_\xi\tf, \partial_x\tf)\rangle\right|_{\z} = \frac{1}{2} \partial_\xi(e^{2\sigma})(\z) = e^{2\sigma(\z)}\partial_\xi\sigma(\z),
\]
as we should. Similarly, using
\[
\langle \partial_{\xi\xi}\tf(\z),\partial_\eta\tf(\z)\rangle = -\langle \partial_{\eta\eta}\tf(\z),\partial_\eta\tf(\z)\rangle = \left.-\frac{1}{2}\partial_\eta\langle \partial_\eta\tf,\partial_\eta\tf\rangle\right|_{\z}=-e^{2\sigma(\z)}\partial_\eta\sg(\z)
\]
we again get the correct equality. This completes the verification of \eqref{eq:M-formula}.

As a point of reference we want to see the form that \eqref{eq:M-formula} takes when $\tf$ is analytic.  In that case
\[
\sigma(\z) = \log|\tf'(\z)|, \quad \text{and} \quad \partial_\z \sigma(\z) = \frac{1}{2}\frac{\tf''(\z)}{\tf'(\z)},
\]
and
\begin{equation} \label{eq:m-analytic}
m(z,\z)= \frac{z-\z}{1-\frac{1}{2}(z-\z)\displaystyle{\frac{\tf''(\z)}{\tf'(\z)}}}.
\end{equation}
Then using the Cauchy-Riemann equations,
\begin{equation}  \label{eq:Mf-analytic}
\Mf(z,\z) = \tf(\z) + m(z,\z)\tf'(\z).
\end{equation}
For derivatives of $\Mf(z,\z)$ with respect to $z$ we obtain:
\[
 \partial_z\Mf(z,\z) = \frac{\tf'(\z)}{\left(1-\frac{1}{2}(z-\z)\displaystyle{\frac{\tf''(\z)}{\tf'(\z)}}\right)^2},
\]
\[
\partial_{zz}\Mf(z,\z) = \frac{\tf''(\z)}{\left(1-\frac{1}{2}(z-\z)\displaystyle{\frac{\tf''(\z)}{\tf'(\z)}}\right)^3},
\]
showing  second order contact between $\tf$ and $\Mf$ when $z=\z$.

\subsection{Reflection Across $\partial\Sigma$} \label{subsection:reflection}

The existence of the bundle $\mathfrak{C}(\Sigma)$ allows us to define a reflection of $\Sigma$ across its boundary. If $\tw \in \Sigma$ the circle $C_{\tw}$ intersects the tangent plane $T_{\tw}(\Sigma)$ orthogonally at a diametrically opposite point $\tw^*$ outside $\overline{\Sigma}$, and  we write
\begin{equation} \label{eq:reflection-1}
\tw^* = \RR(\tw),
\end{equation}
for this correspondence.  Equivalently, if $\tw = \tf(\z) =\Mf(\z,\z)$ then
\begin{equation} \label{eq:reflection-BMA}
\tw^* = \Mf(\z^*,\z)=\E\tf(\z^*), \quad \z^*=1/\bar{\z}.
\end{equation}
In this section we will show that $\RR$ fixes $\partial\Sigma$ pointwise.

From $\tw^*=\Mf(\z^*,\z)$ and $m(\z,\z)=0$ we obtain
\begin{equation} \label{eq:w^*-w}
\tw^* -\tw
= \Re\{m(\z^*,\z)\}\partial_x\tf(\z) + \Im\{m(\z^*,\z)\}\partial_y\tf(\z) .
\end{equation}
Moreover, from \eqref{eq:U} we have
\begin{equation} \label{eq:U_z}
\partial_z\log\U\tf(\zeta) = \frac{\bar{\z} - \partial_z\sigma(\z)(1-|\z|^2)}{2(1-|\z|^2)} = \frac{1}{2m(\z^*,\z)},
\end{equation}
and so we also obtain
\begin{equation} \label{eq:C-diameter}
\|\tw^*-\tw\| = \frac{e^{\sg(\zeta)}}{\|\nabla \log \U\tf(\zeta)\|}.
\end{equation}
This is the length of the diameter of ${C}_{\tw}$ and we want to see that it tends to $0$ as $\tw$ approaches $\partial\Sigma$. We formulate the result as:  
\begin{theorem} \label{theorem:reflection}
Let $d$ denote the spherical metric on $\overline{\R}$. If $\tf$ satisfies \eqref{eq:AW-condition} and is injective on $\partial\D$ then
\[
d(\RR(\tf(\z)),\tf(\z)) \rightarrow 0 \quad \text{as} \quad |\z| \rightarrow 1.
\]
\end{theorem}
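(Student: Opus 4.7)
The plan is to combine the diameter formula~\eqref{eq:C-diameter}, $\|\tw^*-\tw\|=e^{\sigma(\z)}/\|\nabla\log\U\tf(\z)\|$, with the hyperbolic convexity and growth properties of $\U\tf$ composed with a suitable M\"obius transformation, after a reduction to the case of a bounded surface. Because the theorem concerns the spherical metric, one cannot expect $\|\tw^*-\tw\|$ to tend to zero in general---when $\tw$ approaches $\partial\Sigma$ through the point at infinity, the Euclidean diameter of $C_\tw$ need not shrink---so the M\"obius reduction is essential.

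Choose $q\in\R\setminus\overline{\Sigma}$, which is possible since $\overline{\Sigma}$ has empty interior in $\overline{\R}$, and let $T=I_q$. Then $\Sigma'=T(\Sigma)$ is bounded in $\R$, and $T$ is bi-Lipschitz on $\overline{\R}$ in the spherical metric with constant depending only on $q$. It therefore suffices to prove that $\|T(\tw^*)-T(\tw)\|\to 0$ as $|\z|\to 1$: the images $T(\tw)\in\Sigma'$ and (eventually) $T(\tw^*)$ then lie in a bounded region of $\R$ where spherical and Euclidean metrics are comparable. By Lemma~\ref{lemma:U-convex}, the function $v:=\U(T\circ\tf)=\|\tf-q\|\,\U\tf$ is hyperbolically convex on $\D$; by Lemma~\ref{lemma:bounded-equivalences}, because $\Sigma'$ is bounded, $v$ has a critical point and $v(z)\to\infty$ as $|z|\to 1$, uniformly in radial direction by the compactness argument in the proof of that lemma. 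Hence $v\cdot\|\nabla v\|_{\mathrm{hyp}}\to\infty$ uniformly. The derivation of \eqref{eq:C-diameter} uses only conformality of $\tf$, not harmonicity, so applied to the conformal map $T\circ\tf$ with conformal factor $e^\tau$ and formal best M\"obius approximation $\Mf_T$ defined by \eqref{eq:M-formula} with $\tau$ in place of $\sigma$, it gives
\[
\|\Mf_T(\z^*,\z)-T(\tw)\|=\frac{1}{v(\z)\,\|\nabla v(\z)\|_{\mathrm{hyp}}}\longrightarrow 0
\]
uniformly as $|\z|\to 1$.

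To finish one compares $T(\tw^*)=(T\circ\Mf)(\z^*,\z)$ with $\Mf_T(\z^*,\z)$. Both $T\circ\Mf(\cdot,\z)$ and $\Mf_T(\cdot,\z)$ are M\"obius transformations of $\overline{\R}$ that agree with $T\circ\tf$ to first order at $\z$, but they send $\D$ respectively to the tangent sphere $T(T_\tw(\Sigma))$ and the tangent plane $T_{T(\tw)}(\Sigma')$. They therefore differ by a M\"obius $R$ fixing $T(\tw)$ with $DR(T(\tw))=\mathrm{id}$, so that $T(\tw^*)=R(\Mf_T(\z^*,\z))$ and $R(a)-a=O(\|a-T(\tw)\|^2)$ near $T(\tw)$. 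Combining yields
\[
\|T(\tw^*)-T(\tw)\|\le\|\Mf_T(\z^*,\z)-T(\tw)\|+O(\|\Mf_T(\z^*,\z)-T(\tw)\|^2)\longrightarrow 0,
\]
and the bi-Lipschitz property of $T$ then gives $d(\tw^*,\tw)\to 0$. I anticipate the main technical obstacle to be a uniform control on the implied constant in the second-order expansion $R(a)-a=O(\|a-T(\tw)\|^2)$ as $\z$ runs over a neighborhood of $\partial\D$: this constant is governed by the curvature of the tangent sphere $T(T_\tw(\Sigma))$, and one must show that this curvature does not blow up too fast relative to $\|\Mf_T(\z^*,\z)-T(\tw)\|$.
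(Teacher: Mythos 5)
Your reduction to a bounded surface and the diameter computation via $\U(T\circ\tf)$ match the paper's Case 2 exactly, and the algebra ($v=\|\tf-q\|\,\U\tf$, the identity $\|\Mf_T(\z^*,\z)-T(\tw)\|=1/(v\,\|\nabla v\|_{\mathrm{hyp}})$, the uniform lower bound on $\|\nabla v\|_{\mathrm{hyp}}$ from the compactness argument in Lemma~\ref{lemma:bounded-equivalences}) is correct. The difficulty is entirely in your final comparison step, and the gap you flag at the end is genuine.

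You try to pass from $\Mf_T(\z^*,\z)=\RR'(T(\tw))$ to $T(\tw^*)=(T\circ\Mf)(\z^*,\z)$ by factoring out a M\"obius $R$ with $R(T(\tw))=T(\tw)$, $DR(T(\tw))=\mathrm{id}$, and then invoking a second-order error $R(a)-a=O(\|a-T(\tw)\|^2)$. The implied constant is the curvature of the tangent sphere $T(T_\tw(\Sigma))$, which is (up to a factor of $2$) the distance from $q$ to the Euclidean plane $T_\tw(\Sigma)$; as $\z\to\partial\D$ with $\Sigma$ unbounded, this distance can blow up, and you would have to show the blow-up is compensated by the decay of $\|\Mf_T(\z^*,\z)-T(\tw)\|^2$. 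That trade-off is plausible but it is precisely the estimate you have not supplied, so as written the proof is incomplete.

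The paper avoids this entirely by using the circle bundle invariance $\mathfrak{C}(T(\Sigma))=T(\mathfrak{C}(\Sigma))$, equation \eqref{eq:bundle-invariance-2}: since $\tw^*\in C_\tw$, one has $T(\tw^*)\in T(C_\tw)=C_{T(\tw)}$, and $T(\tw)$ is the base point of that same circle. The quantity $\|\RR'(T(\tw))-T(\tw)\|$ you have already shown tends to zero \emph{is} the Euclidean diameter of $C_{T(\tw)}$, so $\|T(\tw^*)-T(\tw)\|\le\mathrm{diam}\,C_{T(\tw)}\to 0$ with no need to relate $T(\tw^*)$ and $\RR'(T(\tw))$ at all, let alone estimate a second-order M\"obius error. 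You should replace your last paragraph with this observation; the rest of your argument then closes. (A minor structural difference: the paper splits into the case where $\U\tf$ already has a critical point, with no inversion needed, and the case where it does not; your choice of a $q\notin\overline{\Sigma}$ handles both at once, which is fine.)
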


\begin{proof}
 We divide the proof into the cases when $\U\tf$ has one critical point and when it has none. We work in the spherical metric because, first, $\tf$ has a spherically continuous extension to $\partial\D$ (by \cite{cdo:injective-lift}), and second, when $\U\tf$ has no critical points we have to allow for shifting $\tf$ by a M\"obius transformation.

Suppose $\U\tf$ has a unique critical point, which, by \eqref{eq:U-invariant}, we can take to be at $0$. The proof of Lemma~\ref{lemma:bounded-equivalences} shows that there is an $a >0$ such that along any radius $[0, e^{i\theta})$
\[
(1-r^2)\frac{\partial}{\partial r} \U\tf(re^{i\theta}) \ge a
\]
for all $r \ge r_0>0$. (This corresponds to $dW/ds \ge a$ in the proof of Lemma~\ref{lemma:bounded-equivalences}, where $s$ is the hyperbolic arclength parameter.) It follows that
\begin{equation} \label{eq:grad-u-lower-bound}
(1-|\z|^2)\|\nabla \U\tf(\z)\| \ge a >0,
\end{equation}
for all $|\z| \ge r_0 >0$.

From \eqref{eq:C-diameter}
\[
\begin{aligned}
|\RR(\tf(\z)) - \tf(\z)| &= \frac{e^{\sg(\z)}}{\| \nabla \log \U\tf(\z)\|}
=\frac{\U\tf(\z)e^{\sg(\z)}}{\|\nabla \U\tf(\z)\|}\\
&= \frac{1}{\U\tf(\z)}\frac{1}{(1-|\z|^2)\|\nabla \U\tf(\z)\|}.
\end{aligned}
\]
This tends to $0$ as $|\z|\rightarrow 1$ because $\U\tf$ becomes infinite (Lemma \ref{lemma:bounded-equivalences}) and $(1-|\z|^2)\|\nabla \U\tf(\z)\|$ stays bounded below. 

Next, supposing that $\U\tf$ has no critical point, we produce one. That is, let $T$ be a M\"obius  transformation so that $\U{(T \circ \tf)}$ has a critical point at $0$. The preceding argument can be repeated verbatim to conclude that
\begin{equation} \label{eq:diameter-shrinks}
\|\RR'(T(\tf(\z))) - T(\tf(\z))\|\rightarrow 0 \quad \text{as} \quad  |\z| \rightarrow 1,
\end{equation}
where $\RR'$ is the reflection for the surface $\Sigma'=T(\Sigma)$. If the reflections were conformally natural, if we knew that $\RR' \circ T= T \circ \RR$, then  we would be done.  Instead, we argue as follows.

Let $\z\in \D$, $\z \ne 0$. The number $\|\RR'(T(\tf(\z))) - T(\tf(\z))\|$ is the length of the diameter of the circle $C_{T(\tf(\z))}$ based at $T(\tf(\z))$  that defines the reflection $\RR'$, and it tends to $0$ by \eqref{eq:diameter-shrinks}. But now, if $C_{\tf(\z)}$ is the circle based at $\tf(\z)$, for the surface $\Sigma$  then  $\RR(\tf(\z))$ is  on  $C_{\tf(\z)}$ (diametrically opposite to $\tf(\z)$), and then $T(\RR(\tf(\z))) \in C_{T(\tf(\z))}$. Therefore $\|T(\RR({\tf}(\z))) - T(\tf(\z))\|\rightarrow 0$ as $|\z|\rightarrow 1$, whence in the spherical metric $d(\RR({\tf}(\z)), \tf(\z))$ 
tends to $0$ as well and the proof is complete.
\end{proof}

\subsection*{Remark} Theorem \ref{theorem:reflection} shows that $\RR$ is indeed a reflection across $\partial\Sigma$, and that the extension $\E\tf$ is continuous at $\partial\D$. In Section \ref{subsection:reflection-qc} we will show that $\RR$ is quasiconformal, a property needed for the proof of Theorem \ref{theorem:planar-extension} on the quasiconformal extension of the \emph{planar} harmonic mapping $f=h+\bar{g}$. This separate fact is not necessary for the proof of Theorem \ref{theorem:extension}, but it follows from limiting cases of the estimates in Section \ref{subsection:dilatation}.

 Let $\Sigma^*=\RR(\Sigma)$. Then $\Sigma \cup \partial\Sigma \cup \Sigma^*$ is a topological sphere that is the image of $\C$ by the quasiconformal mapping $\E\tf$ of $\overline{\R}$, in other words it is a quasisphere.  Or,  one might also regard $\Sigma$ as a (nonplanar) quasidisk, and given the many analytic and geometric characterizations of planar quasidisks (see for example Gehring's survey \cite{gehring:quasidisks}) one might ask if any  have analogues for $\Sigma$. Some results in this direction are due to W. Sierra, who  has shown in \cite{sierra:harmonic-john} that if $\tf$ satisfies \eqref{eq:AW-condition} then $\Sigma$ is a John domain (a John surface) in its metric geometry, and if $\tf$ is also bounded then $\Sigma$ is linearly connected. Both these notions come from the geometry of planar quasidisks and we will not define therm here, see \cite{cop:john-nehari}.   Although $\Sigma^*$ will most likely not be a minimal surface, one might expect it also to have these properties.

\subsection{The Ahlfors-Weill Extension} It is possible to express the reflection $\RR$ in terms intrinsic to the surface $\Sigma$.
Recall the function $\lambda_\Sigma$ from \eqref{eq:Poincare-Sigma}, with $\lambda_\Sigma \circ \tf = (\U\tf)^2$.
Using \eqref{eq:w^*-w} and \eqref{eq:U_z} it is easy to verify that
\begin{equation} \label{eq:reflection-intrinsic}
\RR(\tw) = \tw + 2J(\nabla \log \lambda_{\Sigma}(\tw)),
\end{equation}
where,  following Ahlfors, $J$ is the M\"obius inversion centered at the origin,
\begin{equation} \label{eq:J}
J(p) = \frac{p}{|p|^2}.
\end{equation}

The formula \eqref{eq:reflection-intrinsic} will be important in Section \ref{section:planar-extension}. Here, we make contact with the classical Ahlfors-Weill extension, which, when $\tf$ is analytic in $\D$, can be written as
\[
F(z) =
\begin{cases}
\tf(z), \quad z \in \overline{\D},\\
\tf(\zeta) + \displaystyle{\frac{(1-|\zeta|^2)\tf'(\z)}{\bar{\zeta} - \frac{1}{2}(1-|\zeta|^2)\displaystyle{\frac{\tf''(\zeta)}{\tf'(\zeta)}}}}
  =\Mf(1/\bar{\z},\z) \quad \zeta = 1/\bar{z}, \,z \in \mathbb{C} \setminus \overline{\D}.
\end{cases}
\]
Ahlfors and Weill did not express their extension in this form; see \cite {co:aw}.

Alternatively, if $\lambda_\Omega|dw|$ is the Poincar\'e metric on $\Omega = f(\D)$ then
\[
F(z) =
\begin{cases}
\tf(z), \quad z \in \overline{\D},\\
\tf(\zeta) +\displaystyle{\frac{1}{\partial_w\log\lambda_\Omega(\tf(\zeta))}}, \quad \zeta = 1/\bar{z}, \,z \in \mathbb{C} \setminus \overline{\D}.
\end{cases}
\]
The equation \eqref{eq:reflection-intrinsic} for the reflection gives exactly
\begin{equation} \label{eq:aw-extension-gradient}
\RR(\tf(\zeta)) = \tf(\z) + \frac{1}{\partial_w\log\lambda_\Omega(\tf(\z))}
\end{equation}
 when $\tf$ is analytic.

 The  reflection defining the Ahlfors-Weill extension was expressed in a form like \eqref{eq:aw-extension-gradient} also by Epstein \cite{epstein:reflections}. Still another interesting geometric construction, using Euclidean circles of curvature, was given by D. Minda \cite{minda:reflections}.

The Ahlfors-Weill reflection is conformally natural, meaning in this case that if $M$ is a M\"obius transformation of $\overline{\mathbb{C}}$ and $ \Omega'=M(\Omega)$ with corresponding reflection $\RR'$, then
$
\RR' \circ M = M \circ \RR$.
From the perspective of the present paper this is so because all the tangent planes $T_z(\Omega)$ to $\Omega$ can be identified with $\mathbb{C}$, which is preserved by the extensions to $\overline{\mathbb{R}^3}$ of the M\"obius transformations. In the more general setting, if $M$ is a M\"obius transformation of $\overline{\R}$ then $\Sigma'=M(\Sigma)$ also supports a circle bundle $\mathfrak{C}(\Sigma')$ and hence an associated reflection $\RR'$. But while it is true that $\mathfrak{C}(M(\Sigma))=M(\mathfrak{C}(\Sigma))$ it is not true that
\[
\RR'\circ M = M \circ \RR,
\]
i.e.,  the reflection is not conformally natural. The reason is that the reflections $\RR$ and $\RR'$ use tangent planes for $\Sigma$ and $\Sigma'$, while $M$ may map tangent planes for $\Sigma$ to tangent planes or tangent spheres for $\Sigma'$. We do not know how to define a conformally natural reflection, at least one that is suited to our analysis.


 \subsection{A Bound on $\|\nabla\log\U\tf\|$ and a Classical Distortion Theorem}

 In the proof of Theorem \ref{theorem:reflection} we needed the lower bound \eqref{eq:grad-u-lower-bound} on $\|\nabla \U\tf\|$.
  In Section \ref{subsection:dilatation}, where we bound the dilatation of the extension $\E\tf$ to prove its quasiconformality, we will need a corresponding upper bound (to be used again in connection to \eqref{eq:C-diameter}). We state the result as

\begin{lemma} \label{lemma:grad-u-upper-bound}
 If $\tf$ satisfies \eqref{eq:AW-condition}  then
   \begin{equation}  \label{eq:grad-u-upper-bound}
\|\nabla \log \U\tf(\z)\| \le \frac{\sqrt{2}}{1-|\z|^2}.
\end{equation}
 \end{lemma}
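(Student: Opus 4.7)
\textit{Proof proposal.} The plan is to reduce to $\z = 0$ using the M\"obius invariance of $\U$, compute the gradient at the origin in terms of $\partial_z \sigma(0)$, and then invoke a classical Nehari--Kraus distortion estimate along a distinguished diameter.

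First, by \eqref{eq:U-invariant} the function $\U(\tf \circ M) = (\U\tf) \circ M$ for any disk automorphism $M$. Choosing $M$ so that $M(0) = \z$, the chain rule gives $\|\nabla\log\U(\tf\circ M)(0)\| = (1-|\z|^2)\|\nabla\log\U\tf(\z)\|$. Since \eqref{eq:AW-condition} is invariant under such pre-composition (Schwarz's lemma and the invariance of the Schwarzian), it suffices to prove $\|\nabla\log\U\tf(0)\| \le \sqrt{2}$ for every $\tf$ satisfying \eqref{eq:AW-condition}.

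From \eqref{eq:U_z} at $\z = 0$ we get $\partial_z \log\U\tf(0) = -\tfrac{1}{2}\partial_z \sigma(0)$. Since $\log\U\tf$ is real-valued, $\|\nabla\log\U\tf(0)\| = 2|\partial_z \log\U\tf(0)| = |\partial_z\sigma(0)|$, so the goal becomes $|\partial_z\sigma(0)| \le \sqrt{2}$. The radial derivative $\partial_r\sigma(0)|_\theta = 2\Re(e^{i\theta}\partial_z\sigma(0))$ attains its maximum $2|\partial_z\sigma(0)|$ at some angle $\theta_0$; after a rotation we may assume $\theta_0 = 0$, so $\partial_x\sigma(0) = 2|\partial_z\sigma(0)|$. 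Along the diameter $(-1,1)$, set $v(x) = e^{\sigma(x)}$ and $V = v^{-1/2}$. Then $V > 0$, $V'(0)/V(0) = -|\partial_z\sigma(0)|$, and by \eqref{eq:v-and-S1} the ODE $V'' + PV = 0$ holds with $P(x) \le 1/(1-x^2)^2$.

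The remaining bound $|V'(0)/V(0)| \le \sqrt{2}$ is essentially the classical Kraus--Nehari distortion theorem, which in its standard form reads $|(1 - |z|^2)(f''/f')(z) - 2\bar z| \le 2\sqrt{2}$ for analytic $f$ satisfying $|Sf(z)|(1-|z|^2)^2 \le 2$. Its proof proceeds by comparison of $V$ with the explicit solutions $\sqrt{1-x^2}(A + B\tanh^{-1}x)$ of the extremal equation $V'' + V/(1-x^2)^2 = 0$, using positivity of $V$ throughout $(-1,1)$ to constrain the ratio $B/A$. In the present more general setting $V$ is still real (being built from the real conformal factor $\sigma$); the same comparison, combined with the hyperbolic convexity of $\U(T \circ \tf)$ for every M\"obius $T$ of $\overline{\R}$ (Lemma~\ref{lemma:U-convex}), delivers the constant $\sqrt{2}$.

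The principal obstacle is that the one-dimensional inequality $P \le 1/(1-x^2)^2$ along a single diameter is by itself too weak to bound $|V'(0)/V(0)|$: for example $V(x) = e^{kx}$ satisfies $V'' + PV = 0$ with $P = -k^2$ for any $k \in \mathbb{R}$, respecting the pointwise bound yet violating any a~priori estimate. The sharpening comes from the full two-dimensional hypothesis \eqref{eq:AW-condition}, most naturally exploited via the M\"obius-invariant hyperbolic convexity of $\U(T\circ\tf)$ of Lemma~\ref{lemma:U-convex}, which rules out such spurious one-dimensional solutions and forces the constant $\sqrt{2}$.
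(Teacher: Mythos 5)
Your reduction is correct: by \eqref{eq:U-invariant} and the M\"obius-invariance of \eqref{eq:AW-condition} one may take $\z=0$, and from \eqref{eq:U_z} one has $\|\nabla\log\U\tf(0)\|=|\partial_z\sigma(0)|$, so the lemma is equivalent to $|\partial_z\sigma(0)|\le\sqrt2$. After that, however, the proposal does not close. You set up the one-dimensional ODE $V''+PV=0$ along a diameter with $P\le(1-x^2)^{-2}$, then correctly observe (via $V=e^{kx}$) that this information plus positivity of $V$ cannot possibly yield the bound. The proposed remedy --- that ``the M\"obius-invariant hyperbolic convexity of $\U(T\circ\tf)$ \ldots rules out such spurious one-dimensional solutions and forces the constant $\sqrt2$'' --- is asserted, not proved. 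You never explain which M\"obius transformations $T$ to use, what additional constraint their convexity imposes on $V$, or how the comparison with $\sqrt{1-x^2}(A+B\tanh^{-1}x)$ is then carried out. In effect the proposal identifies the hard step and stops there, so there is a genuine gap at the center of the argument.

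The paper's own proof uses a different and genuinely two-dimensional mechanism. Setting $\tau=|\partial_z\sigma|$, one computes
\[
2\tau\,\partial_z\tau=(\partial_{z\bar z}\sigma+|\partial_z\sigma|^2)\partial_z\sigma+(\partial_{zz}\sigma-(\partial_z\sigma)^2)\partial_{\bar z}\sigma,
\]
and then uses two facts that are invisible along a single diameter: $\partial_{z\bar z}\sigma\ge0$ (minimality, since $4\partial_{z\bar z}\sigma=-e^{2\sigma}K\ge0$), and $|\partial_{zz}\sigma-(\partial_z\sigma)^2|=\tfrac12|\Sch\tf|\le(1-|z|^2)^{-2}$ from \eqref{eq:AW-condition}. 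This yields the Riccati-type gradient inequality $\|\nabla\tau\|\ge\tau^2-(1-|z|^2)^{-2}$. One then follows an arc-length parametrized integral curve of $\nabla\tau$ issuing from the origin; if $\tau(0)=a>\sqrt2$, comparison with the explicit blow-up solution of $y'=y^2-(1-t^2)^{-2}$, $y(0)=a$, forces $\tau$ to become infinite inside $\D$, a contradiction. That Riccati/integral-curve comparison is precisely what replaces (and makes rigorous) the vague appeal to convexity in your sketch; without something playing that role the bound $|\partial_z\sigma(0)|\le\sqrt2$ is not established.

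A secondary point: the ``classical Kraus--Nehari distortion theorem'' you invoke, $|(1-|z|^2)(f''/f')(z)-2\bar z|\le 2\sqrt2$ under $|Sf|(1-|z|^2)^2\le2$, is indeed the analytic special case of the lemma, but you should not cite it as an independent input. Its proof already requires a two-dimensional argument of exactly the kind you are trying to avoid; quoting it here is circular for the purposes of this paper, and in the harmonic setting one also needs the extra curvature term that \eqref{eq:AW-condition} supplies. The paper's approach handles both at once by working directly with $\tau=|\partial_z\sigma|$.
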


 \begin{proof}
 From \eqref{eq:U_z}, we want to show
\begin{equation} \label{eq:sigma_z-bound}
 \left|\partial_z\sigma(\z) - \frac{\bar{\z}}{1-|\z|^2}\right| \le \frac{\sqrt{2}}{1-|\z|^2}.
\end{equation}
For this we first derive a lower bound for $\nabla|\partial_z\sigma|$. Let $\tau=|\partial_z\sigma|$, so that $\tau^2=\partial_z\sigma\partial_{\bar{z}}\sigma$ and
\[
2\tau\partial_z\tau = \partial_{zz}\sigma\partial_z\sigma+\partial_{z,\bar{z}}\sigma\partial_z\sigma =(\partial_{z\bar{z}}\sigma+|\partial_z\sigma|^2)\partial_z\sigma +(\partial_{zz}\sigma-(\partial_z\sigma)^2)\partial_z\sigma.
\]
Then
\[
2\tau|\partial_z\tau| \ge (\partial_{z\bar{z}}\sigma+|\partial_z\sigma|^2)\tau-|\partial_{z\bar{z}}\sigma-(\partial_z\sigma)^2|\tau \ge \tau^3-\frac{\tau}{(1-|\z|^2)^2},
\]
because in the first term $\sigma_{z\bar{z}} \ge 0$ and in the second because of \eqref{eq:AW-condition}. Thus
\[
|\partial_z\tau| \ge \tau^2 - \frac{1}{(1-|\z|^2)^2}.
\]

The desired estimate at $\z=0$ is
\begin{equation} \label{eq:tau-at-0}
\tau(0)=|\partial_z\sigma(0)| \le \sqrt{2},
\end{equation}
 and this will follow by showing that an initial condition $a=\tau(0) > \sqrt{2}$ leads to the contradiction that $\tau$ becomes infinite in $\mathbb{D}$. To this end, consider $v(t)=\tau(\z(t))$ along arc length parametrized integral curves $t\mapsto \z(t)$ to $\nabla\tau$. There exists such an integral curve starting at the origin because
\[
\|\nabla \tau(0)\| = 2|\partial_z\tau(0)| \ge 2\tau(0)^2-2 >0.
\]
The function $v(t)$ satisfies
\[
v'(t) = 2|\partial_z\tau(z(t))| \ge v^2(t)-\frac{1}{(1-|\z(t)|^2)^2} \ge v^2(t) - \frac{1}{(1-t^2)^2},
\]
since $|\z(t)| \le t$.

We compare $v(t)$ with the solution $y(t)$ of
\[
y'=y^2 -\frac{1}{(1-t^2)^2}, \quad y(0) = a,
\]
which is given by
\[
y = \frac{1}{2}\frac{n''}{n'},
\]
where
\[
n(t)=\frac{n_0(t)}{1-an_0(t)}, \quad \text{and} \quad n_0(t) =\frac{1}{\sqrt{2}}\frac{(1+t)^{\sqrt{2}}-(1-t)^{\sqrt{2}}}{(1+t)^{\sqrt{2}} + (1-t)^{\sqrt{2}}}.
\]
Because $a>\sqrt{2}$ there exists $0 < t_0<1$ for which $an_0(t_0)=1$. The function $y(t)$ is increasing for $0 \le t <t_0$ and  becomes unbounded there. There are then two possibilities. Either $v(t)$ becomes infinite before or at $t_0$, or the integral curve ceases to exist before that time. But while $v(t)$ is finite it is bounded below by $y(t) \ge a$, hence $|\nabla\tau|$ does not vanish, as shown above, so the integral curve can be continued. We conclude that $v(t)$ must become infinite before or at $t_0$, and this contradiction shows that \eqref{eq:tau-at-0} must hold.

To deduce \eqref{eq:sigma_z-bound} at an arbitrary point $\z_0 \in \mathbb{D}$ we consider
\[
\tf_1(z) = \tf(M(z)), \quad M(z) = \frac{z+\z_0}{1+\bar{\z_0}z}.
\]
Then $\tf_1$ satisfies \eqref{eq:AW-condition} and its conformal factor is
\[
e^{\sigma_1(z)} = e^{\sigma(M(z))}|M'(z)|.
\]
From this
\[
\partial_z\sigma_1(0) = (1-|\z_0|^2)\partial_z\sigma(\z_0) -\bar{\z_0},
\]
and \eqref{eq:sigma_z-bound} at $\z_0$ is obtained from $|\partial_z\sigma_1(0)| \le \sqrt{2}$.
  \end{proof}


\subsection*{Remarks} Suppose equality holds in \eqref{eq:grad-u-upper-bound}, so in \eqref{eq:sigma_z-bound}, at some $\z_0\in \mathbb{D}$. By composing $\tf$ with a M\"obius transformation of $\mathbb{D}$ onto itself we may suppose $\z_0=0$. The argument shows that $\partial_{z\bar{z}}\sigma$ must then vanish along the integral curve $\z(t)$ from the origin. Hence the curvature of the minimal surface $\Sigma$ vanishes on a continuum and $\Sigma$ must therefore be a planar. In turn this means that $\tf = h + \alpha\tilde{h}$ for some constant $\alpha<1$ and an analytic function $h$ for which \eqref{eq:AW-condition} holds. Because $\partial_z\sigma = (1/2)(h''/h')$ we see, from the case of equality in the analytic case, that $h$ must be an affine transformation of a rotation of the function $n$.

\smallskip

Lemma  \ref{lemma:grad-u-upper-bound}, on the one hand expressed as in \eqref{eq:sigma_z-bound}, is reminiscent of the classical distortion theorem for univalent functions, see, e.g., \cite{duren:univalent}. Namely, if $f$ is analytic and injective in $\mathbb{D}$ then
\begin{equation} \label{eq:distortion-theorem}
\left| \frac{1}{2}\frac{f''(\z)}{f'(\z)} - \frac{\bar{\z}}{1-|\z|^2}\right| \le \frac{2}{1-|\z|^2},
\end{equation}
with equality holding at a point exactly when $f$ is a rotation of the Koebe function $k(\z) = \z/(1-\z)^2$. On the other hand, it was observed in  \cite{osgood:f''/f'}  that \eqref{eq:distortion-theorem} can be written in terms of the Poincar\'e metric $\lambda_\Omega|dw|$ on $\Omega = f(\mathbb{D})$ as
 \[
 \|\nabla \log \lambda_\Omega\| \le 4 \lambda_\Omega.
 \]
For the harmonic case we recall \eqref{eq:lambda-Sigma} and \eqref{eq:Poincare-Sigma} where we had $\U\tf = (\lambda_\Sigma \circ f)^{1/2}$ with $\lambda_\Sigma$ playing the role of the Poincar\'e metric on $\Sigma$. 
The bound \eqref{eq:grad-u-upper-bound} becomes
\[
\|\nabla \log\lambda_\Sigma\| \le 2\sqrt{2}\,\lambda_\Sigma.
\]

\section{Best M\"obius Approximations, II: Dependence on the base point} \label{section:BMA-2}

We return to properties of best M\"obius approximations and examine  how $\Mf(z,\z)$  varies with the base point $\z$. First, when $\tf$ is analytic and $\Mf(z,\z)$ is given by \eqref{eq:m-analytic} and
\eqref{eq:Mf-analytic} we find
\begin{equation} \label{eq:Mf-dependence-on-zeta-1}
\partial_{\z}\Mf(z,\z) 
= \frac{1}{2}\tf'(\z)\Sch\tf(\z)m(z,\z)^2.
\end{equation}
In particular
\begin{equation} \label{eq:basepoint-stationary-analytic}
\left.\partial_\z\Mf(z,\z)\right|_{\z=z} =0.
\end{equation}
 An additional such result is how the conformal factor $|(\Mf)'(z,\z)|=|\partial_z\Mf(z,\z)|$ depends on $\z$, for which we obtain
\begin{equation} \label{eq:Mf-dependence-on-zeta-2}
\partial_\z\log|\partial_z\Mf(z,\z)| 
= \frac{1}{2}\Sch\tf(\z)m(z,\z).
\end{equation}
Again in particular
\begin{equation} \label{eq:horosphere-raius-stationary-anaylytic}
\left.\partial_\z|(\Mf)'(z,\z)|\right|_{\z=z}=0.
\end{equation}
Equations  \eqref{eq:basepoint-stationary-analytic} and \eqref{eq:horosphere-raius-stationary-anaylytic}
have counterparts in the harmonic case.

Starting with \eqref{eq:M-formula},
\begin{equation} \label{eq:M_xi}
\begin{split}
\partial_\xi\Mf(z,\z) = \partial_\xi\tf(\z) &+ \Re\{\partial_\xi m(z,\z)\} \partial_\xi\tf(\z) + \Re\{m(z,\z)\} \partial_{\xi\xi}\tf(\z)\\
 &+ \Im\{\partial_\xi m(z,\z)\}\partial_\eta\tf(\z) + \Im\{m(z,\z)\}\partial_{\xi\eta}\tf(\z),
 \end{split}
\end{equation}
and we calculate that
\begin{equation} \label{eq:deriv-m-xi}
\begin{aligned}
\partial_\xi m(z,\z) 
&= \frac{-1+\partial_\xi\partial_\z\sigma(\z)(z-\z)^2}{(1-\partial_\z\sigma(\z)(z-\z))^2}.
\end{aligned}
\end{equation}
Similarly,
\begin{equation} \label{eq:deriv-m-eta}
\begin{aligned}
\partial_\eta m(z,\z) & 
= \frac{-i+\partial_\eta\partial_\z\sigma(\z)(z-\z)^2}{(1-\partial_\z\sigma(\z)(z-\z))^2}.
\end{aligned}
\end{equation}
To do more with \eqref{eq:M_xi} we need to work with the second derivatives of $\tf$. Recalling that $\mathbf{N}$ is the unit normal to $\Sigma$, we can write
\[
\begin{aligned}
\partial_{\xi\xi}\tf & = \alpha_{11} \mathbf{N} + \beta_{11} \partial_\xi \tf + \gamma_{11}\partial_\eta \tf, \\
\partial_{\xi \eta}\tf & = \alpha_{12} \mathbf{N} + \beta_{12} \partial_\xi \tf + \gamma_{12}\partial_\eta \tf, \\
\partial_{\eta\eta}\tf &= \alpha_{22} \mathbf{N} + \beta_{22}\partial_\xi\tf +\gamma_{22}\partial_{\eta}\tf.
\end{aligned}
\]
The $\alpha_{ij}$
are the components of the second fundamental form.

We find the other coefficients in terms of the derivatives of $\sigma$. For example, starting with $\langle \partial_{\xi\xi}\tf,\partial_\xi\tf\rangle = \beta_{11}e^{2\sigma}$, from the first equation we have also
\[
 \beta_{11}e^{2\sigma} = \langle \partial_{\xi\xi}\tf,\partial_\xi\tf\rangle  = \frac{1}{2}\partial_\xi\langle \partial_\xi\tf,\partial_\xi\tf\rangle = \frac{1}{2}\partial_\xi(e^{2\sigma})= (\partial_\xi\sigma)e^{2\sigma}.
 \]
 Thus
 \[
 \beta_{11} = \partial_\xi\sigma.
 \]
Similar arguments apply to finding the other coefficients, and the final equations are:
\begin{equation} \label{eq:second-partials}
\begin{aligned}
\partial_{\xi\xi}\tf & = \alpha_{11} \mathbf{N} +\partial_\xi\sigma\, \partial_\xi \tf -\partial_\eta\sigma\,\partial_\eta \tf,\\
\partial_{\xi \eta}\tf & = \alpha_{12} \mathbf{N} +\partial_\eta\sigma \,\partial_\xi \tf + \partial_\xi\sigma\,\partial_\eta \tf,\\
\partial_{\eta\eta}\tf &= \alpha_{22} \mathbf{N} -\partial_\xi\sigma\,\partial_\xi\tf +\partial_\eta\sigma\,\partial_{\eta}\tf.
\end{aligned}
\end{equation}
The derivatives and the $\alpha_{ij}$ are to be evaluated at $\z$, and $\mathbf{N}=\mathbf{N}_{\tw}$.

Substituting this into \eqref{eq:M_xi},
\begin{equation} \label{eq:xi-derivartive-1}
\begin{aligned}
\partial_\xi\Mf(z,\z)
 &= \left[\Re\left\{1+\partial_\xi m(z,\z)+\partial_\xi\sigma(\z)m(z,\z)\right\}+\Im\left\{\partial_\eta\sigma(\z)m(z,\z)\right\}\right]\partial_\xi\tf(\z)\\
 &+\left[\Im\{\partial_\xi m(z,\z)+\partial_\xi\sigma(\z)m(z,\z)\}-\Re\{\partial_\eta\sigma(\z)m(z,\z)\}\right]\partial_\eta\tf(\z)\\
 &+\left[\alpha_{11}(\z)\Re\{m(z,\z)\}+\alpha_{12}(\z)\Im\{m(z,\z)\}\right]\mathbf{N}_{\tw}\\
  \end{aligned}
\end{equation}
Now let
\[
C(z,\z) =1+\partial_\xi m(z,\z) +2\partial_\z\sigma(\z)m(z,\z),
\]
so that
\begin{equation} \label{eq:xi-derivative-2}
\begin{aligned}
\partial_\xi\Mf(z,\z)  &= \Re\{C(z,\z)\} \partial_\xi\tf(\z) + \Im\{C(z,\z)\}\partial_\eta\tf(\z)\\
&\hspace{.4in}+ \left[\alpha_{11}(\z)\Re\{m(z,\z)\}+\alpha_{12}(\z)\Im\{m(z,\z)\}\right]\mathbf{N}_{\tw}.
\end{aligned}
\end{equation}
The terms in the expression for $C(z,\z)$ combine to result in
\[
\begin{aligned}
C(z,\z) &= \frac{(\partial_\xi\partial_\z\sigma(\z)-\partial_\z\sigma(\z)^2)(z-\z)^2}{(1-\partial_\z\sigma(\z)(z-\z))^2}\\
&=m(z,\z)^2(\partial_\xi\partial_\z\sigma(\z)-\partial_\z\sigma(\z)^2).
\end{aligned}
\]
We can take this further, for with
$
\partial_\xi = \partial_\z+\partial_{\bar{\z}},
$
and $\partial_\xi\partial_\z\sigma(\z)-\partial_\z\sigma(\z)^2= \partial_{\z\z}\sigma(\z)-\partial_\z\sigma(\z)^2+\partial_{\z\bar{\z}}\sigma(\z)$
we obtain the final form
\begin{equation} \label{eq:C}
\begin{aligned}
C(z,\z) 
&=m(z,\z)^2\left(\frac{1}{2}\Sch\tf(\z)-\frac{1}{4}e^{2\sigma(\z)}K(\tf(\z))\right).
\end{aligned}
\end{equation}

For later use, let 
\begin{equation} \label{eq:v-and-vn}
\begin{aligned}
\mathbf{v}(z,\z) &= \Re\{C(z,\z)\} \partial_\xi\tf(\z) + \Im\{C(z,\z)\}\partial_\eta\tf(\z), \\
v_n(z,\z) &= \alpha_{11}(\z)\Re\{m(z,\z)\}+\alpha_{12}(\z)\Im\{m(z,\z)\},
\end{aligned}
\end{equation}
 exhibiting
\begin{equation}  \label{eq;v-and-N}
\partial_\xi\Mf(z,\z) = \mathbf{v}(z,\z) + v_n(z,\z)\mathbf{N}_\tw
\end{equation}
as resolved into  velocities tangential to and normal to $T_\tw(\Sigma)$. If $\Sigma$ were planar the normal component would not be present. For the tangential component,
\begin{equation} \label{eq:bound-on-v}
\|\mathbf{v}(z,\z)\| \le \frac{1}{2}e^{\sigma(\z)}|m(z,\z)|^2|(|\Sch\tf(\z)|+\frac{1}{2}e^{2\sigma(\z)}|K(\tf(\z))|),
\end{equation}
and for the normal component,
\begin{equation} \label{eq:bound-on-v_n}
|v_n(z,\z)| \le e^{2\sigma(\z)}|m(z,\z)|\sqrt{|K(\tf(\z))|}
\end{equation}
by the Cauchy-Schwarz inequality and using that for minimal surfaces
\begin{equation} \label{eq:alpha-and-K}
\alpha_{11}^2+\alpha_{12}^2=e^{4\sigma}|K|.
\end{equation}

We  record  some corresponding equations for $\partial_\eta\Mf(z,\z)$. The calculations are very similar, but the end result is a little different, namely
\begin{equation} \label{eq:eta-derivative}
\begin{aligned}
\partial_\eta\Mf(z,\z)&=-\Im\{C'(z,\z)\}\partial_\xi\tf(\z)+\Re\{C'(z,\z)\}\partial_\eta\tf(\z) \\
&\hspace{.4in} + \left[\alpha_{12}(\z)\Re\{m(z,\z)\}+\alpha_{22}(\z)\Im\{m(z,\z)\}\right]\mathbf{N}_{\tw},
\end{aligned}
\end{equation}
where
\begin{equation} \label{eq:C'}
C'(z,\z) = m(z,\z)^2\left(\frac{1}{2}\Sch\tf(\z) +\frac{1}{4}e^{2\sigma(\z)}K(\tf(\z))\right).
\end{equation}
Note that the curvature enters with a plus sign.

As consequences of these expressions we have
\begin{equation} \label{eq:base-point-frozen}
\left.\partial_\xi\Mf(z,\z)\right|_{z=\z} = 0, \quad  \left.\partial_\eta\Mf(z,\z)\right|_{z=\z}=0.
\end{equation}
This is the analog to \eqref{eq:basepoint-stationary-analytic}.

The result we seek on the conformal factor $\|D\Mf(z,\z)\|$ analogous to \eqref{eq:horosphere-raius-stationary-anaylytic}
is
\begin{equation} \label{eq:mixed-derivatives-harmonic}
\left.\partial_\xi \|D\Mf(z,\z)\|\right|_{\z=z} = 0, \quad  \left.\partial_\eta\|D\Mf(z,\z)\|\right|_{\z=z}=0.
\end{equation}
 Because $z\mapsto \Mf(z,\z)$ is conformal it suffices to show that
\begin{equation} \label{eq:conformal-factor-stationary}
\left.\partial_\xi\|\partial_x\Mf(z,\z)\|\right|_{\z=z} = 0= \left.\partial_\eta\|\partial_x\Mf(z,\z)\|\right|_{\z=z}, \quad z=x+iy.
\end{equation}
For this
\[
\partial_x\Mf(z.\z) = \Re\{\partial_xm(z,\z)\}\partial_\xi\tf(\z) + \Im\{\partial_xm(z,\z)\}\partial_{\eta}\tf(\z),
\]
whence from \eqref{eq:partial_x-m}
\[
\|\partial_x\Mf(z,\z)\|=|\partial_xm(z,\z)|e^{\sigma(\z)} = \frac{e^{\sigma(\z)}}{|1-\partial_\z\sigma(\z)(z-\z)|^2}.
\]
Next, we consider
\[
\begin{aligned}
\log\|\partial_x\Mf(z,\z)\| &= \sigma(\z) -\log(1-\partial_\z\sigma(\z)(z-\z)) -\log(1-\overline{\partial_\z\sigma(\z)}\,\overline{(z-\z)})\\
&= \sigma(\z)-\log(1-\partial_\z\sigma(\z)(z-\z)) -\log(1-\partial_{\bar{\z}}\sigma(\z)\,\overline{(z-\z)}).
\end{aligned}
\]
To establish \eqref{eq:conformal-factor-stationary} we can work with $\partial_\z$:
\[
\begin{aligned}
\partial_\z\log\|\partial_x\Mf(z,\z)\| 
&= \frac{(\partial_{\z\z}\sigma(\z)-\partial_\z\sigma(\z)^2)(z-\z)}{1-\partial_\z\sigma(\z)(z-\z)} + \frac{\partial_{\z\bar{\z}}\sigma(\z)\overline{(z-\z)}}{1-\partial_{\bar{\z}}\sigma(\z)\,\overline{(z-\z)}}\\
&= \frac{\frac{1}{2}\Sch\tf(\z)(z-\z)}{1-\partial_\z\sigma(\z)(z-\z)} -\frac{\frac{1}{4}e^{2\sigma(\z)}K(\tf(\z))\overline{(z-\z)}}{1-\partial_{\bar{\z}}\sigma(\z)\,\overline{(z-\z)}}\\
&=\frac{1}{2}\Sch\tf(\z)m(z,\z) - \frac{1}{4}e^{2\sigma(\z)}K(\tf(\z))\overline{m(z,\z)}.
\end{aligned}
\]
Equations \eqref{eq:mixed-derivatives-harmonic} follow from $m(\z,\z)=0$.

\subsection{An Application to Horospheres} \label{subsection:horosphere-frozen}
In the next section we will use the equations above in the proof of the quasiconformality of the extension $\E\tf$. One aspect of this is the geometry of horospheres in $\Hyp$ and their images under the mappings $\Mf(p,\zeta)$ for varying $\z$.

It is a  result from hyperbolic geometry that if $H_\z$ is a horosphere in $\Hyp$ of Euclidean radius $a$ and with base point $\z$, and if $M:\Hyp \to \Hyp$ is a M\"obius transformation, then the Euclidean radius $a'$ of the image horosphere $M(H_\z)$, $M(\z) \ne \infty$, is
\begin{equation} \label{eq:change -in-radius}
a'=\|DM(\z)\|a.
\end{equation}

Certainly the analogous formula holds for $\Mf(p,\z)$ mapping a horosphere $H_\z \subset \Hyp$ to a horosphere $H_\tw \subset \Hyp_\tw(\Sigma)$, $\tw = \Mf(\z,\z)$, but more can be said. Fix a horosphere $H_{\z_0} \subset \Hyp$ of Euclidean radius $a$. For a different base point $\z$ the mapping $p\mapsto \Mf(p,\z)$ takes $\mathbb{C}$ to the tangent plane $T_\tw(\Sigma)$, $\tw= \Mf(\z,\z)$ and takes $H_{\z_0}$ to a horosphere $H_{\tw'}\subset \Hyp_\tw(\Sigma)$ based at $\tw'=\Mf(\z_0,\z)$ and of Euclidean radius, say, $a'$. Then $\tw'$ and $a'$ are both functions of $\z=\xi+i\eta$. From \eqref{eq:base-point-frozen} we conclude
\begin{equation} \label{eq:base-point-frozen-2}
\partial_\xi\tw'|_{\z=\z_0}=0,\quad \partial_\eta\tw'|_{\z=\z_0}, 
\end{equation}
while from \eqref{eq:conformal-factor-stationary} and \eqref{eq:change -in-radius} we also have
\begin{equation} \label{eq:conformal-factor-stationary-2}
\partial_\xi a'|_{\z=\z_0}=0,\quad \partial_\eta a'|_{\z=\z_0}=0.
\end{equation}
Put another way, to first order at $\z_0$,
\[
H_{\tw_0} = \Mf(H_{\z_0},\z_0) = \Mf(H_{\z_0},\z) =H_{\tw'}.
\]

\section{Quasiconformality of the Extension $\E\tf$} \label{section:Ef-quasiconformal}

Recall how the extension is defined, in \eqref{eq:Ef}, for points in space:
\[
\E\tf(p) =
\begin{cases}
\Mf(p,\zeta),& \quad p \in C_\zeta,\\
\tf(p),& \quad p \in \partial\D.
\end{cases}
\]
We will establish the existence of a constant $k(\rho)$ such that
\begin{equation} \label{eq:dilatation-bound-1}
\frac{1}{k(\rho)} \le \frac{\max_{\|X\| =1} \|D\E\tf(p)X\|}{\min_{\|X\| =1} \|D\E\tf(p)X\|} \le k(\rho),
\end{equation}
for $p$ in the upper half-space; the arguments and estimates are identical if $p$ is in the lower half-space. Since $\E\tf$ is a homeomorphism of $\overline{\mathbb{R}^3}$ it then follows that $\E\tf$ is $k(\rho)$-quasiconformal everywhere.

A point $p\in \Hyp$ is the intersection of a circle $C_\z$ with a horosphere $H_\z$ in $\Hyp$ that is tangent to $\D$ at $\z$, and
to assess the distortion one can regard $\E\tf$ as acting in directions tangent to and normal to the circles $C_\z$.  As $C_\z$ is orthogonal to $H_\z$ at $p$ the objective is thus to estimate $\|D\E\tf(p)(X)\|$ when a unit vector $X$ is tangent to $C_\z$ at $p$ and when it is tangent to $H_\z$ at $p$.  For this, we add a parameter $t\in \mathbb{R}$ to the circle-horosphere configuration, aiming to adapt to minimal surfaces  the parallel flow in hyperbolic space introduced by Epstein \cite{epstein:reflections} in his study of the classical Ahlfors-Weill extension.

We need a number of notions and formulas from the hyperbolic geometry of the upper half-space $\Hyp$. To begin with, the upper hemisphere over $\D$ in $\Hyp$, denoted $\Sph(0)$ with parameter $t=0$, is the envelope of the family of horospheres $H_\z(0)$, $\z \in \D$, of Euclidean radius
\[
a(\z,0) = \frac{1}{2}(1-|\z|^2).
\]
Starting at the point $p(\z,0) = C_\z \cap H_\z(0)$, follow the hyperbolic geodesic $C_\z$ at unit speed for a time $t$ to the point $p(\z,t)=C_\z\cap H_\z(t)$, where the horosphere $H_\z(t)$ (still based at $\z$) has radius
\begin{equation} \label{eq:horosphere-radius}
a(\z,t) = e^{2t}a(\z,0) = \frac{1}{2}e^{2t}(1-|\z|^2).
\end{equation}
Here $t>0$ moves $p(\z,t)$  upward from $\Sph(0)$ along $C_\z$ and $t<0$ moves $p(\z,t)$ downward from $\Sph(0)$ along $C_\z$. Fixing $t$ and varying $\z$ defines a surface $\Sph(t)$ that is simply a portion of a sphere that intersects the complex plane along  $\partial\D$. It is the envelope of the family of horospheres $H_\z(t)$ and   $p(\z,t)$ is the point of tangency between $\Sph(t)$ and $H_\z(t)$. Varying $t$ as well then gives a family of hyperbolically parallel surfaces in $\Hyp$. For $t <0$ the surface $\Sph(t)$ lies inside $\Sph(0)$ and for $t>0$ it lies outside $\Sph(0)$. The limiting cases as $t \rightarrow \mp \infty$ are, respectively, $\D$ and its exterior.

The mapping $\z \mapsto p(\z,t)$ is a parametrization  of $\Sph(t)$, and one obtains
\begin{equation} \label{eq:p(z,t)}
p(\z,t) = \left(\frac{1+e^{4t}}{1+e^{4t}|\z|^2}\xi, \frac{1+e^{4t}}{1+e^{4t}|\z|^2}\eta, \frac{e^{2t}(1-|\z|^2)}{1+e^{4t}|\z|^2}\right), \quad \z=\xi+i\eta.
\end{equation}
It is an important fact that this is a conformal mapping.  The corresponding conformal metric on $\D$ is
\begin{equation} \label{eq:conformal-factor-p}
\frac{1+e^{4t}}{1+e^{4t}|\z|^2}|d\z|.
\end{equation}

Consider now the  configuration in the image on applying $\E\tf$. The circles $C_\z$ in the bundle $\frak{C}(\D)$ map to corresponding circles $C_{\tw}$, $\tw=\Mf(\z,\z)$, in the bundle $\frak{C}(\Sigma)$, and for each $\z$ we have the one-parameter family of horospheres
\[
H_{\tw}(t) = \Mf(H_\z(t),\z)
\]
in  $\Hyp_{\tw}(\Sigma)$. The circle $C_{\tw}$ intersects each of the horospheres $H_{\tw}(t)$ orthogonally and we write  $\tilde{p}(\z,t) = C_{\tw} \cap H_{\tw}(t)$, so that for each $t$ the surface  \[ \Sigma(t) = \E\tf(\Sph(t))\] is parametrized by
\begin{equation} \label{eq:tilde-p}
\z \mapsto \tilde{p}(\z,t) = \Mf(p(\z,t),\z), \quad \z \in \D.
\end{equation}
\emph{However}, due to the curvature of $\Sigma$ the surface $\Sigma(t)$ need not be the envelope of the horospheres $H_{\tw}(t)$; they need not be tangent to $\Sigma(t)$ at $\tilde{p}(\z,t)$ and the circle $C_{\tw}$ need not be orthogonal to $\Sigma(t)$ there. Put another way, while the derivative of $\E\tf$ in the direction of a circle $C_\z$ will be tangent to the circle $C_\tw$, the derivative of $\E\tf$ in directions tangent to $\mathbb{S}(t)$ need not necessarily be tangent to the corresponding horospheres $H_\tw$. This is the key difference in geometry between our considerations and the case when $\tf$ is analytic and $\Sigma$ is planar as considered by Epstein. It is also the reason that the dilatation of the extension does not turn out to be a clean $(1+\rho)/(1-\rho)$.

Fix a point \[ p_0=p(\z_0,t_0).\] In the direction of $C_{\z_0}$ the extension acts as the fixed M\"obius transformation $\Mf(p,\z_0)$ and we can express the derivative of $\E\tf$ in that direction using the hyperbolic geometry of $\Hyp$ and of $\Hyp_{\tw_0}(\Sigma)$. The calculation of the derivatives of $\E\tf$ at $p_0$ in directions tangent to $\Sph(t_0)$ is equivalent to finding the derivatives of the mapping \eqref{eq:tilde-p} in the $\z$-variable. This is why we need the results in Section \ref{section:BMA-2}, and we are aided further by the fact that $\z \mapsto p(\z,t)$ is a conformal mapping with a known conformal factor. Calculating the derivative in the $\xi$-direction, the quantity we want is
\begin{equation} \label{eq:DM}
D(\Mf)(p(\z,t_0),\z)\partial_\xi p(\z,t_0) + \partial_\xi \Mf(p(\z,t_0),\z)
\end{equation}
evaluated at $\z=\z_0$. The first term contains the contribution from the differential of the mapping $\Mf$ while the second considers the variation of the M\"obius approximations from point to point. There is no essential difference in estimating the derivative in the $\eta$-direction so we consider only \eqref{eq:DM}. This is a consequence of the formulas \eqref{eq:xi-derivative-2} -- \eqref{eq:C'} in Section \ref{section:BMA-2} and \eqref{eq:p(z,t)}, or also because we can as well work with $\tf \circ M$ for any M\"obius transformation $M$ of the disk.

The expression \eqref{eq:DM} represents a vector in the image of $\E\tf$ and, to make use of the geometry, when $\z=\z_0$ we want to resolve it into components tangent to and normal to $H_{\tw_0}(t_0)$ -- this is central to the argument. First note that since $\Mf(p,\z)$ maps $H_\z(t_0)$ to $H_{\tw}(t_0)$, the first term is tangent to $H_{\tw}(t_0)$ at $\E\tf(p(\z,t_0))=\Mf(p(\z,t_0),\z)$. Moreover, because $\Mf(p,\z)$ is a hyperbolic isometry between $\Hyp$ and $\Hyp_{\tw}(\Sigma)$, the size of this first term is determined by $|\partial_\xi p(\z,t_0)|$ together with the heights of $p(\z,t_0)$ above $\mathbb{C}$ and of $\Mf(p(\z,t_0), \z)$ above $T_\tw(\Sigma)$. We will show:
\begin{enumerate}
\item[(A)] The first term in \eqref{eq:DM} is the dominant one when considering the contributions to the component tangent to $H_{\tw}(t_0)$.
\item[(B)] Except for the factor $|\partial_\xi p(\z,t_0)|$, this first term equals in size the derivative of $\E\tf$ in the direction of  the circle $C_\z$.
\item[(C)] The size of the terms in \eqref{eq:DM} normal to $H_{\tw}(t_0)$ are comparable to the derivative of $\E\tf$ in the direction of the circle $C_\z$.
\end{enumerate}

\subsection{Horospheres and Hyperbolic Stereographic Projection} \label{subsection:horospheres-stereographic}

While the first term in \eqref{eq:DM} is relatively straightforward to analyze, the second is not. To do so we will use a variant of stereographic projection based on horospheres and hyperbolic geodesics that is well suited to our geometric  arrangements and enables us to express a point $p\in \Hyp$ using planar data.

For the model case, suppose $p \in \Hyp$ lies on a horosphere $H$  having Euclidean radius $a$ that is based at $0 \in \mathbb{C}$. Let $C$ be the hyperbolic geodesic in $\Hyp$ passing through $p$ with endpoints $0$ and a point $q\in \mathbb{C}$.   To relate $p$ to $q$ we introduce the angle of elevation $\phi$ of the point $p$ as sited from the origin. Let let $\mathbf{e_r}$ be the unit vector in the radial direction in $\mathbb{C}$ and let $\mathbf{N}$ be the upward unit normal to $\mathbb{C}$ in $\Hyp$. See Figure 1 (in profile).
\begin{figure}
\centering
\includegraphics[scale=.7]{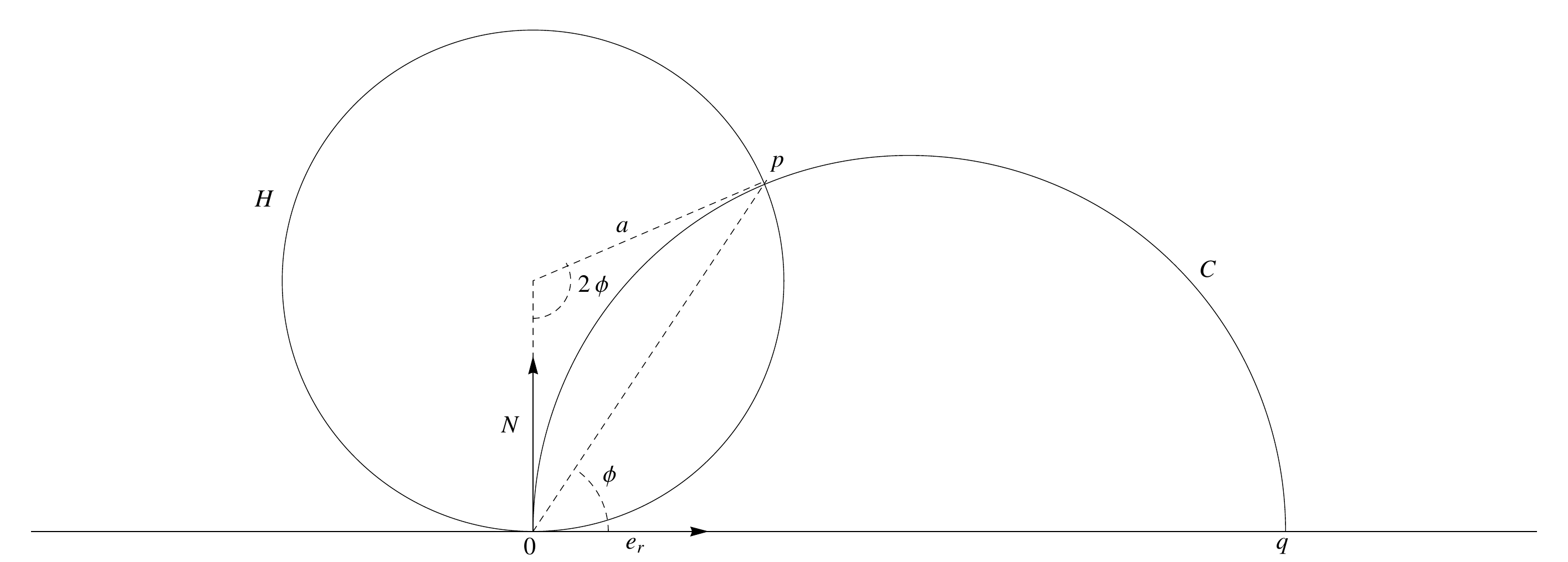}
\caption{Hyperbolic stereographic projection of $q$ to $p$.}
\end{figure}
Then
\[
p= \|q\|\cos\phi((\cos\phi )\mathbf{e_r}+(\sin \phi)\mathbf{N}),
\]
or alternatively
\begin{equation} \label{eq:horosphere-coordinates-2}
p = (\cos^2\!\phi) q + 2a(\sin^2\!\phi)\mathbf{N}.
\end{equation}
Note that $\phi$ depends on $a$. 
Letting $2r = \|q\|$, note also that
\begin{equation} \label{eq:sin-phi-vs-cos-phi}
r\cos\phi = a \sin\phi,
\end{equation}
and that the height of $p$ above the plane is
\begin{equation}
\label{eq:height-of-p}
\mathpzc{h}(p) = 2a\sin^2\!\phi = r \sin2\phi.
\end{equation}

Just as for classical stereographic projection, rays in $\mathbb{C}$ from the origin correspond to meridians on $H$ and circles in $\mathbb{C}$ concentric to the origin correspond to parallels on $H$. It follows that the mapping $q \mapsto p$ is conformal, and it is not difficult to show that the conformal metric on the plane  is $\cos^2 \!\phi|dq|$.

We want to use this to compute $\partial_\xi \Mf(p(\z,t_0),\z)$ at $\z_0$. Again, $p_0=p(\z_0,t_0) = C_{\z_0}\cap H_{\z_0}(t_0)$. As in Section \ref{subsection:horosphere-frozen},  consider a point $\z\in \mathbb{D}$ different from $\z_0$ and the M\"obius transformation $\Mf(p,\z)$. The image $C=\Mf(C_{\z_0},\z)$ is a circle orthogonal to $T_{\tw}(\Sigma)$, $\tw= \Mf(\z,\z)$, passing through $\Mf(\z_0,\z)$  and $\Mf(\z_0^*,\z)$.  The image $H=\Mf(H_{\z_0}(r_0),\z)$ is a horosphere in $\Hyp_{\tw}(\Sigma)$ tangent to $T_{\tw}(\Sigma)$ at $\Mf(\z_0,\z)$; say its radius is ${a}$. We can suppose that $\Mf(\z_0,\z)$ is  the origin of coordinates in $T_{\tw}(\Sigma)$ and apply \eqref{eq:horosphere-coordinates-2} with $C$ and $H$ as above and $q = \Mf(\z_0^*,\z)$ to write
\begin{equation} \label{eq:horosphere-coordinates-image}
\Mf(p_0,\z) = (\cos^2\!{\phi}) \Mf(\z_0^*,\z) + 2{a}(\sin^2\!{\phi})\mathbf{N}.
\end{equation}
Here  $\mathbf{N}$ is the normal to $T_{\tw}(\Sigma)$, and the quantities ${\phi}$, ${a}$ and $\mathbf{N}$ depend on $\z$.

Now recall also from Section \ref{subsection:horosphere-frozen} that to first order at $\z_0$ we have $H_{\tw_0}=\Mf(H_{\z_0},\z_0) = \Mf(H_{\z_0},\z)$. Thus for the purposes of computing the derivative  $\partial_\xi \Mf(p(\z,t_0),\z)$ at $\z_0$ we can regard $\Mf(\z_0^*,\z)$ as varying in the fixed plane $T_{\tw_0}(\Sigma)$ and as being projected to the fixed horosphere $H_{\tw_0}$ along a geodesic $C$ whose one endpoint stays fixed at $\tw_0$ and whose other endpoint is varying in $T_{\tw_0}(\Sigma)$.

\subsection{Components of $\partial_\xi\Mf(p,\z)$} \label{subsection:components-of-partial-M}

Directly from \eqref{eq:horosphere-coordinates-image} we compute 
\[
\begin{aligned}
\partial_\xi \Mf(p_0,\z)&=-2\cos\phi\sin\phi(\partial_\xi\phi)\Mf(\z_0^*,\z)+ (\cos^2\!\phi)\partial_\xi\Mf(\z_0^*,\z)\\
&\hspace{.5in}+2(\partial_\xi a)(\sin^2\!\phi) \mathbf{N} + 4a\sin\phi\cos\phi(\partial_\xi\phi)N +2a(\sin^2\!\phi)\partial_\xi \mathbf{N}.
\end{aligned}
\]
Evaluate this at $\z=\z_0$ using
$
\partial_\xi a|_{\z=\z_0}=0
$
from \eqref{eq:conformal-factor-stationary-2}, and to ease notation write $\phi_0$, $a_0$, $\mathbf{N}_0$  for these quantities at $\z_0$: 
\[
\begin{aligned}
\partial_\xi \Mf(p_0,\z_0)
&= (-2\cos\phi_0\sin\phi_0(\partial_\xi\phi)_0)(\Mf(\z_0^*,\z_0)-2a_0\mathbf{N}_0)\\
&\hspace{.5in}+\cos^2\!\phi_0\,\partial_\xi\Mf(\z_0^*,\z_0)+2a_0\sin^2\!\phi_0\,(\partial_\xi \mathbf{N})_0
\end{aligned}
\]
We invoke \eqref{eq;v-and-N}  for $z=\z_0^*$ to substitute for  $\partial_\xi\Mf(\z_0^*,\z_0)$:
\begin{equation} \label{eq:partial-of-Mf}
\begin{aligned}
\partial_\xi \Mf(p_0,\z_0) &= (-2\cos\phi_0\sin\phi_0(\partial_\xi\phi)_0)(\Mf(\z_0^*,\z_0)-2a_0\mathbf{N}_0)\\
&\hspace{.6in}+\cos^2\!\phi_0\mathbf{v}(\z_0^*,\z_0) + \cos^2\!\phi_0 \,v_n(\z_0^*,\z_0)\mathbf{N}_0 +2a_0(\sin^2\!\phi_0)(\partial_\xi \mathbf{N})_0.
\end{aligned}
\end{equation}

Now isolate the terms
\begin{equation} \label{eq;V-tan}
\begin{aligned}
\mathbf{V}_0&= \mathbf{V}(\z_0)=(-2\cos\phi_0\sin\phi_0(\partial_\xi\phi)_0)(\Mf(\z_0^*,\z_0)-2a_0\mathbf{N}_0)+\cos^2\!\phi_0\mathbf{v}(\z_0^*,\z_0).
\end{aligned}
\end{equation}
$\mathbf{V}_0$ is formed by omitting the terms that are present because $\Sigma$ has curvature. Because of the first-order congruence, as above, if there is no curvature the expression for $\mathbf{V}(\z)$, for $\z=\xi+i\eta_0$ varying in the $\xi$-direction from $\z_0$, is exactly the velocity of a point moving on the fixed horosphere $H_{\tw_0}$ under the hyperbolic stereographic projection of a point moving with velocity $\mathbf{v}(\z^*,\z_0)$ in the fixed plane $T_{\tw_0}(\Sigma)$. Thus in general when $\Sigma$ has curvature, $\mathbf{V}(\z_0)$, having no curvature terms, is tangent to $H_{\tw_0}$ at $\E\tf(p_0)=\Mf(p_0,\z_0)$. Moreover the fact that hyperbolic stereographic projection is conformal with velocities scaling by $\cos^2\phi$ allows us to say, with \eqref{eq:bound-on-v} and $\|\partial_\xi \tf\| = \|\partial_\eta \tf\|=e^\sigma$, that
\begin{equation} \label{eq:bound-on-V}
\begin{aligned}
\| \mathbf{V}(\z_0)\| &= \cos^2\!\phi_0 \,\|\mathbf{v}(\z_0^*,\z_0)\|\\
& \le \frac{1}{2}\cos^2\!\phi_0 \,e^{\sigma(\z_0)}|m(\z_0^*,\z_0)|^2|(|\Sch\tf(\z_0)|+\frac{1}{2}e^{2\sigma(\z_0)}|K(\tf(\z_0))|).
\end{aligned}
\end{equation}

We turn  to the remaining two terms in \eqref{eq:partial-of-Mf},
\begin{equation} \label{eq:vector-w_0}
\mathbf{w}_0=\cos^2\!\phi_0 \,v_n(\z_0^*,\z_0)\mathbf{N}_0 +2a_0(\sin^2\!\phi_0)(\partial_\xi \mathbf{N})_0,
\end{equation}
and seek to write this in the form $\mathbf{W}_0+\mathbf{W}_0^\perp$, for $\mathbf{W}_0=\mathbf{W}(\z_0)$ tangent to and $\mathbf{W}_0^\perp=\mathbf{W}^\perp(\z_0)$ normal to $H_{\tw_0}$ at $\E\tf(p_0)= \Mf(p_0,\z_0)$, respectively.

For this we use polar coordinates $(r,\theta)$ in the plane $T_{\tw_0}(\Sigma)$ with $\tw_0=\Mf(\z_0,\z_0)$ as the origin and we let $\mathbf{e}_r$ and $\mathbf{e}_\theta$ be the orthonormal vectors in the radial and angular directions, respectively.  From the formula \eqref{eq:M-formula} for $\Mf$ we have
\begin{equation} \label{eq:M-w_0}
\Mf(\z_0^*,\z_0) - \tw_0 = \Re\{m(\z_0^*,\z_0)\}\partial_\xi \tf(\z_0)+ \Im\{m(\z_0^*,\z_0)\}\partial_\eta(\z_0),
\end{equation}
and because (as a general fact)
\[
(\partial_\xi\mathbf{N})_0= -(\alpha_{11}(\z_0)\partial_\xi\tf(\z_0) +\alpha_{12}(\z_0)\partial_\eta\tf(\z_0)),
\]
we get
\[
\begin{aligned}
\langle (\partial_\xi \mathbf{N})_0,\Mf(\z_0^*,\z_0)-\tw_0\rangle &= -(\alpha_{11}(\z_0)\Re\{m(\z_0^*,\z_0)\}+\alpha_{12}(\z_0)\Im\{m(\z_0^*,\z_0)\})\\
&= -v_n(\z_0^*,\z_0).
\end{aligned}
\]

Let
\begin{equation} \label{eq:2r_0}
2r_0=\|\Mf(\z_0^*,\z_0) - \tw_0\|.
\end{equation}
 Recalling that $r_0\cos\phi_0 = a_0 \sin\phi_0$ from \eqref{eq:sin-phi-vs-cos-phi}, we can write $\mathbf{w}_0$ as
\[
\begin{aligned}
\mathbf{w}_0&= \cos^2\!\phi_0 \,v_n(\z_0^*,\z_0)\mathbf{N}_0 -2a_0\sin^2\!\phi_0\,v_n(\z_0^*,\z_0)\frac{1}{2r_0}\mathbf{e}_r +\langle \mathbf{w}_0,\mathbf{e}_\theta\rangle \mathbf{e}_\theta\\
&= \cos^2\!\phi_0\,v_n(\z_0^*,\z_0) \mathbf{N}_0 -\sin\phi_0\cos\phi_0 \,v_n(\z_0^*,\z_0)\mathbf{e}_r + \langle \mathbf{w}_0,\mathbf{e}_\theta\rangle \mathbf{e}_\theta.
\end{aligned}
\]
The vector $\langle \mathbf{w}_0,\mathbf{e}_\theta\rangle \mathbf{e}_\theta$ is tangent to $H_{\tw_0}$ at $\E\tf(p_0)$ and points along the latitude through $\E\tf(p_0)$. We can find its magnitude. From \eqref{eq:M-w_0},
\[
2r_0\mathbf{e}_r= \Re\{m(\z_0^*,\z_0)\}\partial_\xi \tf(\z_0)+ \Im\{m(\z_0^*,\z_0)\}\partial_\eta(\z_0),
\]
 and so
\[
2r_0\mathbf{e}_\theta = -\Im\{m(\z_0^*,\z_0)\}\partial_\xi\tf(\z_0) +\Re\{m(\z_0^*,\z_0)\}\partial_\eta\tf(\z_0).
\]
Thus referring back to \eqref{eq:vector-w_0},
\[
\begin{aligned}
\langle \mathbf{w}_0,\mathbf{e}_\theta\rangle &= 2a_0^2\sin^2\!\phi_0\langle(\partial_\xi\mathbf{N})_0, \mathbf{e}_\theta\rangle\\
 &= \frac{2a_0}{2r_0}\sin^2\!\phi_0 \,e^{-\sigma(\z_0)}(\alpha_{11}(\z_0)\Im\{m(\z_0^*,\z_0)\}- \alpha_{12}(\z_0)\Re\{m(\z_0^*,\z_0)\})\\
&= \sin\phi_0\cos\phi_0 \,e^{-\sigma(\z_0)} (\alpha_{11}(\z_0)\Im\{m(\z_0^*,\z_0)\}- \alpha_{12}(\z_0)\Re\{m(\z_0^*,\z_0)\}).
\end{aligned}
\]
The remaining terms in $\mathbf{w}_0$,
\[
\mathbf{u}_0= \cos^2\!\phi_0\,v_n(\z_0^*,\z_0) \mathbf{N}_0 -\sin\phi_0\cos\phi_0\,v_n(\z_0^*,\z_0)\mathbf{e}_r,
\]
must be further resolved to
\[
\mathbf{u_0}= \mathbf{u}_1+\mathbf{W}_0^\perp,
\]
where $\mathbf{u}_1$ is tangent to the longitude through $\E\tf(p_0)$ and  $\mathbf{W}_0^\perp$ is normal to $H_{\tw_0}$ at $\E\tf(p_0)$.

It is easy to check that the vector $\mathbf{u}_0$ makes an angle $\pi/2 - \phi_0$ with the tangent plane to the sphere $H_{\tw_0}$ at $\E\tf(p_0)$, whence
\[
\| \mathbf{u}_1\|= \|\mathbf{u}_0\|\cos\left(\frac{\pi}{2}-\phi_0\right) = \sin\phi_0\cos\phi_0 \,|v_n(\z_0^*,\z_0)|,
\]
while
\[
\|\mathbf{W}_0^\perp\| = \|\mathbf{u}_0\|\sin\left(\frac{\pi}{2}-\phi_0\right)  =\cos^2\!\phi_0\,|v_n(\z_0^*,\z_0)|.
\]
Therefore we can write
\[
\mathbf{w}_0= \mathbf{W}_0+\mathbf{W}_0^\perp,
\]
with
\[
\mathbf{W}_0=\mathbf{u}_1+\langle\mathbf{w}_0,\mathbf{e}_\theta\rangle\mathbf{e}_\theta.
\]
We compute
\[
\begin{aligned}
\| \mathbf{W}_0\|^2 &= \|\mathbf{u}_1\|^2+\langle \mathbf{w}_0,\mathbf{e}_\theta\rangle^2\\
&= \|\mathbf{u}_1\|^2 +\sin^2\!\phi_0\cos^2\!\phi_0\left[\Im\{\alpha_{11}(\z_0)m(\z_0^*,\z_0)\} - \Re\{\alpha_{12}(\z_0)m(\z_0^*,\z_0)\}\right]^2\\
&=\sin^2\!\phi_0\cos^2\!\phi_0[(\Re\{\alpha_{11}(\z_0)m(\z_0^*,\z_0)+\Im\alpha_{12}(\z_0)m(\z_0^*,\z_0))^2\\
&\hspace{.5in} +(\Im\{\alpha_{11}(\z_0)m(\z_0^*,\z_0)\} - \Re\{\alpha_{12}(\z_0)m(\z_0^*,\z_0)\})^2]\\
&= \sin^2\!\phi_0\cos^2\!\phi_0(\alpha_{11}(\z_0)^2+\alpha_{12}(\z_0)^2)(\Re\{m(\z_0^*,\z_0)\}^2+\Im\{m(z_0^*,\z_0)\}^2)\\
&= \sin^2\!\phi_0\cos^2\!\phi_0|m(\z_0^*,\z_0)|^2 e^{4\sigma(\z)}|K(\tf(\z_0)|,
\end{aligned}
\]
where in the last line we have used  $\alpha_{11}^2+\alpha_{12}^2=e^{4\sigma}|K|$, \eqref{eq:alpha-and-K}.
Finally, from the bound on $|v_n|$ in \eqref{eq:bound-on-v_n},
\[
\|\mathbf{W}_0^\perp\| = \cos^2\!\phi_0\, |v_n| \le \cos^2\!\phi_0\,|m(\z_0^*,\z_0)|e^{2\sigma(\z_0)}\sqrt{|K(\tf(\z_0)|}.
\]

Taken together,
\begin{equation} \label{eq:V-W-Wperp-bounds}
\begin{aligned}
\|\mathbf{V}_0\| &\le \frac{1}{2}\cos^2\!\phi_0 \,|m(\z_0^*,\z_0)|^2\,e^{\sigma(\z_0)}(|\Sch\tf(\z_0)|+\frac{1}{2}e^{2\sigma(\z_0)}|K(\tf(\z_0))|),\\
\|\mathbf{W}_0\| &\le \sin\phi_0\cos\phi_0\,|m(\z_0^*,\z_0)| \,e^{2\sigma(\z_0)}\sqrt{|K(\tf(\z_0))|},\\
\|\mathbf{W}_0^\perp\| & \le \cos^2\!\phi_0\,|m(\z_0^*,\z_0)|\,e^{2\sigma(\z_0)}\sqrt{|K(\tf(\z_0)|}.
\end{aligned}
\end{equation}

For the calculations in the next section it will be convenient to write these inequalities 
a little differently. First,
\begin{equation} \label{eq:m-and-r}
|m(\z_0^*,\z_0)| = 2e^{-\sigma(\z_0)}r_0
\end{equation}
from \eqref{eq:M-w_0} and \eqref{eq:2r_0}. Then the bounds for $\|\mathbf{V}_0\|$ and $\|\mathbf{W}_0^\perp\|$ become
\begin{equation} \label{eq:V-Wperp-bounds-alternate}
\begin{aligned}
\|\mathbf{V}_0\| &\le 2r_0^2 \cos^2\!\phi_0 \,e^{-\sigma(\z_0)}(|\Sch\tf(\z_0)|+\frac{1}{2}e^{2\sigma(\z_0)}|K(\tf(\z_0))|),\\
\|\mathbf{W}_0^\perp\| & \le  2r_0\cos^2\!\phi_0\,e^{\sigma(\z_0)}\sqrt{|K(\tf(\z_0)|}.
\end{aligned}
\end{equation}
We also bring in  the radius $a_0$ of the horosphere $H_{\tw_0}$,
\begin{equation} \label{eq:a-0}
a_0 = \frac{1}{2}e^{\sigma(\z_0)}e^{2t_0}(1-|\z_0|^2),
\end{equation}
using \eqref{eq:change -in-radius} and \eqref{eq:horosphere-radius}, and the equation $r_0\cos\phi_0=a_0\sin\phi_0$ from \eqref{eq:sin-phi-vs-cos-phi}. Then the bound for $\|\mathbf{W}_0\|$ is
\begin{equation} \label{eq:W-bound-alternate}
\|\mathbf{W}_0\| \le 4r_0^2\cos^2\!\phi_0\,e^{-2t_0}\frac{\sqrt{|K(\tf(\z_0))|}}{1-|\z_0|^2}.
\end{equation}

We summarize the principal results of this section in a lemma.

\begin{lemma} \label{lemma::components-of-partial-M}
Let $p_0=p(\z_0,t_0) \in H_{\z_0}(t_0)$ and let $\phi_0$ be the angle of elevation of $\E\tf(p_0) \in  H_{\tw_0}=\Mf(H_{\z_0},\z_0)$ measured from $\tw_0$.
Let $2r_0=\|\Mf(\z_0^*,\z_0) - \tw_0\|$.Then
\[
\partial_\xi\Mf(p_0,\z_0) = \mathbf{V}_0+\mathbf{W}_0+\mathbf{W}_0^\perp,
\]
where
$\mathbf{V}_0$ and $\mathbf{W_0}$ are tangent to and $\mathbf{W}_0^\perp$ is normal to  $H_{\tw_0}$ at $\E\tf(p_0)$, with
\begin{equation} \label{eq:V-W-Wperp-bounds-alternate}
\begin{aligned}
\|\mathbf{V}_0\| &\le 2r_0^2 \cos^2\!\phi_0 \,e^{-\sigma(\z_0)}(|\Sch\tf(\z_0)|+\frac{1}{2}e^{2\sigma(\z_0)}|K(\tf(\z_0))|),\\
\|\mathbf{W}_0\| &\le 4r_0^2\cos^2\!\phi_0\,e^{-2t_0}\frac{\sqrt{|K(\tf(\z_0)|}}{1-|\z_0|^2},\\
\|\mathbf{W}_0^\perp\| & \le  2r_0\cos^2\!\phi_0\,e^{\sigma(\z_0)}\sqrt{|K(\tf(\z_0)|}.
\end{aligned}
\end{equation}
\end{lemma}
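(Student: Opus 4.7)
The plan is to organize the computations carried out explicitly in Section \ref{subsection:components-of-partial-M} into three clean stages: identify $\mathbf{V}_0$ as the curvature-free tangential piece, resolve the curvature-driven remainder $\mathbf{w}_0$ into its tangential and normal parts relative to $H_{\tw_0}$, and then bound each piece.

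First, I would start from the hyperbolic stereographic formula \eqref{eq:horosphere-coordinates-image}, differentiate in $\xi$, and evaluate at $\z=\z_0$ using $(\partial_\xi a)_{\z=\z_0}=0$ from \eqref{eq:conformal-factor-stationary-2}. Substituting \eqref{eq;v-and-N} for $\partial_\xi\Mf(\z_0^*,\z_0)$ in the resulting expression \eqref{eq:partial-of-Mf} isolates $\mathbf{V}_0$ of \eqref{eq;V-tan} from the curvature-generated remainder $\mathbf{w}_0$ of \eqref{eq:vector-w_0}. The first-order horosphere congruence from Section \ref{subsection:horosphere-frozen} forces $\mathbf{V}_0$ to be tangent to $H_{\tw_0}$ at $\E\tf(p_0)$, because $\mathbf{V}_0$ is precisely the velocity produced by hyperbolic stereographic projection applied to the tangent vector $\mathbf{v}(\z_0^*,\z_0)$ sitting in the fixed plane $T_{\tw_0}(\Sigma)$. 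Since that projection is conformal with factor $\cos^2\!\phi_0$, the identity $\|\mathbf{V}_0\|=\cos^2\!\phi_0\,\|\mathbf{v}(\z_0^*,\z_0)\|$ combined with \eqref{eq:bound-on-v} and the substitution $|m(\z_0^*,\z_0)|=2e^{-\sigma(\z_0)}r_0$ from \eqref{eq:m-and-r} delivers the first inequality of \eqref{eq:V-W-Wperp-bounds-alternate}.

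Second, to split $\mathbf{w}_0$ I would introduce polar coordinates $(\mathbf{e}_r,\mathbf{e}_\theta)$ in $T_{\tw_0}(\Sigma)$ with origin $\tw_0$, so that $\Mf(\z_0^*,\z_0)-\tw_0=2r_0\,\mathbf{e}_r$. Using the shape-operator identity $(\partial_\xi\mathbf{N})_0=-\alpha_{11}\partial_\xi\tf(\z_0)-\alpha_{12}\partial_\eta\tf(\z_0)$ together with $r_0\cos\phi_0=a_0\sin\phi_0$ from \eqref{eq:sin-phi-vs-cos-phi}, I would first peel off the latitude-tangent component $\langle\mathbf{w}_0,\mathbf{e}_\theta\rangle\mathbf{e}_\theta$. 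The remainder $\mathbf{u}_0$ lies in the $\mathbf{N}_0$-$\mathbf{e}_r$ plane and makes angle $\pi/2-\phi_0$ with the tangent plane to $H_{\tw_0}$ at $\E\tf(p_0)$, so a plane rotation resolves it as a longitude-tangent $\mathbf{u}_1$ of length $\sin\phi_0\cos\phi_0\,|v_n|$ plus a normal $\mathbf{W}_0^\perp$ of length $\cos^2\!\phi_0\,|v_n|$. Declaring $\mathbf{W}_0=\mathbf{u}_1+\langle\mathbf{w}_0,\mathbf{e}_\theta\rangle\mathbf{e}_\theta$, a direct $\|\mathbf{W}_0\|^2$ computation telescopes into a Pythagorean identity in the $\alpha_{ij}$; substituting the minimal-surface relation $\alpha_{11}^2+\alpha_{12}^2=e^{4\sigma}|K|$ from \eqref{eq:alpha-and-K} together with \eqref{eq:m-and-r} and $a_0=\tfrac{1}{2}e^{\sigma(\z_0)}e^{2t_0}(1-|\z_0|^2)$ (to trade $\sin\phi_0/a_0$ for $\cos\phi_0/r_0$) yields the second bound. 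The third bound is immediate from \eqref{eq:bound-on-v_n} and \eqref{eq:m-and-r}.

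The hard part is step two. The single term $2a_0(\sin^2\!\phi_0)(\partial_\xi\mathbf{N})_0$, although it visibly originates in the normal direction $\mathbf{N}_0$, contributes simultaneously to the latitude-tangent, longitude-tangent, and normal directions relative to $H_{\tw_0}$ at $\E\tf(p_0)$. Keeping the three pieces bookkept correctly via the angle $\pi/2-\phi_0$ and the identity $r_0\cos\phi_0=a_0\sin\phi_0$ is the crux, and it is exactly the curvature of $\Sigma$ that prevents the clean ``rotation by $\phi_0$'' picture available in Epstein's planar setting.
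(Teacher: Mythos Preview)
Your proposal is correct and follows essentially the same approach as the paper: the lemma is stated there as a summary of the computations in Section~\ref{subsection:components-of-partial-M}, and you have accurately outlined those computations---the differentiation of \eqref{eq:horosphere-coordinates-image}, the isolation of $\mathbf{V}_0$ via the first-order horosphere congruence and the conformal factor $\cos^2\!\phi_0$, the polar decomposition of $\mathbf{w}_0$ using the angle $\pi/2-\phi_0$, and the final substitutions \eqref{eq:m-and-r}, \eqref{eq:sin-phi-vs-cos-phi}, \eqref{eq:a-0}. The only point on which to be careful is the Weingarten identity for $(\partial_\xi\mathbf{N})_0$, which in a conformal frame carries a factor $e^{-2\sigma}$; the paper's stated form suppresses it but the subsequent inner-product computations are consistent with it, so just make sure your bookkeeping tracks that factor when you write things out in full.
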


\subsection{Estimating the Dilatation of $\E\tf$} \label{subsection:dilatation}
The proof of Theorem \ref{theorem:extension} is completed by deriving the bounds
\begin{equation} \label{eq:dilatation-bound-2}
\frac{1}{k(\rho)} \le \frac{\max_{\|X\| =1} \|D\E\tf(p)X\|}{\min_{\|X\| =1} \|D\E\tf(p)X\|} \le k(\rho),
\end{equation}
We continue with the notation as above, in particular working at the fixed point $p_0 =p(\z_0,t_0)=C_{\z_0}\cap H_{\z_0}(t_0)$ and $\tw_0=\tf(\z_0)$.
We estimate $\| D\E\tf(p_0)(X)\|$, $\|X\|=1$ for $X$ tangent to and normal to $C_{\z_0}$ at $p_0$.

\paragraph*{{\bf Case I}: $X$ tangent to $C_{\z_0}$}

Since $\E\tf$ restricted to $C_{\z_0}$ coincides with $\Mf(p,\z_0)$ we have $D\E\tf(p_0)(X) =D\Mf(p_0,\z_0)(X)$, which is tangent to $C_{\tw_0}$ at $\E\tf(p_0)$. The magnitude of $D\Mf(p_0,\z_0)(X)$ can in turn be expressed as the ratio of the heights of $p_0\in \Hyp$ above $\mathbb{C}$ and of $\E\tf(p_0)=\Mf(p_0,\z_0)\in \Hyp_{\tw_0}(\Sigma)$ above $T_{\tw_0}(\Sigma)$, say
\begin{equation} \label{eq:ratio-of-heights}
\|D\E\tf(p_0)X\| = \frac{\mathpzc{h}(\E\tf(p_0))}{\mathpzc{h}(p_0)}.
\end{equation}

\paragraph*{{\bf Case II}: $X$ normal to $C_{\z_0}$} In this case $X$ is a unit vector tangent to the surface $\mathbb{S}(t_0)$ at $p_0$ and we take it to be in the $\partial_\xi$-direction; recall \eqref{eq:DM} and the accompanying discussion.   Letting
\[
\dl_0= \|\partial_\xi p(\z_0,t_0)\|,
\]
we obtain
\[
\begin{aligned}
D\E\tf(p_0)(X) &=D\Mf(p_0,\z_0)(X)+\frac{1}{\dl_0} \partial_\xi\Mf(p_0,\z_0)\\
&= D\Mf(p_0,\z_0)(X)+\frac{1}{\dl_0}( \mathbf{V}_0+\mathbf{W}_0+\mathbf{W}_0^\perp).
\end{aligned}
\]
Hence
\[
\begin{aligned}
\|D\E\tf(p_0)(X)\| &\le \|D\Mf(p_0,\z_0)(X)\| +\frac{1}{\dl_0}(\|\mathbf{V}_0\|+\|\mathbf{W}_0\|+\|\mathbf{W}_0^\perp\|)\\
&= \frac{\mathpzc{h}(\E\tf(p_0))}{\mathpzc{h}(p_0)}+\frac{1}{\dl_0}(\|\mathbf{V}_0\|+\|\mathbf{W}_0\|+\|\mathbf{W}_0^\perp\|).
\end{aligned}
\]
On the other hand, since $D\Mf(p_0,\z_0)(X)$ is tangent to the image horosphere $H_{\tw_0} =\Mf(H_{\z_0},\z_0)$ we also have
\begin{equation} \label{eq:DEf-lower-bound}
\begin{aligned}
\|D\E\tf(p_0)(X)\| &\ge \|D\Mf(p_0,\z_0)(X)\| -\frac{1}{\dl_0}(\|\mathbf{V}_0\|+\|\mathbf{W}_0\|)\\
&= \frac{\mathpzc{h}(\E\tf(p_0))}{\mathpzc{h}(p_0)}-\frac{1}{\dl_0}(\|\mathbf{V}_0\|+\|\mathbf{W}_0\|).
\end{aligned}
\end{equation}

To deduce \eqref{eq:dilatation-bound-2} we thus want to show two things:
\begin{enumerate}
\item There exists a constant $\kappa_1=\kappa_1(\rho) <1$ such that
\begin{equation} \label{eq:dilatation-lower}
\frac{1}{\dl_0}(\|\mathbf{V_0}\| + \|\mathbf{W}_0\|) \le \kappa_1\frac{\mathpzc{h}(\E\tf(p_0))}{\mathpzc{h}(p_0)}.
\end{equation}
\item There exists a constant $\kappa_2=\kappa_2(\rho)<\infty$ such that
\begin{equation} \label{eq:dilatation-upper}
\frac{1}{\dl_0}\|\mathbf{W}_0^\perp\| \le \kappa_2 \frac{\mathpzc{h}(\E\tf(p_0))}{\mathpzc{h}(p_0)}.
\end{equation}
\end{enumerate}

From the formula \eqref{eq:p(z,t)} for $p(\z,t)$ we calculate
\[
\dl_0=\frac{1+e^{4t_0}}{1+e^{4t_0}|\z_0|^2},
\]
and for the height of $p_0$ above $\mathbb{C}$,
\[
\mathpzc{h}(p_0) = e^{2t_0}\frac{1-|\z_0|^2}{1+e^{4t_0}|\z_0|^2}.
\]
In the image, the height of $\E\tf(p_0)$ above $T_{\tw_0}(\Sigma)$ is
\[
\mathpzc{h}(\E\tf(p_0))=2r_0\sin\phi_0\cos\phi_0,
\]
from \eqref{eq:height-of-p}, where  $2r_0 = \|\Mf(\z_0^*,\z_0)-\tw_0\|$, as in \eqref{eq:2r_0}. We also bring in  the radius $a_0$ of the horosphere $H_{\tw_0}$,
\[
a_0 = \frac{1}{2}e^{\sigma(\z_0)}e^{2t_0}(1-|\z_0|^2),
\]
using \eqref{eq:change -in-radius} and \eqref{eq:horosphere-radius}, and the equation $r_0\cos\phi_0=a_0\sin\phi_0$ from \eqref{eq:sin-phi-vs-cos-phi}. We then obtain
\begin{equation} \label{eq:h-and-delta}
\dl_0\frac{\mathpzc{h}(\E\tf(p_0)}{\mathpzc{h}(p_0)} = 4r_0^2\cos^2\!\phi_0\,e^{-\sigma(\z_0)}\frac{1+e^{-4t_0}}{(1-|\z_0|^2)^2}.
\end{equation}

The estimates in \eqref{eq:V-W-Wperp-bounds-alternate} allow us to express the sought for upper bound \eqref{eq:dilatation-lower} as
\begin{equation} \label{eq:dilatation-lower-alternate}
\begin{aligned}
&|\Sch\tf(\z_0)|+\frac{1}{2}e^{2\sigma(\z_0)}|K(\tf(\z_0))|+2 e^{-2t_0}\,e^{\sigma(\z_0)}\frac{\sqrt{|K(\tf(\z_0))|}}{1-|\z_0|^2} \le \kappa_1 \frac{2(1+e^{-4t_0})}{(1-|\z_0|^2)^2}.\\
\end{aligned}
\end{equation}
Let $A_0$ denote the left-hand side of \eqref{eq:dilatation-lower-alternate}. Since $\tf$ satisfies \eqref{eq:AW-condition},
\begin{equation} \label{eq:A_0-bound}
\begin{aligned}
A_0 &\le \frac{2\rho}{(1-|\z_0|^2)^2} -\frac{1}{2}e^{2\sigma(\z_0)}|K(\tf(\z_0)| + 2 e^{-2t_0}\,e^{\sigma(\z_0)}\frac{\sqrt{|K(\tf(\z_0)|}}{1-|\z_0|^2}\\
&= \frac{2\rho}{(1-|\z_0|^2)^2} -\frac{1}{2}B_0^2+2B_0C_0\\
&= \frac{2\rho}{(1-|\z|_0^2)^2}+2C_0^2-\frac{1}{2}(B_0-2C_0)^2,
\end{aligned}
\end{equation}
where we have put
\[
B_0= e^{\sigma(\z_0)}\sqrt{|K(\tf(\z_0))|}, \quad  C_0=\frac{e^{-2t_0}}{(1-|\z_0|^2)}.
\]

The curvature bound \eqref{eq:curvature-bound} implies
\begin{equation} \label{eq:B_0-bound}
B_0 \le \frac{\sqrt{2\rho}}{1-|\z_0|^2} = e^{2t_0}\sqrt{2\rho}\,C_0.
\end{equation}
Let $\epsilon \in (0,1)$, to be determined, and define $\tau_0$ by
\begin{equation} \label{eq:rho-and-tau_0}
e^{2\tau_0}=\frac{\sqrt{2}\epsilon}{\sqrt{\rho}}.
\end{equation}
Suppose first that $t_0 \le \tau_0$. Then \eqref{eq:B_0-bound} implies $B_0 \le 2 \epsilon C_0$. Hence from \eqref{eq:A_0-bound},
\begin{equation} \label{eq:A_0-bound-2}
A_0\le \frac{2\rho}{(1-|\z_0|^2)^2}+2(1-(1-\epsilon)^2)C_0^2 = \frac{2\rho}{(1-|\z_0|^2)^2} + \frac{2\rho_1e^{-4t_0}}{(1-|\z_0|^2)^2},
\end{equation}
setting
\begin{equation} \label{eq:rho_1}
\rho_1 = 1-(1-\epsilon)^2.
\end{equation}
The number $\rho_1$ increases with $\epsilon\in(0,1)$ and
$0<\rho_1<1$.
Hence \eqref{eq:dilatation-lower-alternate}  holds in this case with $\kappa_1=\max\{\rho,\rho_1\}$.

Suppose next that $\tau_0 \le t_0$. Begin with
\[
A_0 \le \frac{2\rho}{(1-|\z_0|^2)^2}+2C_0^2 = \frac{2\rho}{(1-|\z_0|^2)^2}+\frac{2e^{-4t_0}}{(1-|\z_0|^2)^2} \le \frac{2\rho_2(1+e^{-4t_0})}{(1-|\z_0|^2)^2}
\]
provided $\rho_2$ can be chosen so that $\rho+e^{-4t_0} \le \rho_2(1+e^{-4t_0})$, that is so that
\[
e^{-4t_0} \le \frac{\rho_2-\rho}{1-\rho_2}.
\]
Using \eqref{eq:rho-and-tau_0} and $\tau_0 \le t_0$ we are led to the optimal value
\[
\rho_2=\rho\frac{1+2\epsilon^2}{\rho+2\epsilon^2}.
\]
Here $\rho_2$ is decreasing for $\epsilon\in(0,1)$ and
\[
\frac{3\rho}{2+\rho} < \rho_2 <1.
\]
Finally, we choose $\epsilon$ as the unique solution in $(0,1)$ for which $\rho_1=\rho_2$. This common value defines a constant $\kappa_1$ for which \eqref{eq:dilatation-lower-alternate} holds, and thus proves the estimate \eqref{eq:dilatation-lower}.

\subsection*{Remark} Before continuing, we note that a simple approximation for this common value when $\rho \sim 1$ can be obtained by replacing both curves $\rho_1=\rho_1(\epsilon)$, $\rho_2=\rho_2(\epsilon)$ by straight lines, giving
\[
\kappa_1=\frac{2+\rho}{4-\rho}.
\]
It is of some interest to be more precise. The equation
\[
1-(1-\epsilon)^2 =  \rho\frac{1+2\epsilon^2}{\rho+2\epsilon^2}
\]
leads to
\[
\rho= h(\epsilon) = \frac{2\epsilon^3(2-\epsilon)}{3\epsilon^2-2\epsilon+1}.
\]
The function $h(\epsilon)$ is monotonically increasing for $\epsilon \in (0,1)$ with $h(0)=0$ and $h(1)=1$ and has an inverse $\epsilon = k(\rho)$ with the same properties. Then the constant $\kappa_1$ in \eqref{eq:dilatation-lower-alternate} and \eqref{eq:dilatation-lower} is
\[
\kappa_1 =  1-(1-g(\rho))^2.
\]
Moreover, using $h'(1)=0$, $h''(1)=-3$ we see that for $\epsilon \sim 1$,
\[
h(\epsilon) \sim 1-\frac{3}{2}(1-\epsilon)^2,
\]
hence
\[
\kappa_1 \sim 1 - \frac{2}{3}(1-\rho).
\]
This agrees to first order with the approximation $(2+\rho)/(4-\rho)$ for $\epsilon \sim 1$.

\bigskip

Finally, we prove \eqref{eq:dilatation-upper}, which, with \eqref{eq:V-W-Wperp-bounds-alternate} and \eqref{eq:h-and-delta} amounts to
\[
e^{2\sigma(\z_0)} \sqrt{|K(\tf(\z_0)|} \le \kappa_2 r_0\frac{2(1+e^{-4t_0})}{(1-|\z_0|^2)^2},
\]
for an appropriate $\kappa_2$.  But now from \eqref{eq:C-diameter}, in Section \ref{subsection:reflection} on the reflection across $\partial\Sigma$, we have
\[
2r_0 = \frac{e^{\sigma(\z_0)}}{\|\nabla \log\U\tf(\z_0)\|},
\]
and so the inequality we must show reduces to
\[
e^{\sigma(\z_0)}\sqrt{|K(\tf(\z_0)|}\;\| \nabla \log\U\tf(\z_0) \| \le \kappa_2 \frac{(1+e^{-4t_0})}{(1-|\z_0|^2)^2}
\]
Lemma \ref{lemma:grad-u-upper-bound} provides the bound
\[
\| \nabla \log\U\tf(\z_0)\| \le \frac{\sqrt{2}}{1-|\z_0|^2},
\]
so it suffices to find $\kappa_2$ with
\[
\sqrt{2}\,e^{\sigma(\z_0)}\sqrt{|K(\tf(\z_0)|} \le \frac{\kappa_2}{1-|\z_0|^2},
\]
which by \eqref{eq:curvature-bound} will hold for
\[
\kappa_2 =2\sqrt{\rho}.
\]

\bigskip

The estimates \eqref{eq:dilatation-lower} and \eqref{eq:dilatation-upper} together show that
\[
1-\kappa_1 \le \frac{\max_{\|X\| =1} \|D\E\tf(p)X\|}{\min_{\|X\| =1} \|D\E\tf(p)X\|} \le 1+\kappa_1+\kappa_2,
\]
which proves that $\E\tf$ is quasiconformal in $\overline{\mathbb{R}^3}$ with constant
\[
k(\rho) =\frac{1+\kappa_1+\kappa_2}{1-\kappa_1}.
\]
The proof of Theorem \ref{theorem:extension} is complete.

In the classical case, when $\tf$ is analytic and $\Sigma$ is planar, the result generalizes the classical Ahlfors-Weill theorem to provide an extension to space, with the classical dilatation as well.

\begin{corollary} \label{corollary:general-Ahlfors-Weill}
If $f$ is analytic in $\mathbb{D}$ and
\[
|\Sch f(\z)|\le \frac{2\rho}{(1-|z|^2)^2}, \quad \rho <1,
\]
then $\E f$ is $(1+\rho)/(1-\rho)$-quasiconformal.
\end{corollary}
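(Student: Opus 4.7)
The plan is to specialize the proof of Theorem \ref{theorem:extension} to the analytic case and track the constants. When $f$ is analytic, the surface $\Sigma = f(\mathbb{D})$ is a planar domain contained in $\mathbb{C}$, so $\tf = f$ and the Gaussian curvature $K$ vanishes identically on $\Sigma$. In particular, the hypothesis on $|\Sch f|$ together with $K \equiv 0$ is exactly \eqref{eq:AW-condition} with the given $\rho < 1$, so Theorem \ref{theorem:extension} applies and yields some quasiconformal extension $\E f$ of $f$ to $\overline{\R}$. The task is only to show that in this special case the dilatation constant $k(\rho)$ produced by the proof collapses to the classical value $(1+\rho)/(1-\rho)$.

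To this end, I would revisit the estimates of Section \ref{subsection:dilatation}. With $K(\tf(\z_0)) = 0$ everywhere, the second and third bounds in \eqref{eq:V-W-Wperp-bounds-alternate} give $\|\mathbf{W}_0\| = 0$ and $\|\mathbf{W}_0^\perp\| = 0$. Only the $\|\mathbf{V}_0\|$ term survives in the estimate of $\partial_\xi\Mf(p_0,\z_0)$, and the derivative $D\E f(p_0)(X)$ for $X$ normal to $C_{\z_0}$ lies entirely in the tangent plane to the image horosphere $H_{\tw_0}$ (which is now just an ordinary horosphere in $\mathbb{H}^3$ over the planar domain $\Sigma$). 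The inequality \eqref{eq:dilatation-lower-alternate} reduces to
\[
|\Sch f(\z_0)| \le \kappa_1 \,\frac{2(1+e^{-4t_0})}{(1-|\z_0|^2)^2},
\]
which, given the hypothesis $|\Sch f(\z_0)| \le 2\rho/(1-|\z_0|^2)^2$ and the trivial bound $1 + e^{-4t_0} \ge 1$, holds with the clean choice $\kappa_1 = \rho$. The inequality \eqref{eq:dilatation-upper} is vacuous in the planar case and gives $\kappa_2 = 0$.

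Plugging these into the final dilatation estimate
\[
1-\kappa_1 \le \frac{\max_{\|X\|=1}\|D\E f(p)X\|}{\min_{\|X\|=1}\|D\E f(p)X\|} \le 1+\kappa_1+\kappa_2
\]
yields the bound $k(\rho) = (1+\kappa_1+\kappa_2)/(1-\kappa_1) = (1+\rho)/(1-\rho)$, as asserted. There is no real obstacle here: the work is just bookkeeping to confirm that with $K \equiv 0$ the curvature-driven error terms that forced $\kappa_1, \kappa_2$ to be computed via the optimization in $\epsilon$ (balancing $\rho_1$ and $\rho_2$) simply disappear, and the two ``difficult'' terms $\mathbf{W}_0$ and $\mathbf{W}_0^\perp$ that account for the deviation of $\Sigma(t)$ from being the envelope of $H_{\tw}(t)$ vanish outright. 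One may also remark that, in the analytic case, formulas \eqref{eq:m-analytic}--\eqref{eq:Mf-analytic} show $\E f$ restricted to $\mathbb{C}$ coincides with the classical Ahlfors-Weill extension, so this corollary faithfully recovers their original theorem.
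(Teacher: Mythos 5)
Your proof is correct and follows essentially the same route as the paper: specialize to $K\equiv 0$, observe that the curvature-driven bounds on $\mathbf{W}_0$ and $\mathbf{W}_0^\perp$ in \eqref{eq:V-W-Wperp-bounds-alternate} vanish, read off $\kappa_1=\rho$ from \eqref{eq:dilatation-lower-alternate} and $\kappa_2=0$ from \eqref{eq:dilatation-upper}, and substitute into $k(\rho)=(1+\kappa_1+\kappa_2)/(1-\kappa_1)$. The paper states this more tersely, but the argument is identical.
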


\begin{proof} In this case the curvature is zero. We see from \eqref{eq:dilatation-lower-alternate} that we may take $\kappa_1=\rho$ and from \eqref{eq:V-W-Wperp-bounds-alternate} and \eqref{eq:dilatation-upper} that we may take $\kappa_2=0$.
\end{proof}

\subsection{Quasiconformality of the Reflection $\RR$} \label{subsection:reflection-qc}

Recall from Section \ref{subsection:reflection} the reflection $\RR\colon \Sigma \to \Sigma^*$, defined via the circle bundle $\mathfrak{C}(\Sigma)$ on $\Sigma$ by setting $\RR(\tw) = \tw^*$, where for the circle $C_{\tw}$ we have $C_{\tw} \cap T_{\tw}(\Sigma) = \{\tw,\tw^*\}$. Under $\RR$ the unique critical point of $\U\tf$
is mapped to the point at infinity while the rest of $\Sigma$ is mapped to the surface $\Sigma^*$, and  $\overline{\Sigma} \cup \Sigma^* \cup \{\infty\}$ is a topological sphere.  A limiting case of the preceding estimates allows us to deduce that $\RR$ is quasiconformal.

We must find bounds for $\|D\E\tf(p)X\|$ when $p\in \Sigma$ and $X$ is tangent to $\Sigma$ at $p$. This is a limit of the estimates in Case II, above, as $t_0 \rightarrow -\infty$. For $\kappa_1$, following the analysis when $t_0 \le \tau_0$ starting from \eqref{eq:rho-and-tau_0}, we may let $\tau_0 \rightarrow -\infty$ as well, whence $\epsilon \rightarrow 0$, $\rho_1 \rightarrow 0$ and we can take
\begin{equation} \label{eq:kappa_1=rho}
\kappa_1 = \rho.
\end{equation}
The value of $\kappa_2$ is as before, namely $\kappa_2 = 2\sqrt{\rho}$. Thus

\begin{corollary} \label{corollary:R-quasiconformal}
If $\tf$ satisfies \eqref{eq:AW-condition} with $\rho<1$ then the reflection $\RR$ is $k(\rho)$-quasiconformal with
\[
k(\rho) = \frac{1+\rho +2\sqrt{\rho}}{1-\rho} = \frac{(1+\sqrt{\rho})^2}{1-\rho}.
\]
\end{corollary}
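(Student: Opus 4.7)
The plan is to extract the quasiconformality of $\RR$ as a limiting case of the dilatation estimates for $\E\tf$ developed in Section~\ref{subsection:dilatation}, specifically from Case~II, by letting $t_0 \to -\infty$. In this regime the envelope surface $\mathbb{S}(t_0)$ degenerates onto $\mathbb{D}$, its image $\Sigma(t_0) = \E\tf(\mathbb{S}(t_0))$ degenerates onto $\Sigma$, and the Case~II bound $\|D\E\tf(p_0)X\| \le (1+\kappa_1+\kappa_2)\,\mathpzc{h}(\E\tf(p_0))/\mathpzc{h}(p_0)$ (together with its lower counterpart) will pass to a bound on the tangential dilatation of $\RR$ at $\tw_0 = \tf(\zeta_0) \in \Sigma$.

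First I would revisit the two-case analysis that produced $\kappa_1$ in \eqref{eq:A_0-bound-2}. In the case $t_0 \le \tau_0$ we obtained $A_0 \le 2\rho/(1-|\zeta_0|^2)^2 + 2\rho_1 e^{-4t_0}/(1-|\zeta_0|^2)^2$, where $\rho_1 = 1-(1-\epsilon)^2$ and $\epsilon$ is tied to $\tau_0$ via \eqref{eq:rho-and-tau_0}. Since I am taking $t_0 \to -\infty$, I may let $\tau_0 \to -\infty$ jointly with $t_0$, forcing $\epsilon \to 0$ and $\rho_1 \to 0$; inserting this back into \eqref{eq:dilatation-lower-alternate} leaves only the requirement $\kappa_1 \ge \rho$, so in the limit I may take $\kappa_1 = \rho$. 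The constant $\kappa_2$, derived from the curvature bound \eqref{eq:curvature-bound} together with Lemma~\ref{lemma:grad-u-upper-bound}, carries no dependence on $t_0$ and therefore retains its value $\kappa_2 = 2\sqrt{\rho}$.

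Combining the limiting $\kappa_1$ with the unchanged $\kappa_2$ in the general quasiconformality estimate $(1+\kappa_1+\kappa_2)/(1-\kappa_1)$ produces
\[
k(\rho) = \frac{1+\rho+2\sqrt{\rho}}{1-\rho} = \frac{(1+\sqrt{\rho})^2}{1-\rho},
\]
which is precisely the constant claimed for $\RR$.

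The main point requiring care, and the only place where anything new intervenes, is the identification of this limit with the intrinsic dilatation of $\RR\colon \Sigma \to \Sigma^*$. Using the representation $\RR = \E\tf \circ \iota \circ \tf^{-1}$ with $\iota(\zeta) = 1/\bar{\zeta}$, and the fact that $\tf$ and $\iota$ are (anti)conformal and so contribute no dilatation, the dilatation of $\RR$ at $\tw_0$ agrees with the tangential dilatation of the restriction of $\E\tf$ to $\mathbb{C}$ at the corresponding point---and this is exactly what the $t_0 \to -\infty$ limit of the Case~II estimates controls. No new computations are required beyond those already assembled in Sections~\ref{section:BMA-2} and~\ref{subsection:dilatation}.
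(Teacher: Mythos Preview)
Your proposal is correct and follows essentially the same route as the paper: both treat the quasiconformality of $\RR$ as the limiting case $t_0 \to -\infty$ of the Case~II estimates in Section~\ref{subsection:dilatation}, let $\tau_0 \to -\infty$ so that $\epsilon \to 0$ and $\rho_1 \to 0$ to obtain $\kappa_1 = \rho$, retain $\kappa_2 = 2\sqrt{\rho}$, and combine these into $k(\rho) = (1+\sqrt{\rho})^2/(1-\rho)$. Your added factorization $\RR = \E\tf \circ \iota \circ \tf^{-1}$ to tie the limit to the dilatation of $\RR$ is a detail the paper leaves implicit.
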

As before, when $\tf$ is analytic and $\Sigma$ is planar the estimates involving the curvature and the second fundamental form do not enter. In that case the bound reduces to the classical $(1+\rho)/(1-\rho)$.

\subsection*{Remark} It is possible to show directly that $\RR$ is quasiconformal using the formula
\begin{equation*} \label{eq:reflection-intrinsic-2}
\RR(\tw) = \tw + 2J(\nabla \log\lambda_\Sigma(\tw)), \quad J(p) = p/\|p\|^2.
\end{equation*}
Very briefly, we can regard $\tw \mapsto R(\tw)$ as a vector field along $\Sigma$ (
not tangent to $\Sigma$) and then compute its covariant derivative $\nabla_X\RR$ in the direction of a vector $X$, $\|X\|=1$, tangent to $\Sigma$. Here $\nabla_X\RR$ is the Euclidean covariant derivative on $\mathbb{R}^3$. 

In terms of the function $\lambda_\Sigma$ on $\Sigma$, \eqref{eq:lambda-Sigma}, and the gradient $\Lambda = \|\nabla\log \lambda_\Sigma\|$ one can show
\[
\frac{4\lambda_\Sigma^2}{\Lambda^2}(1-\rho) \le \| \nabla_X\RR\| \le \frac{4\lambda_\Sigma^2}{\Lambda^2}(1+\sqrt{\rho})^2,
\]
and the corollary follows. The derivation  is interesting, but it requires more preparation.

\section{Quasiconformal Extension of Planar Harmonic mappings} \label{section:planar-extension}

In this section we consider the problem of injectivity and quasiconformal extension for the planar harmonic mapping $f = h + \bar{g}$ under the assumption that its lift $\tf$ satisfies \eqref{eq:AW-condition}. Our method is simply to project from $\overline{\Sigma}\cup\Sigma^*$ to the plane, and the reward is the similarity of the resulting extension of the planar map to the classical Ahlfors-Weill formula applied separately to $h$ and $\bar{g}$.

However, \eqref{eq:AW-condition} alone is not enough. In fact, we are in a situation reminiscent of the original Ahlfors-Weill proof, where we need to know first that the projection is locally injective --  geometrically that $\overline{\Sigma}\cup\Sigma^*$ is locally a graph.
If we assume that $f$ is  locally injective, sense-preserving, and that its dilatation $\omega$ is the square of an analytic function, so $|\omega(\z)| <1$, then at least the surface $\Sigma$ is locally a graph; see \cite {duren:harmonic}. It may exhibit several sheets if $f$ is not injective, and the analysis in Lemma \ref{lemma:Sigma^*-locally-a-graph} below suggests that without a stronger assumption on the dilatation the reflected surface $\Sigma^*$ need not be locally a graph. To address the latter we have the following result.
\begin{lemma} \label{lemma:Sigma^*-locally-a-graph}
Suppose that $f=h+\bar{g}$ is locally injective with dilatation $\omega$ the square of an analytic function, and that $\tf$ satisfies \eqref{eq:AW-condition} for a $\rho <1$. If $\omega$ satisfies
\begin{equation} \label{eq:omega-bounds}
\sup_{\z\in \D}\sqrt{|\omega(\z)|} < \frac{1-\sqrt{\rho}}{1+\sqrt{\rho}}, \quad \z\in \mathbb{D},
\end{equation}
then $\Sigma^*$ is locally a graph.
\end{lemma}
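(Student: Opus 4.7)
The plan is to parametrize the reflected surface and reduce the graph property to the local injectivity of a map of the disk to the plane. For $\zeta\in\D$, set $\Psi(\zeta)=\RR(\tf(\zeta))=\E\tf(\zeta^*)$ with $\zeta^*=1/\bar\zeta$. Then $\Sigma^*$ is locally a graph over $\mathbb{C}$ precisely when the planar projection $P(\zeta)=\pi\circ\Psi(\zeta)$ is a local diffeomorphism from $\D$ to $\mathbb{C}$, equivalently when its Jacobian $|P_\zeta|^2-|P_{\bar\zeta}|^2$ never vanishes. Starting from \eqref{eq:reflection-BMA} together with \eqref{eq:M-formula}, and using that $\pi\partial_\xi\tf=h'+\overline{g'}$, $\pi\partial_\eta\tf=i(h'-\overline{g'})$, one gets the explicit formula
\[
P(\zeta)=f(\zeta)+m(\zeta^*,\zeta)h'(\zeta)+\overline{m(\zeta^*,\zeta)}\,\overline{g'(\zeta)},
\]
with $m(\zeta^*,\zeta)=u/D_1$, $u=1-|\zeta|^2$, $D_1=\bar\zeta-u\partial_z\sigma$. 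This is precisely the formula asserted in Theorem~\ref{theorem:planar-extension}.

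The main computation is to differentiate $P$. Writing $e^\sigma=|h'|(1+|q|^2)$, one has $\partial_z\sigma=\tfrac12 h''/h'+q'\bar q/(1+|q|^2)$. Combining this with the identities $\partial_{zz}\sigma-(\partial_z\sigma)^2=\tfrac12\Sch\tf$ and $\partial_{z\bar z}\sigma=-\tfrac14 Ke^{2\sigma}$ (the latter from minimality), the leading $h'$ and $\overline{g'}$ contributions in $P_\zeta,P_{\bar\zeta}$ cancel and leave
\[
P_\zeta=\frac{u^2 h'\Sch\tf}{2D_1^2}-\frac{\overline{g'}(1+u^2Ke^{2\sigma}/4)}{\bar D_1^2}-\frac{2uh'q'\bar q}{(1+|q|^2)D_1},
\]
\[
P_{\bar\zeta}=-\frac{h'(1+u^2Ke^{2\sigma}/4)}{D_1^2}+\frac{u^2\overline{g'}\,\overline{\Sch\tf}}{2\bar D_1^2}+\frac{2u\overline{h'}\bar q\bar q'}{(1+|q|^2)\bar D_1}.
\]
The dominant terms are $-h'/D_1^2$ in $P_{\bar\zeta}$ and $-\overline{g'}/\bar D_1^2$ in $P_\zeta$; since $|g'|=|q|^2|h'|$, the ratio of their moduli is $|q|^2$, safely less than $1$ by hypothesis.

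To turn this heuristic into the strict inequality $|P_\zeta|<|P_{\bar\zeta}|$ one bounds the remaining terms. The AW condition \eqref{eq:AW-condition} gives $u^2|\Sch\tf|/2\le\rho-u^2 e^{2\sigma}|K|/2$ and, since $K\le 0$ on the minimal surface, $|1+u^2Ke^{2\sigma}/4|\ge 1-\rho/2$. Applying Schwarz--Pick to $q:\D\to k_0\overline{\D}$ (where $k_0=(1-\sqrt\rho)/(1+\sqrt\rho)$) yields $u|q'|\le k_0-|q|^2/k_0$, and Lemma~\ref{lemma:grad-u-upper-bound} provides $|D_1|\le\sqrt{2}$. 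Feeding these into the triangle inequality reduces the desired $|P_{\bar\zeta}|>|P_\zeta|$ to the pointwise bound
\[
(1-|q|^2)\bigl(1-u^2e^{2\sigma}|K|/4\bigr)>\frac{4u|q||q'|\,|D_1|}{1+|q|^2}+(1+|q|^2)\,\frac{u^2|\Sch\tf|}{2}.
\]
The precise form of the hypothesis arises geometrically: writing $|q|=\tan(\theta_1/2)$ for the tilt of $\Sigma$ (since $|\omega|=|q|^2$ is the Beltrami modulus of $\pi|_\Sigma$) and $\sqrt\rho=\tan(\theta_2/2)$ for the angular contribution of the reflection (whose complex dilatation has modulus $\le\sqrt\rho$ by Corollary~\ref{corollary:R-quasiconformal}), the condition $|q|<k_0$ is exactly $\theta_1+\theta_2<\pi/2$ via the tangent half-angle addition formula $(|q|+\sqrt\rho)/(1-|q|\sqrt\rho)<1$, keeping the tangent plane of $\Sigma^*$ transverse to the vertical direction. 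The main obstacle is the bookkeeping: the constants in the Schwarz--Pick, $|D_1|$, and AW bounds must reconcile so that the stated threshold is precisely what the inequality requires. Once the inequality is verified pointwise, $P$ is locally injective (in fact sense-reversing, matching the classical Ahlfors--Weill case obtained when $g\equiv 0$), and $\Sigma^*$ is therefore locally a graph.
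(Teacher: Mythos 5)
Your proposal sets out along a genuinely different route from the paper, but it does not finish, and the closing appeal to geometry does not fill the gap.

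The paper's proof is entirely geometric: it computes the angle of inclination $\vartheta$ of $T_{\tw_0}(\Sigma)$ via $\tan\vartheta=2\sqrt{|\omega|}/(1-|\omega|)$, observes that the inclination of $T_{\tw_0^*}(\Sigma^*)$ is $\vartheta+\arctan\bigl(\|(D\E\tf X)^\perp\|/\|(D\E\tf X)^\top\|\bigr)$, and bounds the new ratio by $2\sqrt\rho/(1-\rho)$ using the $t_0\to-\infty$ limiting case of the estimates \eqref{eq:dilatation-lower}--\eqref{eq:dilatation-upper} with $\kappa_1=\rho,\ \kappa_2=2\sqrt\rho$. Requiring the total inclination to stay below $\pi/2$ gives precisely the threshold $\sqrt{|\omega|}<(1-\sqrt\rho)/(1+\sqrt\rho)$. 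You instead propose to compute $P_\zeta,P_{\bar\zeta}$ directly from the explicit formula for the extension and verify $|P_{\bar\zeta}|>|P_\zeta|$ pointwise. That is a legitimate alternative strategy, but you stop at exactly the point where the work is: you write down the pointwise inequality
\[
(1-|q|^2)\bigl(1-\tfrac{1}{4}u^2e^{2\sigma}|K|\bigr)>\frac{4u|q||q'|\,|D_1|}{1+|q|^2}+(1+|q|^2)\,\frac{u^2|\Sch\tf|}{2}
\]
and then declare that ``the main obstacle is the bookkeeping'' and that the constants ``must reconcile,'' which is conceding that the inequality has not been proved. Verifying it is not routine: it requires simultaneously using the Schwarz--Pick bound $u|q'|\le (k_0^2-|q|^2)/k_0$, the curvature identity $e^{2\sigma}|K|=4|q'|^2/(1+|q|^2)^2$ (which couples the $K$ and $q'$ terms---bounding them independently loses information), the Schwarzian bound from \eqref{eq:AW-condition}, and the bound $|D_1|\le\sqrt2$ from Lemma~\ref{lemma:grad-u-upper-bound}, and one has to show the resulting quadratic-in-$u|q'|$ inequality holds for all $|q|<k_0$. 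Nothing in your write-up establishes this.

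The geometric paragraph you append does not repair the hole. Setting $|q|=\tan(\theta_1/2)$ correctly recovers $\tan\vartheta=2|q|/(1-|q|^2)$, but you then take $\theta_2$ from the statement that ``the complex dilatation of $\RR$ has modulus $\le\sqrt\rho$ by Corollary~\ref{corollary:R-quasiconformal}.'' That is not the relevant quantity. The complex dilatation of $\RR$ measures the eccentricity of images of small circles in $\Sigma^*$; the angle by which $T_{\tw_0^*}(\Sigma^*)$ tilts further than $T_{\tw_0}(\Sigma)$ is governed by the \emph{normal-to-tangential ratio} $\|(D\E\tf X)^\perp\|/\|(D\E\tf X)^\top\|$, bounded by $\kappa_2/(1-\kappa_1)=2\sqrt\rho/(1-\rho)$ via the tangential/normal decomposition in Lemma~\ref{lemma::components-of-partial-M} and its limiting case. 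Those are different objects with the same numerical value here only because the paper derives both from the same $\kappa_1,\kappa_2$ bounds; you would need to invoke the decomposition itself, not the derived dilatation. As written, then, the proof has two gaps: the analytic inequality is unverified, and the geometric shortcut cites the wrong ingredient.
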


\begin{proof}
Fix a point $\tw_0 = \tf(\z_0)$ on $\Sigma$.  Let $\vartheta$ be the angle of inclination with respect to the vertical of the tangent plane $T_{\tw_0}(\Sigma)$. From the formulas for the components of $\tf$, i.e., the formulas for the Weierstrass-Enneper lift, see \cite{duren:harmonic}, one can show that
\begin{equation} \label{eq:angle-of-inclination}
\tan \vartheta = \frac{2\sqrt{|\omega(\z_0)|}}{1-|\omega(\z_0)|}.
\end{equation}
Now let $X$ be a unit tangent vector to $\Sigma$ at $\tw_0$ and let $(D\E\tf(\z_0)(X))^\top$ and $(D\E\tf(\z_0)(X))^\perp$be respectively the tangential and normal components of $D\E\tf(\z_0)(X)$ Then the angle of inclination of the tangent plane $T_{\tw_0^*}(\Sigma^*)$ to $\Sigma^*$ at $\tw_0^*=\RR(\tw_0)$ is
\[
\vartheta+\tan^{-1}\frac{\|(D\E\tf(\z_0)(X))^\perp\|}{\|(D\E\tf(\z_0)(X))^\top\|}.
\]
The surface $\Sigma^*$ will be locally a graph at $\tw_0^*$ if this angle is $<\pi/2$, and using \eqref{eq:angle-of-inclination} this condition can be written
\begin{equation} \label{eq:local-graph-condition}
 \frac{2\sqrt{|\omega(\z_0)|}}{1-|\omega(\z_0)|}\,\frac{\|(D\E\tf(\z_0)(X))^\perp\|}{\|(D\E\tf(\z_0)(X))^\top\|}<1.
 \end{equation}

We can use limiting cases of previous estimates for $\|D\E\tf\|$ to bound the ratio, namely \eqref{eq:DEf-lower-bound}, \eqref{eq:dilatation-lower} and \eqref{eq:dilatation-upper}, with $\kappa_1=\rho$, from \eqref{eq:kappa_1=rho}, and $\kappa_2=2\sqrt {\rho}$. This results in
\[
 \frac{2\sqrt{|\omega|}}{1-|\omega|}\,\frac{\|(D\E\tf(\z_0)(X))^\perp\|}{\|(D\E\tf(\z_0)(X))^\top\|} \le 4\frac{\sqrt{|\omega(\z_0)|}}{1-\omega(\z_0)}\,\frac{\sqrt{\rho}}{1-\rho}.
 \]
 The right-hand side will be $<1$ precisely when
\[
\sqrt{|\omega(\z_0)|} < \frac{1-\sqrt{\rho}}{1+\sqrt{\rho}}.
\]
\end{proof}

We restate Theorem \ref{theorem:planar-extension} from the introduction, and proceed with the proof.
\begin{thm} \label{theorem:planar-extension-copy}
If $f= h+\bar{g}$ is a locally injective harmonic mapping of $\D$ whose lift $\tf$ satisfies \eqref{eq:AW-condition} for a $\rho<1$ and whose dilatation $\omega$ satisfies \eqref{eq:omega-bounds}, then $f$ is injective and has a quasiconformal extension to $\mathbb{C}$ given by
\begin{equation*} \label{eq:extension-planar-2}
F(\z)=
\begin{cases}
&\hspace{-.15in}f(\z), \; \z\in\overline{\D} \\
&\hspace{-.15in}f(\z^*)+\displaystyle{\frac{(1-|\z^*|^2)h'(\z^*)}{\bar{\z^*}-(1-|\z^*|^2)\partial_z\sigma(\z^*)}+\frac{(1-|\z^*|^2)\overline{g'(\z^*)}}{\z^*-(1-|\z^*|^2)\partial_{\bar{z}}\sigma(\z^*)}}\, , \; \z^*=\frac{1}{\bar{\z}},\;,\z \notin\D.
\end{cases}
\end{equation*}
\end{thm}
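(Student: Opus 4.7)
My strategy is to recognize the given piecewise formula as exactly the vertical projection of the spatial extension. Writing $\pi\colon \overline{\R}\to \overline{\C}$ for projection onto the first two coordinates, I would show
\[
F(\z) = \pi\bigl(\E\tf(\z)\bigr), \qquad \z\in \overline{\C}.
\]
For $\z\in\overline{\D}$ this is immediate since $\E\tf(\z)=\tf(\z)$ projects to $f(\z)$. For $\z\notin\D$, the point $\z$ lies on the circle $C_{\z^*}$ with $\z^*=1/\bar{\z}$, so $\E\tf(\z)=\Mf(\z,\z^*)$. Using the formula \eqref{eq:M-formula} with base point $\z^*$, together with the special value of $m$ in \eqref{eq:m-values} at the antipodal point, and recognizing that $\partial_\xi f = h'+\overline{g'}$ and $\partial_\eta f = i(h'-\overline{g'})$ in the plane, a direct computation gives
\[
\pi\bigl(\Mf(\z,\z^*)\bigr) = f(\z^*) + m(\z,\z^*)\, h'(\z^*) + \overline{m(\z,\z^*)}\,\overline{g'(\z^*)},
\]
which is precisely the stated formula after substituting $m(\z,\z^*)=(1-|\z^*|^2)/(\overline{\z^*}-(1-|\z^*|^2)\partial_z\sigma(\z^*))$.

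Next I would prove that $F$ is a homeomorphism of $\overline{\C}$. Since $\E\tf$ is a homeomorphism of $\overline{\R}$ by Theorem~\ref{theorem:extension}, $\E\tf|_{\overline{\C}}$ is a homeomorphism onto the topological sphere $S=\overline{\Sigma}\cup\Sigma^*$ (appropriately compactified), and the problem reduces to showing that $\pi|_S\colon S\to \overline{\C}$ is a homeomorphism. Local injectivity of $\pi|_S$ at points of $\Sigma$ comes from the assumption $|\omega|<1$, which makes $\Sigma$ a local graph over $\C$ (see \cite{duren:harmonic}); local injectivity at points of $\Sigma^*$ is exactly Lemma~\ref{lemma:Sigma^*-locally-a-graph}; local injectivity at points of $\partial\Sigma$ follows by combining these two graph statements with the fact that $\E\tf$ is a local homeomorphism across $\partial\D$. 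Thus $\pi|_S$ is a local homeomorphism between compact topological spheres; as an open and closed map into the connected space $\overline{\C}$ it is surjective, and simple connectivity of $\overline{\C}$ together with covering space theory then forces $\pi|_S$ to be a global homeomorphism. Consequently $F$ is a homeomorphism of $\overline{\C}$, and in particular $f=F|_{\overline{\D}}$ is injective.

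Finally, for quasiconformality I would note that \eqref{eq:omega-bounds} is a \emph{strict} supremum bound, so the angle of inclination $\vartheta$ of $T_\tw(\Sigma)$ from horizontal is uniformly bounded away from $\pi/2$ on $\Sigma$ (via \eqref{eq:angle-of-inclination}), and the corresponding angle for $T_{\tw^*}(\Sigma^*)$ is uniformly bounded away from $\pi/2$ by the quantitative estimate in the proof of Lemma~\ref{lemma:Sigma^*-locally-a-graph}. Hence $\pi|_S$ is locally bi-Lipschitz with uniformly controlled constants. Composing the planar-QC map $\E\tf|_{\overline{\C}}$ (QC onto $S$ in the intrinsic surface sense) with the bi-Lipschitz $\pi|_S$ produces a planar QC self-homeomorphism of $\overline{\C}$. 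Equivalently, one can verify the dilatation directly: on $\D$ one has $\mu_F=\overline{\omega(\z)}$, bounded by the hypothesis; outside $\D$ a computation from the explicit formula, using Lemma~\ref{lemma:grad-u-upper-bound} to control $\partial_z\sigma$, yields a bound of the same flavor.

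The main obstacle I anticipate is the local-graph verification at points of $\partial\Sigma$, where the two sheets $\Sigma$ and $\Sigma^*$ meet. Separate graph statements on each side do not by themselves preclude interference at the join, and one must invoke both hypotheses on $\omega$ (the condition $|\omega|<1$ for $\Sigma$ and the sharper \eqref{eq:omega-bounds} for $\Sigma^*$) together with the fact that $\E\tf$ continuously and injectively sews the sheets across $\partial\D$, to ensure that the projection remains locally injective on an open neighborhood of $\partial\Sigma$ in $S$.
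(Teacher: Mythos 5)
Your proposal follows essentially the same route as the paper: project the spatial extension $\E\tf$ vertically, use the local-graph statements for $\Sigma$ (from $|\omega|<1$) and $\Sigma^*$ (from Lemma~\ref{lemma:Sigma^*-locally-a-graph}) together with a monodromy/covering-space argument to get global injectivity, then combine the uniform bound on the inclination angles of $\Sigma$ and $\Sigma^*$ with the quasiconformality of $\RR$ to conclude $F$ is quasiconformal, and finally verify the explicit formula via the Weierstrass--Enneper expressions for $\partial_\xi\tf, \partial_\eta\tf$ and the special value of $m$ from \eqref{eq:m-values}. The one place you flag as delicate --- local injectivity of the projection on a neighborhood of $\partial\Sigma$, where the two sheets are sewn --- is in fact handled tersely by the paper as well (it simply asserts that $\overline{\Sigma}\cup\Sigma^*$ is locally a graph); the operative point is that the \emph{uniform} angle bounds away from $\pi/2$ on both sheets, together with the fact that $\RR$ fixes $\partial\Sigma$ pointwise and places $\Sigma^*$ on the opposite side of the tangent plane $T_{\tw}(\Sigma)$ at each boundary point, preclude the projections of the two sheets from overlapping near the seam, so your instinct to worry about this is correct but the resolution uses exactly the quantitative estimate you already invoke.
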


\begin{proof}
Without loss of generality we can assume that the unique critical point of $\U\tf$ is the origin. Let $\Pi\colon \mathbb{R}^3 \to \mathbb{C}$ be the projection $\Pi(x_1,x_2,x_3) = x_1+ix_2$. We know that $\overline{\Sigma}\cup \Sigma^*$ is locally a graph over $\mathbb{C}$, and hence the mapping
\[
F(\z)=
\begin{cases}
f(\z),& \quad \z \in \overline{\D},\\
(\Pi \circ \RR)(\tf(\z^*)), & \quad \z \not\in\D
\end{cases}
\]
is locally injective. Note that $F = \Pi \circ \E\tf$ restricted to $\mathbb{C}$.

 Locating the critical point of $\U\tf$ at the origin implies that $F(z) \rightarrow \infty$ as $|z|\rightarrow \infty$. By the monodromy theorem we conclude that $F$ is a homeomorphism of $\mathbb{C}$. In particular, the underlying harmonic mapping $f$ is injective. Moreover, the assumption on $\omega$ implies that the inclinations of both $\Sigma$ and $\Sigma^*$ are bounded away from $\pi/2$, making the projection $\Pi$ quasiconformal. Since by Corollary \ref{corollary:R-quasiconformal} the reflection $\RR$ is quasiconformal, so is $F$.

Let us verify that $F$ has the stated form. From the Weierstrass-Enneper formulas (see again \cite{duren:harmonic}),
 \[
 \begin{aligned}
 \partial_\xi\tf &= (\Re\{h'+g'\},\Im\{h'-g'\},2\Im \{h'\sqrt{\omega}\}),\\
 \partial_\eta \tf& = (-\Im\{h'+g'\},\Re\{h'-g'\}, -2\Re\{h'\sqrt{\omega}\}),
 \end{aligned}
 \]
 from which
 \[
 \Pi(\partial_\xi\tf) = h'+\bar{g}', \quad \Pi(\partial_\eta \tf)= i(h'-\bar{g}').
 \]
 Now recall that the reflection $\RR$ is given in terms of the best M\"obius approximation,  from \eqref{eq:reflection-BMA}, and when $\z $ is outside $\overline{\D}$ we want the projection of
 \[
\tf(\z^*) + \Re\{m(\z^*,\z)\}\partial_\xi(\z^*)+\Im\{m(\z^*,\z)\} \partial_\eta\tf(\z^*).
\]
This is
\[
\begin{split}
f(\z^*) + \Re\{m(\z^*,\z)(h'(\z^*)+&\bar{g}'(\z^*))+i\Im\{m(\z^*,\z)\}(h'(\z^*)-\bar{g}'(\z^*))\\
& = f(\z^*)+m(\z^*,\z)h'+\overline{m(\z^*,\z)}\,\bar{g}'(\z^*),
\end{split}
\]
which is exactly the formula for $F(\z)$.
\end{proof}

We can also conclude that $\overline{\Sigma}\cup\Sigma^*$ is a graph over $\mathbb{C}$.

\bibliographystyle{amsplain}

\providecommand{\bysame}{\leavevmode\hbox to3em{\hrulefill}\thinspace}
\providecommand{\MR}{\relax\ifhmode\unskip\space\fi MR }
\providecommand{\MRhref}[2]{%
  \href{http://www.ams.org/mathscinet-getitem?mr=#1}{#2}
}
\providecommand{\href}[2]{#2}

\end{document}